\sloppy\allowdisplaybreaks[4]
  \def\hA{\widehat{A}} \def\cA{{\cal A}}  \def\bu{\bar{u}}
 \def\sB{\mathscr{B}} \def\hB{\widehat{B}}   \def\hv{\hat{v}}
  \def\hC{\widehat{C}} \def\cC{{\cal C}}  \def\bA{\bar{A}}
  \def\hD{\widehat{D}}   \def\bB{\bar{B}}
\def\dbE{\mathbb{E}}   \def\cE{{\cal E}}  \def\bC{\bar{C}}
\def\dbF{\mathbb{F}} \def\sF{\mathscr{F}}    \def\bD{\bar{D}}
     \def\bQ{\bar{Q}}
\def\dbH{\mathbb{H}}   \def\cH{{\cal H}}  \def\bS{\bar{S}}
     \def\bR{\bar{R}}
     \def\bX{\bar{X}}
   \def\cL{{\cal L}}
\def\dbP{\mathbb{P}}   \def\cP{{\cal P}}
  \def\hQ{\widehat{Q}} \def\cQ{{\cal Q}}
\def\dbR{\mathbb{R}}  \def\hR{\widehat{R}} \def\cR{{\cal R}}
\def\dbS{\mathbb{S}}  \def\hS{\widehat{S}} \def\cS{{\cal S}}
 \def\sU{\mathscr{U}}  \def\cU{{\cal U}}
   \def\cX{{\cal X}}
   \def\cY{{\cal Y}}
   \def\cZ{{\cal Z}}
\def\nid{\,|\,}       \def\lt{\left}          \def\hb{\hbox}
\def\ms{\medskip}     \def\rt{\right}         \def\ae{\text{a.e.}}
\def\h{\hat}          \def\lan{\langle}       \def\as{\text{a.s.}}
\def\q{\quad}         \def\ran{\rangle}       \def\tr{\hb{tr$\,$}}
             \def\diag{\hb{diag\,}}
\def\no{\noindent}          
\def\hp{\hphantom}         \def\scp{\scriptscriptstyle}
\def\nn{\nonumber}         \def\scT{\scp T}
\def\rf{\eqref}       \def\Blan{\Big\lan\!\!} \def\scN{\scp N}
\def\cd{\cdot}        \def\Bran{\!\!\Big\ran} 
\def\deq{\triangleq}  \def\({\Big(}           \def\les{\leqslant}
       \def\){\Big)}           \def\ges{\geqslant}
   \def\[{\Big[}           
\def\wh{\widehat}     \def\]{\Big]}           
\def\a{\alpha}       \def\l{\lambda}    \def\D{\varDelta}
\def\b{\beta}        \def\t{\tau}       \def\F{\varPhi}
\def\d{\delta}           \def\G{\varGamma}
\def\e{\varepsilon}      \def\L{\varLambda}
\def\f{\varphi}      \def\p{\phi}       \def\Om{\varOmega}
       \def\si{\sigma}    \def\Si{\varSigma}
\def\i{\infty}             \def\Th{\varTheta}
       \def\vP{\varPi}    
\newtheoremstyle{thry}
{}      
{}      
{\sl}   
{}      
{\bf}   
{.}     
{.5em}  
{}      
\theoremstyle{thry}
\newtheorem{theorem}{Theorem}[section]
\newtheorem{proposition}[theorem]{Proposition}
\newtheorem{corollary}[theorem]{Corollary}
\newtheorem{lemma}[theorem]{Lemma}
\theoremstyle{definition}
\newtheorem{definition}[theorem]{Definition}
\theoremstyle{remark}
\newtheorem{remark}[theorem]{Remark}
\def\punct{}
\newtheoremstyle{dotless}{}{}{\rm}{}{\bf}{\punct}{.5em}{}
\theoremstyle{dotless}
\newenvironment{taggedassumption}[1]
 {\taggedassumptionx}
 {\endtaggedassumptionx}
   \newcommand{\setword}[2]{%
   \phantomsection
   #1\def\@currentlabel{\unexpanded{#1}}\label{#2}%
   }
\begin{document}

\title{\bf Periodic Exponential Turnpike Phenomenon in Mean-Field Stochastic Linear-Quadratic Optimal Control}
\author{Jingrui Sun\thanks{Department of Mathematics and SUSTech International Center for Mathematics,
                           Southern University of Science and Technology, Shenzhen, Guangdong,
                           518055, China (Email: {\tt sunjr@sustech.edu.cn}).
                           This author is supported in part by NSFC grants 12322118 and 12271242, and
                           Shenzhen Fundamental Research General Program JCYJ20220530112814032. }
~~~~~
Lvning Yuan\thanks{Corresponding author. Department of Mathematics,
		           Southern University of Science and Technology, Shenzhen, Guangdong,
		           518055, China (Email: {\tt yuanln@sustech.edu.cn}).}
~~~~~
Jiaqi Zhang\thanks{Department of Mathematics, Southern University of Science and Technology,
                   Shenzhen, Guangdong, 518055, China (Email: {\tt zhangjq3@sustech.edu.cn}). }
}

\maketitle

\no{\bf Abstract.}
The paper establishes the exponential turnpike property for a class of mean-field stochastic
linear-quadratic (LQ) optimal control problems with periodic coefficients.
It first introduces the concepts of stability, stabilizability, and detectability for stochastic
linear systems.
Then, the long-term behavior of the associated Riccati equations is analyzed under stabilizability
and detectability conditions.
Subsequently, a periodic mean-field stochastic LQ problem is formulated and solved.
Finally, a linear transformation of the periodic extension of its optimal pair
is shown to be the turnpike limit of the initial optimal control problem.

\ms
\no{\bf Key words.}
Exponential turnpike property, stochastic optimal control, periodic, mean-field, linear-quadratic,
Riccati equation, stabilizability, detectability.

\ms
\no{\bf AMS 2020 Mathematics Subject Classification.}  49N10, 49N20, 49N80, 93E20.

\section{Introduction}\label{Sec:Intro}

Let $(\Om,\sF,\dbP)$ be a complete probability space on which a standard one-dimensional
Brownian motion $W=\{W(t),\sF_t;\,t\ges 0\}$ is defined, where $\dbF=\{\sF_t\}_{t\ges0}$
is a filtration satisfying the usual conditions. Consider the following controlled linear
mean-field stochastic differential equation (SDE, for short)
\begin{equation}\label{TP:state}\left\{\begin{aligned}
dX(t) &= \big\{A(t)X(t) + \bA(t)\dbE[X(t)] + B(t)u(t) + \bB(t)\dbE[u(t)] + b(t) \big\}dt \\
      &\hp{=\ } +\big\{C(t)X(t) + \bC(t)\dbE[X(t)] + D(t)u(t) + \bD(t)\dbE[u(t)] + \si(t)\big\}dW(t), \\
 X(0) &= x
\end{aligned}\right.\end{equation}
and the quadratic cost functional
\begin{align}\label{TP:cost}
J_{\scT}(x;u(\cd))
&\deq \dbE\int_0^T\Bigg[\Blan\begin{pmatrix*}[l]Q(t) & \!S(t)^\top \\ S(t) & \!R(t)\end{pmatrix*}\!
                          \begin{pmatrix}X(t) \\ u(t)\end{pmatrix}\!,
                          \begin{pmatrix}X(t) \\ u(t)\end{pmatrix}\Bran
+2\Blan\begin{pmatrix}q(t) \\ r(t)\end{pmatrix}\!,\begin{pmatrix}X(t) \\ u(t)\end{pmatrix}\Bran\nn\\
&\hp{=\ } +\Blan\begin{pmatrix*}[l]\bQ(t) & \!\bS(t)^\top \\ \bS(t) & \!\bR(t)\end{pmatrix*}\!
                \begin{pmatrix}\dbE[X(t)] \\ \dbE[u(t)]\end{pmatrix}\!,
                \begin{pmatrix}\dbE[X(t)] \\ \dbE[u(t)]\end{pmatrix}\Bran\Bigg] dt,
\end{align}
where the coefficients $A(\cd)$, $\bA(\cd)$, $B(\cd)$, $\bB(\cd)$, $b(\cd)$, $C(\cd)$, $\bC(\cd)$,
$D(\cd)$, $\bD(\cd)$, $\si(\cd)$ and $Q(\cd)$, $\bQ(\cd)$, $S(\cd)$, $\bS(\cd)$, $R(\cd)$, $\bR(\cd)$,
$q(\cd)$, $r(\cd)$ are suitable dimensional deterministic matrix/vector-valued functions defined on $[0,\i)$,
with $Q(\cd),\bQ(\cd),R(\cd)$ and $\bR(\cd)$ being symmetric matrices.
The superscript $\top$ denotes the transpose of matrices, and $\lan\cd\,,\cd\ran$ denotes the Frobenius
inner product of two matrices.
All of above coefficients are measurable periodic functions with a common period $\t>0$,
that is, for $\f(\cd)$ being one of the above functions,
\begin{align*}
 \f(t+\t) = \f(t), \q\forall t\ges 0.
\end{align*}
The vector $x\in\dbR^n$ in \rf{TP:state} is called an {\it initial state}, and the process $u(\cd)$,
called a {\it control}, is selected from the following space:
\begin{align*}
\sU[0,T] \deq \lt\{u:[0,T]\times\Om\to\dbR^m \bigm| u(\cd)\in\dbF\text{ and }\dbE\int_0^T|u(t)|^2dt<\i \rt\},
\end{align*}
where $u(\cd)\in\dbF$ means that $u(\cd)$ is progressively measurable with respect to the filtration $\dbF$.
For a fixed time horizon $T>0$, the {\it mean-field stochastic linear-quadratic (LQ, for short) optimal
control problem} can be stated as follows.

\ms

{\bf Problem (MFLQ)$_{\scT}$.}
For a given initial state $x\in\dbR^n$, find a control $\bu_{\scT}(\cd)\in\sU[0,T]$ such that
\begin{align}\label{PTP:opt-u}
J_{\scT}(x;\bu_{\scT}(\cd)) = \inf_{u(\cd)\in\sU[0,T]} J_{\scT}(x;u(\cd)) \equiv V_{\scT}(x).
\end{align}

\smallskip

The process $\bu_{\scT}(\cd)$ in \rf{PTP:opt-u} (if exists) is called an {\it optimal control}
of Problem (MFLQ)$_{\scT}$ for the initial state $x$,
the corresponding state process $\bX_{\scT}(\cd)$ is called an {\it optimal state process},
the pair $(\bX_{\scT}(\cd),\bu_{\scT}(\cd))$ is called an {\it optimal pair},
and the function $V_{\scT}(\cd)$ is called the {\it value function} of Problem (MFLQ)$_{\scT}$.

\ms

The above Problem (MFLQ)$_{\scT}$ was initially investigated by Yong \cite{Yong2013},
in which the nonhomogeneous terms $b(\cd)$, $\si(\cd)$, $q(\cd)$, and $r(\cd)$ are absent,
and the weighting coefficients are assumed to be positive definite.
Later, Huang--Li--Yong \cite{Huang-Li-Yong2015} and Ni--Elliott--Li \cite{Ni-Elliott-Li2015}
extended the results of \cite{Yong2013} to the infinite time horizon, in the context of
continuous-time and discrete-time systems.
Sun \cite{Sun2017} further carried out a study on the indefinite version of Problem (MFLQ)$_{\scT}$
and established its open-loop solvability under the uniform-convexity condition.
Since then, a number of interesting works on this topic appeared, including but not limited to  \cite{Ni-Li-Zhang2015,Basei-Pham2019,Li-Li-Yu2020,Lu2020,Wang-Zhang2021,Yang-Tang-Meng2022}.

\ms

Different from the above-mentioned literature primarily focused on a fixed time horizon $T$,
this paper delves into the investigation of the long-time behavior of the optimal pair as $T$
tends to infinity, with particular emphasis on the phenomenon called the {\it exponentially
turnpike property}.

\ms

The turnpike property, originally discussed by Ramsey \cite{Ramsey1928} and von Neumann \cite{Neumann1945},
and subsequently named by Dorfman--Samuelson--Solow \cite{Dorfman-Samuelson-Solow1958}, characterizes
the tendency of optimal trajectories to spend a significant amount of time in the vicinity of a particular
steady state, independent of the time horizon.
Extensive progress has been made in the study of turnpike properties for deterministic optimal
control problems, encompassing diverse perspectives such as finite and infinite-dimensional problems,
as well as discrete-time and continuous-time systems, with noteworthy contributions; see, for example, \cite{Carlson-Haurie-Leizarowitz1991,Porretta-Zuazua2013,Damm-Grune-Stieler-Worthmann2014,Trelat-Zuazua2015,
Zuazua2017,Grune-Guglielmi2018,Zaslavski2019,Lou-Wang2019,Breiten-Pfeiffer2020,Sakamoto-Zuazua2021,
Faulwasser-Grune2022}, and references cited therein.
In stochastic cases, Sun--Wang--Yong \cite{Sun-Wang-Yong2022} established a weak exponential turnpike
property for an LQ optimal control problem with constant coefficients, followed by a deeper study
\cite{Sun-Yong2024:SICON} on mean-field LQ problems.
For the case of periodic coefficients, Sun--Yong \cite{Sun-Yong2024:JDE} explored the turnpike property
for the stochastic LQ problem without mean-field terms.

\ms

Compared to the aforementioned studies, this paper represents a significant extension by exploring
the turnpike property in mean-field stochastic LQ optimal control problems with periodic coefficients.
The key contributions and novelties of our work can be summarized as follows.

\ms

$\bullet$
Both the state equation and the cost functional incorporate expectations of the state and control
processes. This framework exhibits greater generality and holds significant potential for diverse
applications.

\ms

$\bullet$
The exponential turnpike property is established under the exact detectability condition for
stochastic linear systems, which is weaker than the positive definiteness condition assumed
in \cite{Sun-Wang-Yong2022, Sun-Yong2024:SICON, Sun-Yong2024:JDE}.

\ms

$\bullet$
A periodic mean-field stochastic LQ optimal control problem is formulated and solved,
with a linear transformation of its optimal pair's periodic extension 
demonstrated to represent the turnpike limit for Problem (MFLQ)$_{\scT}$.

\ms

$\bullet$
The exponential turnpike property is established not only in terms of trajectory
but also in the distributional sense.
This provides broader potential applications in practical settings.

\ms

The remainder of the paper is organized as follows.
\autoref{Sec:Pre} introduces notation, assumptions, and preliminary results.
\autoref{Sect:Stable-detectable} investigates the stability, stabilizability,
and detectability for stochastic linear systems.
\autoref{Sec:SRE} analyzes the long-term behavior of the associated Riccati equations.
\autoref{Sec:PMFLQ} formulates and solves a periodic mean-field stochastic LQ optimal control problem,
and \autoref{Sec:TP} establishes the exponential turnpike property for Problem (MFLQ)$_{\scT}$.

\section{Preliminaries}\label{Sec:Pre}

In this paper, a vector always refers to a column vector unless otherwise specified.
Let $\dbR^{m\times n}$ be the Euclidean space of all $m\times n$ real matrices,
equipped with the Frobenius inner product
\begin{equation*}
	\lan M, N\ran \deq \tr(M^{\top} N),\q\forall M,N\in\dbR^{m\times n},
\end{equation*}
where $\tr(M^\top N)$ stands for the trace of $M^{\top} N$.
The norm of a matrix $M$ induced by the Frobenius inner product is denoted as $|M|$.
Let $\dbS^n$ be the space of all $n\times n$ symmetric real matrices and $\dbS_+^n$
(respectively, $\bar{\dbS}_+^n$) the space of all $n\times n$ positive definite
(respectively, semi-definite) real matrices.
For $M,N\in\dbS^n$, we write $M\ges N$ (respectively, $M>N$) if $M-N$ is positive
semi-definite (respectively, positive definite).
Denote by $I_n$ the identity matrix of size $n$.
We call a function $F:[0,\i)\mapsto \dbS^{n}$ {\it uniformly positive definite}
if for some constant $\d>0$,
$$ F(t)\ges\d I_n,  \q\ae~t\in[0,\i). $$
For a metric space $\dbH$, let
\begin{align*}
&L^{\i}(0,\i;\dbH)\deq\lt\{\f:[0,\i)\mapsto\dbH\bigm|\f\hb{ is Lebesgue essentially bounded}\rt\},\\
&C([0,\i);\dbH)\deq\lt\{\f:[0,\i)\mapsto\dbH\bigm|\f\hb{ is continuous}\rt\},\\
&C([0,T];\dbH)\deq\lt\{\f:[0,T]\mapsto \dbH\bigm|\f\hb{ is continuous}\rt\}.
\end{align*}
For the sake of brevity, we introduce the following notation:
$$\begin{aligned}
 \hA(\cd)&\deq A(\cd)+\bA(\cd),
&\hB(\cd)&\deq B(\cd)+\bB(\cd),
&\hC(\cd)&\deq C(\cd)+\bC(\cd),
&\hD(\cd) \deq D(\cd)+\bD(\cd), \\
 \hQ(\cd)&\deq Q(\cd)+\bQ(\cd),
&\hS(\cd)&\deq S(\cd)+\bS(\cd),
&\hR(\cd)&\deq R(\cd)+\bR(\cd).
\end{aligned}$$
Throughout the paper, we adopt the following basic assumptions.
\begin{taggedassumption}{(A1)}\label{A1}
The coefficients in \rf{TP:state} and \rf{TP:cost} are periodic functions
with a common period $\t>0$ and satisfy the following boundedness condition:
$$\lt\{\begin{aligned}
&  A(\cd),\bA(\cd),C(\cd),\bC(\cd)\in L^\i(0,\i;\dbR^{n\times n}),
\q B(\cd),\bB(\cd),D(\cd),\bD(\cd)\in L^\i(0,\i;\dbR^{n\times m}), \\
&  Q(\cd),\bQ(\cd)\in L^\i(0,\i;\dbS^n),
\q S(\cd),\bS(\cd)\in L^\i(0,\i;\dbR^{m\times n}),
\q R(\cd),\bR(\cd)\in L^\i(0,\i;\dbS^m), \\
&  b(\cd),\si(\cd),q(\cd)\in L^\i(0,\i;\dbR^n),
\q r(\cd)\in L^\i(0,\i;\dbR^m).
\end{aligned}\rt.$$
\end{taggedassumption}
\begin{taggedassumption}{(A2)}\label{A2}
The functions $R(\cd)$ and $\hR(\cd)$ are uniformly positive definite, and
$$ Q(\cd)-S(\cd)^{\top}R(\cd)^{-1}S(\cd) \ges 0,
\q \hQ(\cd)-\hS(\cd)^{\top}\hR(\cd)^{-1}\hS(\cd) \ges 0. $$
\end{taggedassumption}

The following result, found in \cite{Sun2017}, establishes the unique solvability
of Problem (MFLQ)$_{\scT}$ for a fixed time horizon $T$.

\begin{lemma}\label{lmm:Sun2017}
Let {\rm\ref{A1}--\ref{A2}} hold. Then the system of differential Riccati equations
\begin{equation}\label{DRE:P+vP}\left\{\begin{aligned}
& \dot P_{\scT} + P_{\scT}A + A^\top P_{\scT} + C^\top P_{\scT}C + Q \\
&\hp{P_{\scT}} -\(P_{\scT}B+C^\top P_{\scT}D+S^\top\)\(R+D^\top P_{\scT}D\)^{-1}
\(B^\top P_{\scT}+D^\top P_{\scT}C+S\)=0, \\
&\dot\vP_{\scT} + \vP_{\scT}\hA + \hA^\top\vP_{\scT} + \hQ + \hC^\top P_{\scT}\hC \\
&\hp{\vP_{\scT}} -\(\vP_{\scT}\hB+\hC^\top P_{\scT}\hD+\hS^\top\)\(\hR+\hD^\top P_{\scT}\hD\)^{-1}
\(\hB^\top\vP_{\scT}+\hD^\top P_{\scT}\hC +\hS\)=0, \\
&P_{\scT}(T) =0, \q \vP_{\scT}(T)=0
\end{aligned}\right.\end{equation}
admits a unique solution pair $(P_{\scT}(\cd),\vP_{\scT}(\cd))\in C([0,T];\dbS^n)\times C([0,T];\dbS^n)$
satisfying
$$ P_{\scT}(t)\ges 0, \q \vP_{\scT}(t) \ges 0, \q\forall t\in[0,T]. $$
Furthermore, for each initial state $x\in\dbR^n$, Problem (MFLQ)$_{\scT}$
admits a unique optimal control $\bu_{\scT}(\cd)$, given by
\begin{align}\label{MFLQ:u-rep}
\bu_{\scT}(t) = \Th_{\scT}(t)\(\bX_{\scT}(t)-\dbE[\bX_{\scT}(t)]\)
		+ \wh\Th_{\scT}(t)\dbE[\bX_{\scT}(t)]+\phi_{\scT}(t),
\end{align}
where $\Th_{\scT}(\cd)$, $\wh\Th_{\scT}(\cd)$, and $\phi_{\scT}(\cd)$ are defined as follows:
\begin{align}
\Th_{\scT}(t) &\deq -\[R(t)+D(t)^\top P_{\scT}(t)D(t)\]^{-1}
\[B(t)^\top P_{\scT}(t) +D(t)^\top P_{\scT}(t)C(t) +S(t)\], \label{MFLQ:Th-rep}\\
\wh\Th_{\scT}(t) &\deq -\[\hR(t)+\hD(t)^\top P_{\scT}(t)\hD(t)\]^{-1}
\[\hB(t)^\top\vP_{\scT}(t)+\hD(t)^\top P_{\scT}(t)\hC(t) +\hS(t)\], \label{MFLQ:whTH-rep}\\
\phi_{\scT}(t) &\deq -\[\hR(t)+\hD(t)^\top P_{\scT}(t)\hD(t)\]^{-1}\[\hB(t)^\top\f_{\scT}(t)+\hD(t)^\top P_{\scT}(t)\si(t)+r(t)\], \label{MFLQ:phi-rep}
\end{align}
with $\f_{\scT}(\cd)$ being the solution to the following ODE:
\begin{equation}\label{MF:bareta-d}\left\{\begin{aligned}
& \dot\f_{\scT}(t) +\[\hA(t)+\hB(t)\wh\Th_{\scT}(t)\]^\top\f_{\scT}(t)
+\[\hC(t)+\hD(t)\wh\Th_{\scT}(t)\]^\top P_{\scT}(t)\si(t) \\
&\hp{\dot\f_{\scT}(t)} + \wh\Th_{\scT}(t)^\top r(t) +\vP_{\scT}(t) b(t)+ q(t) =0, \q t\in[0,T],\\
& \f_{\scT}(T)=0.
\end{aligned}\right.\end{equation}
\end{lemma}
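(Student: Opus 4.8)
The plan is to reduce the mean-field problem to a pair of coupled but sequentially solvable standard LQ problems by splitting each process into its mean and its fluctuation about the mean. Writing $\bar X(\cd)=\dbE[X(\cd)]$, $\bar u(\cd)=\dbE[u(\cd)]$ and $X^0(\cd)=X(\cd)-\bar X(\cd)$, $u^0(\cd)=u(\cd)-\bar u(\cd)$, taking expectations in \rf{TP:state} shows that $\bar X(\cd)$ obeys the deterministic ODE $d\bar X=\{\hA\bar X+\hB\bar u+b\}dt$, while the fluctuation $X^0(\cd)$ solves an SDE governed by $A,B,C,D$ whose diffusion coefficient still feels the mean through $\hC\bar X+\hD\bar u+\si$. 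Applying the elementary identity $\dbE\lan M\xi,\xi\ran=\dbE\lan M\xi^0,\xi^0\ran+\lan M\bar\xi,\bar\xi\ran$ to each quadratic block, the cost \rf{TP:cost} separates into a purely fluctuation part weighted by $Q,S,R$ and a mean part weighted by $\hQ,\hS,\hR$, with the linear terms $q,r$ surviving only in the mean part since they are deterministic. This is precisely the decomposition mirrored by the two equations of \rf{DRE:P+vP}.

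First I would establish the uniform convexity of the map $u(\cd)\mapsto J_{\scT}(x;u(\cd))$ under \ref{A2}. Completing the square at the level of the integrand, $R\ges\d I$ together with $Q-S^\top R^{-1}S\ges0$ renders the fluctuation cost a uniformly convex quadratic form in $u^0(\cd)$, and $\hR\ges\d I$ with $\hQ-\hS^\top\hR^{-1}\hS\ges0$ does the same for the mean cost in $\bar u(\cd)$. Uniform convexity and coercivity of a quadratic functional on the Hilbert space $\sU[0,T]$ then yield a unique minimizer, settling the existence and uniqueness of $\bu_{\scT}(\cd)$ before any Riccati analysis.

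Next I would solve the Riccati system sequentially, exploiting that the first equation of \rf{DRE:P+vP} involves $P_{\scT}(\cd)$ alone. It is exactly the differential Riccati equation of the standard (non-mean-field) fluctuation problem; identifying $\lan P_{\scT}(t)\,\cd\,,\cd\ran$ with the value function of the homogeneous fluctuation problem (all nonhomogeneous data set to zero) exhibits $P_{\scT}(\cd)$ as a nonnegative quadratic form, which furnishes the a priori bound ruling out finite escape and hence global existence on $[0,T]$, together with the non-degeneracy $R+D^\top P_{\scT}D\ges R\ges\d I>0$. With $P_{\scT}(\cd)\ges0$ available, the second equation becomes a standard Riccati equation for $\vP_{\scT}(\cd)$ in which $P_{\scT}$ enters as a known nonnegative coefficient through $\hC^\top P_{\scT}\hC$ and $\hD^\top P_{\scT}\hD$; since $\hR+\hD^\top P_{\scT}\hD\ges\hR\ges\d I>0$, the same reasoning gives a unique, global, nonnegative $\vP_{\scT}(\cd)$. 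The terminal-value ODE \rf{MF:bareta-d} for $\f_{\scT}(\cd)$ is then linear with bounded coefficients, hence uniquely solvable.

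Finally I would verify the feedback representation \rf{MFLQ:u-rep} by completion of squares. Applying It\^o's formula to $\lan P_{\scT}(t)X^0(t),X^0(t)\ran+\lan\vP_{\scT}(t)\bar X(t),\bar X(t)\ran+2\lan\f_{\scT}(t),\bar X(t)\ran$ along the trajectory of an arbitrary admissible $u(\cd)$ and invoking the Riccati equations, the cost rewrites as $V_{\scT}(x)$ plus two nonnegative penalty terms that vanish exactly when $u^0(\cd)$ and $\bar u(\cd)$ coincide with the feedback expressions in \rf{MFLQ:Th-rep}--\rf{MFLQ:phi-rep}; this simultaneously confirms optimality and recovers \rf{MFLQ:u-rep}. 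The main obstacle is the coupling in the Riccati system: one must secure global, nonnegative solvability and the uniform non-degeneracy of both $R+D^\top P_{\scT}D$ and $\hR+\hD^\top P_{\scT}\hD$ on all of $[0,T]$, and it is exactly the convexity supplied by \ref{A2}, transmitted through the nonnegative value function, that closes this loop and prevents blow-up of $P_{\scT}(\cd)$ and $\vP_{\scT}(\cd)$.
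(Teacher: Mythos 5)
Your argument is correct and is essentially the standard proof of this result: the paper itself does not prove the lemma but quotes it from \cite{Sun2017}, and the mean--fluctuation decomposition $X=\bar X+X^0$, the splitting of the cost into a $(Q,S,R)$-weighted fluctuation part and an $(\hQ,\hS,\hR)$-weighted mean part, the sequential solution of the two Riccati equations via value-function interpretations, and the final completion of squares are exactly the route taken there. The only step you gloss over is the nonnegativity of $\vP_{\scT}$: the mean problem inherits the effective weights $\hQ+\hC^\top P_{\scT}\hC$, $\hS+\hD^\top P_{\scT}\hC$, $\hR+\hD^\top P_{\scT}\hD$, and one must verify that $\hQ+\hC^\top P_{\scT}\hC-(\hS+\hD^\top P_{\scT}\hC)^\top(\hR+\hD^\top P_{\scT}\hD)^{-1}(\hS+\hD^\top P_{\scT}\hC)\ges 0$, which does follow from \ref{A2} together with $P_{\scT}\ges0$ but requires the algebraic identity that the paper carries out explicitly (for the analogous matrices) in the proof of \autoref{PMFLQ:Propy}.
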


\section{Stability, stabilizability, and detectability}\label{Sect:Stable-detectable}

In this section, we introduce the concepts of mean-square exponential stability
and stabilizability for stochastic linear systems, as well as exact detectability
when observation variables exist.
As shown in the subsequent sections, these concepts are crucial to our discussion
of the turnpike property.

\ms

Denote by $[A(\cd),C(\cd)]$ the uncontrolled stochastic linear system
\begin{align}\label{SDE:AC}
dX(t) = A(t)X(t)dt+C(t)X(t)dW(t),
\end{align}
and by $[A(\cd),C(\cd)\nid E(\cd)]$ the following system with an observation process $Y(\cd)$:
\begin{equation}\label{SDE:XY}\lt\{\begin{aligned}
dX(t) &= A(t)X(t)dt+C(t)X(t)dW(t), \\
 Y(t) &= E(t)X(t),
\end{aligned}\rt.\end{equation}
where $E(\cd)\in L^\i(0,\i;\dbR^{l\times n})$ is a $\t$-periodic function.
The solution of \rf{SDE:AC} with initial state $x\in\dbR^n$ is denoted by $X(\cd\,;x)$,
and the corresponding observation process is denoted by $Y(\cd\,;x)$.
Let $\F(\cd)$ be the solution to the following matrix SDE:
$$ d\F(t)=A(t)\F(t)dt+C(t)\F(t)dW(t), \q \F(0)=I_{n}, \q t\ges0. $$
Then the state process $X(\cd\,;x)$ and the observation process $Y(\cd\,;x)$ can be represented as follows:
$$ X(t;x)=\F(t)x, \q Y(t;x)=E(t)\F(t)x, \q\forall t\ges 0. $$

\begin{definition}\label{Def:AC-stable}
The system $[A(\cd),C(\cd)]$ is called {\it mean-square exponentially stable}
if there exist constants $K,\l> 0$ such that
$$ \dbE|\F(t)|^{2}\les Ke^{-\l t}, \q\forall t\ges 0. $$
\end{definition}

\begin{remark}\label{remark:Sun-Yong2024}
It was shown in \cite[Proposition 3.3]{Sun-Yong2024:JDE} that $[A(\cd),C(\cd)]$
is mean-square exponentially stable if and only if for each (or equivalently, for some)
$\t$-periodic, uniformly positive definite function $\L(\cd)\in L^\i(0,\i;\dbS_+^n)$,
the Lyapunov differential equation
\begin{align}\label{P:stable}
\dot P(t) + P(t)A(t) + A(t)^\top P(t) + C(t)^\top P(t)C(t) + \L(t) =0
\end{align}
admits a $\t$-periodic, uniformly positive definite solution $P(\cd)\in C([0,\i);\dbS_+^n)$.
\end{remark}

\begin{definition}\label{Def:detactable}
The system $[A(\cd),C(\cd)\nid E(\cd)]$ is called {\it exactly detectable}
if
$$ \lim_{t\to\i}\dbE|X(t;x)|^2=0 $$
for any initial state $x\in\dbR^n$ satisfying
$$ Y(t;x) = 0, \q\as $$
for almost every $t\ges0$.
\end{definition}

\begin{remark}\label{remark:detec}
If $[A(\cd),C(\cd)]$ is mean-square exponentially stable, then by definition,
$[A(\cd),C(\cd)\nid E(\cd)]$ is exactly detectable for all $E(\cd)\in L^\i(0,\i;\dbR^{l\times n})$.
Conversely, if $l\ges n$ and $E(t)$ has rank $n$ for almost every $t\ges0$,
then $[A(\cd),C(\cd)\nid E(\cd)]$ is exactly detectable.
\end{remark}

The following result establishes a connection between mean-square exponential stability
and exact detectability. For the proof, we refer the reader to \cite[Theorem 3.1]{Dragon-etl2021}.

\begin{proposition}\label{prop:detec-stable}
Suppose that $[A(\cd),C(\cd)\nid E(\cd)]$ is exactly detectable. If for
$$
\L(\cd)\deq E(\cd)^\top E(\cd),
$$
equation \rf{P:stable} admits a $\t$-periodic solution $P(\cd)\in C([0,\i);\bar{\dbS}_+^n)$,
then $[A(\cd),C(\cd)]$ is mean-square exponentially stable.
\end{proposition}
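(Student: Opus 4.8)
The plan is to convert mean-square exponential stability into a spectral condition on the period (monodromy) operator of the second-moment dynamics, and then to use the Lyapunov solution $P(\cd)$ together with exact detectability to push the relevant spectral radius below $1$.

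First I would record the second-moment dynamics. Applying It\^o's formula to $X(\cd\,;x)X(\cd\,;x)^\top$ shows that $\cM(t)\deq\dbE[X(t;x)X(t;x)^\top]$ is absolutely continuous and solves the $\t$-periodic linear matrix ODE
\[
\dot\cM = A\cM+\cM A^\top+C\cM C^\top,\q \cM(0)=xx^\top .
\]
Since $P(\cd)$ is absolutely continuous and satisfies \rf{P:stable} with $\L=E^\top E$, differentiating $\lan P(t),\cM(t)\ran$ and cancelling the cross terms (using the symmetry of $\cM$ and the cyclicity of the trace) gives the energy identity $\frac{d}{dt}\lan P(t),\cM(t)\ran=-\lan E(t)^\top E(t),\cM(t)\ran=-\dbE|Y(t;x)|^2$. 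Integrating and using $P(\cd)\ges0$ yields $\int_0^\i\dbE|Y(t;x)|^2dt\les\lan P(0)x,x\ran<\i$ for every $x$.

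Next I would set up the Floquet picture. Let $\cT(t,s)$ be the transition operator on $\dbS^n$ associated with the matrix ODE above; it preserves the cone $\bar{\dbS}_+^n$ and is $\t$-periodic, $\cT(t+\t,s+\t)=\cT(t,s)$. Writing $\cU\deq\cT(\t,0)$ for the monodromy operator, one has $\cM(k\t)=\cU^k\cM(0)$, and since $\dbE|\F(t)|^2$ is comparable to $\tr\cM(t)$ summed over the standard basis, a standard Floquet argument identifies mean-square exponential stability of $[A(\cd),C(\cd)]$ with the spectral condition $r(\cU)<1$, where $r(\cU)$ denotes the spectral radius. Because $\cU$ maps the closed, pointed, solid cone $\bar{\dbS}_+^n$ into itself, the Krein--Rutman theorem supplies an eigenvector $\Xi\in\bar{\dbS}_+^n\setminus\{0\}$ with $\cU\Xi=r(\cU)\Xi$.

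The crux, which I expect to be the main obstacle, is to show $r\deq r(\cU)<1$; this is exactly where exact detectability is used. I would argue by contradiction, assuming $r\ges1$ and running the second-moment flow from $\cM(0)=\Xi$, so that $\cM(k\t)=r^k\Xi$. The energy identity integrated over $[0,k\t]$ reads $\int_0^{k\t}\lan E^\top E,\cM(t)\ran dt=(1-r^k)\lan P(0),\Xi\ran$. Here the left-hand side is nonnegative (the integrand equals $\dbE|Y(t)|^2\ges0$), while the right-hand side is nonpositive (as $\lan P(0),\Xi\ran\ges0$ for two positive semidefinite matrices and $r\ges1$); hence both vanish for every $k$, forcing $\lan E(t)^\top E(t),\cM(t)\ran=0$ for almost every $t\ges0$. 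Writing the spectral decomposition $\Xi=\sum_i\mu_i v_iv_i^\top$ with $\mu_i>0$, linearity gives $\cM(t)=\sum_i\mu_i\dbE[X(t;v_i)X(t;v_i)^\top]$ and $\lan E^\top E,\cM(t)\ran=\sum_i\mu_i\dbE|Y(t;v_i)|^2$; as each summand is nonnegative, $Y(t;v_i)=0$ almost surely for almost every $t$. Exact detectability then yields $\dbE|X(t;v_i)|^2\to0$, and hence $\tr\cM(t)=\sum_i\mu_i\dbE|X(t;v_i)|^2\to0$ as $t\to\i$, contradicting $\tr\cM(k\t)=r^k\tr\Xi\ges\tr\Xi>0$. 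Therefore $r(\cU)<1$, which by the Floquet equivalence is precisely the asserted mean-square exponential stability of $[A(\cd),C(\cd)]$.
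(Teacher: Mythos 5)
Your argument is correct. Note, however, that the paper does not prove this proposition at all: it simply defers to \cite[Theorem 3.1]{Dragon-etl2021}, so there is no in-paper proof to compare against. What you have supplied is a complete, self-contained alternative: the energy identity $\frac{d}{dt}\lan P(t),\cM(t)\ran=-\dbE|Y(t;x)|^2$ is the standard consequence of the Lyapunov equation applied to the second-moment flow $\dot\cM=A\cM+\cM A^\top+C\cM C^\top$; the reduction of mean-square exponential stability to $r(\cU)<1$ for the monodromy operator is the usual Floquet argument (and you only need the direction $r(\cU)<1\Rightarrow$ stability, which follows from the spectral radius formula plus uniform boundedness of $\cT(s,0)$ on $[0,\t]$); and the Perron--Frobenius/Krein--Rutman eigenvector $\Xi\in\bar\dbS^n_+\setminus\{0\}$ is legitimately available because $\bar\dbS^n_+$ is a proper (closed, pointed, solid) cone in the finite-dimensional space $\dbS^n$ that $\cU$ leaves invariant. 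The contradiction step is clean: with $r\ges1$ and $P(k\t)=P(0)$ by $\t$-periodicity, both sides of $\int_0^{k\t}\lan E^\top E,\cM\ran\,dt=(1-r^k)\lan P(0),\Xi\ran$ must vanish, the spectral decomposition of $\Xi$ lets you peel off each rank-one piece to get $Y(\cd\,;v_i)\equiv0$, and exact detectability then forces $\tr\cM(t)\to0$ against $\tr\cM(k\t)=r^k\tr\Xi\ges\tr\Xi>0$. The only places a referee might ask you to expand are the two ``standard'' invocations (the Floquet equivalence and the cone version of Perron--Frobenius), but both are genuinely standard and correctly applied. What your route buys over the paper's citation is self-containedness and transparency about exactly where detectability enters, namely in killing the possibility of a nonnegative eigenvector of the period map with eigenvalue $\ges1$.
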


Next, we consider the following controlled linear system, which we denote by
$[A(\cd),C(\cd);B(\cd),D(\cd)]$ for simplicity:
\begin{equation}\label{sys:ABCD}
dX(t) = [A(t)X(t)+B(t)u(t)]dt + [C(t)X(t)+D(t)u(t)]dW(t), \q t\ges0.
\end{equation}
When $C(\cd)=0$ and $D(\cd)=0$, the above system $[A(\cd),0\,;B(\cd),0]$
degenerates into a controlled linear ODE system.

\begin{definition}
The system $[A(\cd),C(\cd);B(\cd),D(\cd)]$ is called {\it mean-square exponentially stabilizable}
if there exists a $\t$-periodic function $\Th(\cd)\in L^{\i}(0,\i;\dbR^{m\times n})$ such
that the uncontrolled system $[A(\cd)+B(\cd)\Th(\cd),C(\cd)+D(\cd)\Th(\cd)]$ is mean-square
exponentially stable.
In this case, $\Th(\cd)$ is called a {\it stabilizer} of $[A(\cd),C(\cd);B(\cd),D(\cd)]$.
\end{definition}

In preparation for establishing a connection between mean-square exponential stabilizability
and exact detectability, we first present the following lemma.

\begin{lemma}\label{lmm:ED-th}
Let $N(\cd)\in L^\i(0,\i;\dbS^m)$ be a $\t$-periodic, uniformly positive definite function.
Let $\Th(\cd)\in L^\i(0,\i;\dbR^{m\times n})$ also be $\t$-periodic,
and set $E_{\scp\Th}(t)\deq\begin{pmatrix}E(t)\\ N(t)^{1/2}\Th(t)\end{pmatrix}$.
If $[A(\cd),C(\cd)\nid E(\cd)]$ is exactly detectable,
then $[A(\cd)+B(\cd)\Th(\cd),C(\cd)+D(\cd)\Th(\cd)\nid E_{\scp\Th}(\cd)]$ is also exactly detectable.
\end{lemma}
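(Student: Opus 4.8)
The plan is to reduce the exact detectability of the closed-loop augmented system directly to that of the original system $[A(\cd),C(\cd)\nid E(\cd)]$, by showing that under the null-observation hypothesis the feedback perturbation simply disappears. First I would fix an initial state $x\in\dbR^n$ and let $X_{\scp\Th}(\cd\,;x)$ denote the state process of $[A(\cd)+B(\cd)\Th(\cd),C(\cd)+D(\cd)\Th(\cd)]$ launched from $x$. Assuming the augmented observation vanishes, i.e.\ $E_{\scp\Th}(t)X_{\scp\Th}(t;x)=0$ almost surely for almost every $t\ges0$, I would read off the two blocks separately: the top block gives $E(t)X_{\scp\Th}(t;x)=0$, while the bottom block gives $N(t)^{1/2}\Th(t)X_{\scp\Th}(t;x)=0$. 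Since $N(\cd)$ is uniformly positive definite, $N(t)^{1/2}$ is invertible with uniformly bounded inverse, so the latter forces $\Th(t)X_{\scp\Th}(t;x)=0$ almost surely for almost every $t\ges0$.

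The crucial step is to observe that this identity annihilates the feedback terms. Because $B(t)\Th(t)X_{\scp\Th}(t;x)=0$ and $D(t)\Th(t)X_{\scp\Th}(t;x)=0$ almost surely for almost every $t$, in the integral form of the closed-loop SDE the contributions from $\int_0^t B\Th X_{\scp\Th}\,ds$ and $\int_0^t D\Th X_{\scp\Th}\,dW$ are almost surely zero. Hence $X_{\scp\Th}(\cd\,;x)$ in fact satisfies
$$ dX_{\scp\Th}(t) = A(t)X_{\scp\Th}(t)\,dt + C(t)X_{\scp\Th}(t)\,dW(t), \q X_{\scp\Th}(0)=x, $$
which is precisely the uncontrolled system $[A(\cd),C(\cd)]$. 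By uniqueness of solutions to linear SDEs, $X_{\scp\Th}(\cd\,;x)$ then coincides with the state process $X(\cd\,;x)$ of $[A(\cd),C(\cd)]$ started from the same $x$.

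It then remains to invoke the hypothesis. Since $X_{\scp\Th}(\cd\,;x)=X(\cd\,;x)$ and the top block already gave $E(t)X(t;x)=0$ almost surely for almost every $t\ges0$, the initial state $x$ produces a null observation $Y(\cd\,;x)=0$ for the original system $[A(\cd),C(\cd)\nid E(\cd)]$. Exact detectability of that system yields $\lim_{t\to\i}\dbE|X(t;x)|^2=0$, and because $X_{\scp\Th}=X$ this is exactly $\lim_{t\to\i}\dbE|X_{\scp\Th}(t;x)|^2=0$, which establishes exact detectability of the augmented closed-loop system. I expect the only genuinely delicate point to be the rigorous passage from the pointwise-in-$(t,\om)$ vanishing of $\Th X_{\scp\Th}$ to the vanishing of the drift and diffusion integrals: this needs a Fubini argument to align the ``a.e.\ $t$, a.s.\ $\om$'' null sets, together with a clean appeal to the SDE uniqueness theorem. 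Everything else reduces to bookkeeping on the block structure of $E_{\scp\Th}(\cd)$ and the invertibility of $N(\cd)^{1/2}$.
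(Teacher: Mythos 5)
Your proposal is correct and follows essentially the same route as the paper's proof: split the augmented observation into its two blocks, use the uniform positive definiteness of $N(\cd)$ to conclude $\Th(t)X_{\scp\Th}(t;x)=0$, note that the closed-loop SDE then collapses to the open-loop one so $X_{\scp\Th}(\cd\,;x)=X(\cd\,;x)$ by uniqueness, and invoke the exact detectability of $[A(\cd),C(\cd)\nid E(\cd)]$. Your closing remark about aligning the null sets via Fubini is a fair observation of a point the paper passes over silently, but it does not change the argument.
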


\begin{proof}
Denote by $X_{\scp\Th}(\cd\,;x)$ the solution of
\begin{equation}\label{SDE:XTh}\lt\{\begin{aligned}
dX_{\scp\Th}(t) &= [A(t)+B(t)\Th(t)]X_{\scp\Th}(t)dt+[C(t)+D(t)\Th(t)]X_{\scp\Th}(t)dW(t), \q t\ges0, \\
 X_{\scp\Th}(0) &= x.
\end{aligned}\right.\end{equation}
If $E_{\scp\Th}(t)X_{\scp\Th}(t;x)=0$ $\as$ for $\ae~t\ges 0$, then
$$ E(t)X_{\scp\Th}(t;x)=0, \q N(t)^{1/2}\Th(t)X_{\scp\Th}(t;x)=0, \q\as~\ae~t\ges 0, $$
which is equivalent to
\begin{align}\label{E-ETh}
E(t)X_{\scp\Th}(t;x)=0, \q \Th(t)X_{\scp\Th}(t;x)=0, \q\as~\ae~t\ges 0,
\end{align}
since $N(t)$ is positive definite for $\ae~t\ges 0$.
Consequently, the SDE \rf{SDE:XTh} reduces to
$$\lt\{\begin{aligned}
dX_{\scp\Th}(t) &= A(t)X_{\scp\Th}(t)dt + C(t)X_{\scp\Th}(t)dW(t), \q t\ges0, \\
 X_{\scp\Th}(0) &= x.
\end{aligned}\right.$$
Because $X_{\scp\Th}(\cd\,;x)$ and $X(\cd\,;x) $ satisfy the same SDE,
we must have $X_{\scp\Th}(\cd\,;x)=X(\cd\,;x)$. By the first equation in \rf{E-ETh}
and the exact detectability of $[A(\cd),C(\cd)\nid E(\cd)]$, we obtain
$$\lim_{t\to\i}\dbE|X_{\scp\Th}(t;x)|^2=\lim_{t\to\i}\dbE|X(t;x)|^2=0.$$
This proves the exact detectability of $[A(\cd)+B(\cd)\Th(\cd),C(\cd)+D(\cd)\Th(\cd)\nid E_{\scp\Th}(\cd)]$.
\end{proof}

We now establish a connection between mean-square exponential stabilizability
and exact detectability.

\begin{proposition}\label{prop:P-inf}
Suppose that $[A(\cd),C(\cd)\nid E(\cd)]$ is exactly detectable, and let $M(\cd)\deq E(\cd)^{\top}E(\cd)$.
Then the system $[A(\cd),C(\cd);B(\cd),D(\cd)]$ is mean-square exponentially stabilizable if and only if
for some (or equivalently, for any) $\t$-periodic, uniformly positive definite function $N(\cd)\in L^\i(0,\i;\dbS^m)$,
the differential Riccati equation
\begin{equation}\label{Sect3:Riccati-P}\begin{aligned}
&\dot{P}+PA+A^{\top}P+C^{\top}PC+M \\
&\hp{P} -(PB+C^{\top}PD)(N+D^{\top}PD)^{-1}(B^{\top}P+D^{\top}PC)=0
\end{aligned}\end{equation}
admits a unique $\t$-periodic solution $P(\cd)\in C([0,\i);\bar\dbS_+^n)$.
In this case, the function $\Th(\cd)$ defined by
\begin{equation}\label{Sect3:Stabilizer}
\Th(t)\deq-[N(t)+D(t)^{\top}P(t)D(t)]^{-1}[B(t)^{\top}P(t)+D(t)^{\top}P(t)C(t)]
\end{equation}
is a stabilizer of $[A(\cd),C(\cd);B(\cd),D(\cd)]$.
\end{proposition}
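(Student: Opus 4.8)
The plan is to decouple the stated equivalence into its two implications, to obtain the final ``stabilizer'' assertion as a by-product of the easy direction, and to treat uniqueness separately. The algebraic backbone of the whole argument is a completion-of-squares identity: for an arbitrary symmetric $P(\cd)$ and an arbitrary $\t$-periodic feedback $\Th(\cd)$, writing $\ti A\deq A+B\Th$, $\ti C\deq C+D\Th$ and $\Th_P\deq-(N+D^\top PD)^{-1}(B^\top P+D^\top PC)$, a direct computation gives
\begin{equation*}
\dot P+P\ti A+\ti A^\top P+\ti C^\top P\ti C+M+\Th^\top N\Th
=\cR(P)+(\Th-\Th_P)^\top(N+D^\top PD)(\Th-\Th_P),
\end{equation*}
where $\cR(P)$ denotes the left-hand side of \rf{Sect3:Riccati-P}. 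For the implication ``Riccati solvable $\Rightarrow$ stabilizable'', I would take the given $\t$-periodic solution $P(\cd)\ges0$ and the feedback $\Th(\cd)$ from \rf{Sect3:Stabilizer} (so that $\Th=\Th_P$). The identity then shows that $P$ solves the Lyapunov equation \rf{P:stable} for $[\ti A,\ti C]$ with $\L=M+\Th^\top N\Th=E_{\scp\Th}^\top E_{\scp\Th}$. Since Lemma~\ref{lmm:ED-th} makes $[\ti A,\ti C\nid E_{\scp\Th}]$ exactly detectable, Proposition~\ref{prop:detec-stable} yields that $[\ti A,\ti C]$ is mean-square exponentially stable; that is, $\Th$ is a stabilizer and the system is stabilizable. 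This simultaneously settles the easy direction and the final claim of the proposition.

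For the reverse implication I would fix any uniformly positive definite $\t$-periodic $N(\cd)$ and build the periodic solution as a monotone limit of finite-horizon Riccati solutions. Let $\Pi^{(k)}(\cd)$ solve \rf{Sect3:Riccati-P} on $[0,k\t]$ with $\Pi^{(k)}(k\t)=0$; this is precisely the $P$-component of \rf{DRE:P+vP} with $Q=M$, $S=0$, $R=N$, $T=k\t$, so Lemma~\ref{lmm:Sun2017} guarantees that it exists, is continuous, and satisfies $\Pi^{(k)}\ges0$. Reading $\langle\Pi^{(k)}(t)x,x\rangle$ as the value of the corresponding LQ problem on $[t,k\t]$ and invoking the dynamic programming principle together with $\Pi^{(k+1)}(k\t)\ges0$ gives monotonicity, $\Pi^{(k)}(t)\les\Pi^{(k+1)}(t)$. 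The $\t$-periodicity of the coefficients, combined with uniqueness for the backward Riccati equation, yields the shift identity $\Pi^{(k)}(t)=\Pi^{(k+1)}(t+\t)$. Finally, feeding the stabilizing feedback $\Th_0$ supplied by stabilizability into the LQ cost and using the exponential decay of the (periodic, hence uniformly-in-initial-time stable) closed-loop system bounds the cost by $c|x|^2$, whence $0\les\Pi^{(k)}(t)\les cI_n$ uniformly in $k$ and $t$.

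Monotonicity and boundedness then give a pointwise limit $P(\cd)\ges0$. Because the uniform bound forces $\dot\Pi^{(k)}$ to be locally uniformly bounded through \rf{Sect3:Riccati-P}, the family is locally equicontinuous, so the convergence is in fact locally uniform (Arzel\`a--Ascoli); passing to the limit in the integral form of \rf{Sect3:Riccati-P} shows that $P$ is $C^1$ and solves \rf{Sect3:Riccati-P}, and letting $k\to\i$ in the shift identity gives $P(t)=P(t+\t)$, i.e. the desired $\t$-periodic solution.

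For uniqueness, let $P_1,P_2\ges0$ be two $\t$-periodic solutions with feedbacks $\Th_1,\Th_2$ from \rf{Sect3:Stabilizer}; by the first paragraph each $[\ti A_i,\ti C_i]$ with $\ti A_i=A+B\Th_i$, $\ti C_i=C+D\Th_i$ is mean-square exponentially stable. Applying the completion-of-squares identity to $P_1$ with feedback $\Th_2$ and subtracting the Lyapunov equation satisfied by $P_2$ with its own feedback, the difference $\D\deq P_1-P_2$ obeys
\begin{equation*}
\dot\D+\D\ti A_2+\ti A_2^\top\D+\ti C_2^\top\D\ti C_2
=(\Th_2-\Th_1)^\top(N+D^\top P_1D)(\Th_2-\Th_1)\ges0.
\end{equation*}
Testing this against the closed-loop flow of $[\ti A_2,\ti C_2]$ via It\^o's formula shows that $t\mapsto\dbE\langle\D(t)X(t),X(t)\rangle$ is non-decreasing and tends to $0$ as $t\to\i$, forcing $\D(0)\les0$; exchanging the roles of $P_1,P_2$ gives $\D(0)\ges0$, so $P_1(0)=P_2(0)$ and ODE uniqueness yields $P_1\equiv P_2$. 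I expect the genuine obstacle to be the reverse implication: securing the bound on $\Pi^{(k)}$ that is uniform both in $k$ and in the initial time (which is where periodicity of the stabilized system is essential) and then upgrading the monotone pointwise limit to a bona fide periodic $C^1$ solution of the Riccati equation; once local uniform convergence is in hand, periodicity drops out of the shift identity with no extra work.
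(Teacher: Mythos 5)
Your proposal is correct, but the necessity direction takes a genuinely different route from the paper. The sufficiency direction (and with it the final ``stabilizer'' claim) coincides with the paper's argument: completion of squares with $\Th=\Th_P$, identification of $M+\Th^\top N\Th$ with $E_{\scp\Th}^\top E_{\scp\Th}$, then \autoref{lmm:ED-th} and \autoref{prop:detec-stable}. For necessity, the paper runs a Kleinman--Newton iteration: starting from a stabilizer $\Th_0$ it alternately solves periodic Lyapunov equations for $P_i$ and updates the feedback to $\Th_{i+1}$, proving at each step (via the same detectability machinery) that the new closed loop is stable, and then cites \cite[Proposition 3.8]{Sun-Yong2024:JDE} for the convergence of $\{P_i\}$ and $\{\Th_i\}$ to the periodic Riccati solution and a stabilizer. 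You instead build $P$ as the monotone, uniformly bounded limit of the finite-horizon solutions $\Pi^{(k)}$ with zero terminal data at $k\t$, using the value-function interpretation for monotonicity and the stabilizing feedback for the uniform bound, and extract periodicity from the shift identity $\Pi^{(k)}(t)=\Pi^{(k+1)}(t+\t)$; this is essentially the technique the paper itself deploys later for $\vP_{\scT}$ in \autoref{prop:finite--P+vP}, transplanted to Section 3. Your route is more self-contained (it avoids the external citation for convergence of the Newton iterates) and, unlike the paper, you supply an explicit uniqueness proof via the completion-of-squares comparison along the stable closed-loop flow, which the paper leaves implicit in the cited reference; the paper's iteration, on the other hand, produces a sequence of stabilizers converging to $\Th$ directly and is the natural vehicle for the ``for some / for any $N$'' equivalence. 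Both arguments are sound; the only points to make explicit in a written version are that the two-parameter decay estimate for the periodic closed loop (needed for the bound on $\Pi^{(k)}$ uniform in the initial time) comes from \cite[Corollary 3.4]{Sun-Yong2024:JDE}, and that forward ODE uniqueness for the Riccati flow is legitimate because $N+D^\top PD\ges\d I_m$ keeps the right-hand side locally Lipschitz on $\bar\dbS_+^n$.
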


\begin{proof}
{\it Sufficiency.}
Suppose that \rf{Sect3:Riccati-P} admits a $\t$-periodic solution $P(\cd)\in C([0,\i);\bar\dbS_+^n)$.
Substituting \rf{Sect3:Stabilizer} into \rf{Sect3:Riccati-P} yields
$$
\dot{P}+(A+B\Th)^{\top}P+P(A+B\Th)+(C+D\Th)^{\top}P(C+D\Th)+M+\Th^{\top}N\Th=0.
$$
Let $E_{\scp\Th}(t)\deq(E(t)^{\top},\Th(t)^{\top}N(t)^{1/2})^{\top}$.
Then, by \autoref{lmm:ED-th}, $[A(\cd)+B(\cd)\Th(\cd),C(\cd)+D(\cd)\Th(\cd)\nid E_{\scp\Th}(\cd)]$
is exactly detectable.
Further, since
$$
E_{\scp\Th}(t)^{\top}E_{\scp\Th}(t)=M(t)+\Th(t)^{\top}N(t)\Th(t),
$$
it follows from \autoref{prop:detec-stable} that $[A(\cd)+B(\cd)\Th(\cd),C(\cd)+D(\cd)\Th(\cd)]$
is mean-square exponentially stable.

\ms
	
{\it Necessity.}
Let $\Th_0(\cd)$ be a stabilizer of system $[A(\cd),C(\cd);B(\cd),D(\cd)]$
so that $[A(\cd)+B(\cd)\Th_0(\cd),C(\cd)+D(\cd)\Th_0(\cd)]$ is mean-square exponentially stable.
Then by \cite[Proposition 3.3(i)]{Sun-Yong2024:JDE}, the differential equation
\begin{equation}\label{Sect3:Proof2}
\dot P_0+P_0(A+B\Th_0)+(A+B\Th_0)^{\top}P_0+(C+D\Th_0)^{\top}P_0(C+D\Th_0)+M+\Th_0^{\top}N\Th_0=0
\end{equation}
admits a unique $\t$-periodic solution $P_0(\cd)\in C([0,\i);\bar\dbS_+^n)$. Set
$$
\Th_1(t)\deq -[N(t)+D(t)^{\top}P_0(t)D(t)]^{-1}[B(t)^{\top}P_0(t)+D(t)^{\top}P_0(t)C(t)].
$$
Then \rf{Sect3:Proof2} can be written as
\begin{equation}\label{Sect3:Proof3}
\begin{aligned}
&\dot P_0 + P_0(A+B\Th_1) + (A+B\Th_1)^{\top}P_0 + (C+D\Th_1)^{\top}P_0(C+D\Th_1) \\
&\hp{P_0} + M + \Th_1^{\top}N\Th_1+(\Th_1-\Th_0)^\top(N+D^{\top}P_0D)(\Th_1-\Th_0)=0.
\end{aligned}
\end{equation}
Set
$$
F_1(t) \deq \begin{pmatrix*}E(t)\\N(t)^{1/2}\Th_1(t)\end{pmatrix*},\q
E_{\scp\Th_1}(t) \deq
\begin{pmatrix*}F_1(t) \\ [N(t)+D(t)^{\top}P_0(t)D(t)]^{1/2}[\Th_1(t)-\Th_0(t)]\end{pmatrix*}.
$$
Since $[A(\cd)+B(\cd)\Th_{0}(\cd),C(\cd)+D(\cd)\Th_{0}(\cd)]$ is mean-square
exponentially stable, $[A(\cd)+B(\cd)\Th_{0}(\cd),C(\cd)+D(\cd)\Th_{0}(\cd)\nid F_1(\cd)]$
is exactly detectable by \autoref{remark:detec}. Observing that
$$
A+B\Th_1=A+B\Th_{0}+B(\Th_1-\Th_0), \q C+D\Th_1=C+D\Th_0+D(\Th_1-\Th_0),
$$
we see from \autoref{lmm:ED-th} that
$[A(\cd)+B(\cd)\Th_{1}(\cd),C(\cd)+D(\cd)\Th_{1}(\cd)\nid E_{\scp\Th_1}(\cd)]$
is also exactly detectable.
Further, noting that
$$
E_{\scp\Th_1}^\top E_{\scp\Th_1} = M + \Th_1^{\top}N\Th_1+(\Th_1-\Th_0)^\top(N+D^{\top}P_0D)(\Th_1-\Th_0),
$$
we obtain by \eqref{Sect3:Proof3} and \autoref{prop:detec-stable} that
$[A(\cd)+B(\cd)\Th_{1}(\cd),C(\cd)+D(\cd)\Th_1(\cd)]$ is mean-square exponentially stable.
Inductively, for $i=1,2,\cdots$, set
\begin{align*}
& \Th_i(t) \deq -[N(t)+D(t)^{\top}P_{i-1}(t)D(t)]^{-1}[B(t)^{\top}P_{i-1}(t)+D(t)^{\top}P_{i-1}(t)C(t)], \\
&   A_i(t) \deq A(t)+B(t)\Th_i(t), \q C_i(t) \deq C(t)+D(t)\Th_i(t),
\end{align*}
and let $P_i(\cd)\in C([0,\i);\bar\dbS_+^n)$ be the unique $\t$-periodic
 positive semi-definite solution to the following equation:
\begin{equation}\label{Sect3:Proof4}
\dot{P}_i + P_iA_i + A_i^{\top}P_i + C_i^{\top}P_iC_i + M + \Th_i^{\top}N\Th_i = 0.
\end{equation}
A similar argument shows that $[A_i(\cd),C_i(\cd)]$ is mean-square
exponentially stable for all $i\ges1$.
Then, proceeding similarly to the proof of Proposition 3.8 in \cite{Sun-Yong2024:JDE},
we can further show that the two sequences $\{P_i(\cd)\}$ and $\{\Th_i(\cd)\}$ are
pointwise convergent, with the limit of $\{P_i(\cd)\}$ being the unique $\t$-periodic
positive semi-definite solution to \rf{Sect3:Riccati-P} and the limit of
$\{\Th_i(\cd)\}$ being a stabilizer of $[A(\cd),C(\cd);B(\cd),D(\cd)]$.
\end{proof}

\section{Exponential turnpike property of Riccati equations}\label{Sec:SRE}

Recall from \autoref{lmm:Sun2017} that under \ref{A1}--\ref{A2}, the system of
differential Riccati equations \rf{DRE:P+vP} admits a unique positive semi-definite
solution pair $(P_{\scT}(\cd),\vP_{\scT}(\cd))$.
It is shown in Sun--Yong \cite{Sun-Yong2024:JDE} that when the system $[A(\cd),C(\cd);B(\cd),D(\cd)]$
is mean-square exponentially stabilizable and
\begin{align}\label{cdtn:Sun-Yong2024}
Q(\cd)-S(\cd)^{\top}R(\cd)^{-1}S(\cd)
\end{align}
is uniformly positive definite, the differential Riccati equation
\begin{equation}\label{RE:Inf-P}\begin{aligned}
&\dot{P}+PA+A^{\top}P+C^{\top}PC+Q \\
&\hp{P} -(PB+C^{\top}PD+S^{\top})(R+D^{\top}PD)^{-1}(B^{\top}P+D^{\top}PC+S)=0
\end{aligned}\end{equation}
admits a unique $\t$-periodic, uniformly positive definite solution $P(\cd)\in C([0,\i);\dbS_+^n)$.
Moreover, $P_{\scT}(\cd)$ has the following exponential turnpike property:
For some constants $K,\l>0$ independent of $T$,
$$
|P_{\scT}(t)-P(t)| \les Ke^{-\l(T-t)}, \q\forall t\in[0,T].
$$
In this section, we introduce a condition weaker than \rf{cdtn:Sun-Yong2024}
and show that not only $P_{\scT}(\cd)$ but also $\vP_{\scT}(\cd)$ exhibits
the exponential turnpike property.

\ms

Let us introduce the condition first.

\begin{taggedassumption}{(A3)}\label{A3}
The systems
$$
[A(\cd),C(\cd);B(\cd),D(\cd)] \q\text{and}\q [\hA(\cd),0;\hB(\cd),0]
$$
are both mean-square exponentially stabilizable. Additionally, the systems
$$
[(A-BR^{-1}S)(\cd),(C-DR^{-1}S)(\cd)\nid(Q-S^{\top}R^{-1}S)^{1/2}(\cd)]
$$
and
$$
[(\hA-\hB\hR^{-1}\hS)(\cd),0\nid(\hQ-\hS^{\top}\hR^{-1}\hS)^{1/2}(\cd)]
$$
are both exactly detectable.
\end{taggedassumption}

\begin{remark}
Clearly, when \rf{cdtn:Sun-Yong2024} holds, the system $[(A-BR^{-1}S)(\cd),(C-DR^{-1}S)(\cd)\nid(Q-S^{\top}R^{-1}S)^{1/2}(\cd)]$
is exactly detectable. However, the converse is not true in general.
\end{remark}

\begin{proposition}\label{prop:periodi-Ric}
Let {\rm\ref{A1}--\ref{A3}} hold. Then the differential Riccati equation
\rf{RE:Inf-P} admits a unique $\t$-periodic positive semi-definite solution
$P(\cd)\in C([0,\i);\bar\dbS_+^n)$, and the function $\Th(\cd)$ defined by
\begin{equation}\label{S4:Stabilizer}
\Th(t)\deq-[R(t)+D(t)^{\top}P(t)D(t)]^{-1}[B(t)^{\top}P(t)+D(t)^{\top}P(t)C(t)+S(t)]
\end{equation}
is a stabilizer of $[A(\cd),C(\cd);B(\cd),D(\cd)]$.
Moreover, the differential Riccati equation
\begin{equation}\label{RE:Inf-vP}\begin{aligned}
&\dot{\vP}+\vP\hA+\hA^{\top}\vP+\hC^{\top}P\hC+\hQ\\
&\hp{\vP} -(\vP\hB+\hC^{\top}P\hD+\hS^\top)(\hR+\hD^{\top}P\hD)^{-1}(\hB^\top\vP+\hD^{\top}P\hC+\hS)=0
\end{aligned}\end{equation}
also admits a unique $\t$-periodic positive semi-definite solution $\vP(\cd)\in C([0,\i);\bar\dbS_+^n)$,
and the function $\wh\Th(\cd)$ defined by
\begin{equation}\label{STBL:whTh}
\wh\Th(t)\deq -[\hR(t)+\hD(t)^{\top}P(t)\hD(t)]^{-1}[\hB(t)^{\top}\vP(t)+\hD(t)^{\top}P(t)\hC(t)+\hS(t)]
\end{equation}
is a stabilizer of $[\hA(\cd),0\,;\hB(\cd),0]$.
\end{proposition}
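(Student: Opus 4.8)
The plan is to deduce both assertions from \autoref{prop:P-inf} by rewriting each Riccati equation, after eliminating its cross term through a completion of squares, as the standard stabilizing Riccati equation \rf{Sect3:Riccati-P} of a system whose detectability is furnished by \ref{A3}. Throughout, uniqueness and positive semi-definiteness of the solutions will come for free from \autoref{prop:P-inf}, so the work is entirely in checking its two hypotheses (stabilizability and exact detectability) for the transformed systems.

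For \rf{RE:Inf-P} I would introduce the reduced coefficients
$$\wt A \deq A - BR^{-1}S, \q \wt C \deq C - DR^{-1}S, \q \wt Q \deq Q - S^\top R^{-1}S,$$
and verify by direct expansion that \rf{RE:Inf-P} is \emph{identical} to equation \rf{Sect3:Riccati-P} for the system $[\wt A(\cd),\wt C(\cd);B(\cd),D(\cd)]$ with $M=\wt Q$ and $N=R$. The whole point of this substitution is that the observed system $[\wt A,\wt C\nid\wt Q^{1/2}]$ is exactly the one assumed exactly detectable in \ref{A3}, and that $[\wt A,\wt C;B,D]$ inherits mean-square exponential stabilizability from $[A,C;B,D]$: if $\Th_0$ stabilizes the latter, then $\Th_0+R^{-1}S$ stabilizes the former, since $\wt A+B(\Th_0+R^{-1}S)=A+B\Th_0$ and $\wt C+D(\Th_0+R^{-1}S)=C+D\Th_0$ make the two closed loops coincide. \autoref{prop:P-inf} then yields the unique $\t$-periodic $P(\cd)\in C([0,\i);\bar\dbS_+^n)$, and the same feedback shift converts its stabilizer into the map \rf{S4:Stabilizer}, which therefore stabilizes $[A,C;B,D]$.

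For \rf{RE:Inf-vP}, with $P(\cd)$ now fixed, I would absorb the $P$-terms into the effective weights $\hQ_P\deq\hQ+\hC^\top P\hC$, $\hS_P\deq\hS+\hD^\top P\hC$, $\hR_P\deq\hR+\hD^\top P\hD$ (the last $\t$-periodic and uniformly positive definite, as $\hR_P\ges\hR$), read \rf{RE:Inf-vP} as a Riccati equation for the deterministic system $[\hA,0;\hB,0]$, and complete the square to kill the cross term $\hS_P$, producing the reduced drift $G\deq\hA-\hB\hR_P^{-1}\hS_P$ and state weight $H\deq\hQ_P-\hS_P^\top\hR_P^{-1}\hS_P$. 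Stabilizability of $[G,0;\hB,0]$ follows from that of $[\hA,0;\hB,0]$ exactly as above, so everything reduces to showing that $[G,0\nid H^{1/2}]$ is exactly detectable. This is the main obstacle: \ref{A3} only supplies detectability of the \emph{un-shifted} system $[G_0,0\nid H_0^{1/2}]$, where $G_0\deq\hA-\hB\hR^{-1}\hS$ and $H_0\deq\hQ-\hS^\top\hR^{-1}\hS$, and both the drift and the observation have been perturbed by $P$.

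To clear this obstacle I would argue directly on unobservable trajectories via the Schur-complement structure of the weights. Writing $M_P\deq\big(\begin{smallmatrix}\hQ_P&\hS_P^\top\\\hS_P&\hR_P\end{smallmatrix}\big)=M_0+\big(\begin{smallmatrix}\hC^\top\\\hD^\top\end{smallmatrix}\big)P\big(\begin{smallmatrix}\hC&\hD\end{smallmatrix}\big)$ with $M_0\deq\big(\begin{smallmatrix}\hQ&\hS^\top\\\hS&\hR\end{smallmatrix}\big)\ges0$, the identity $x^\top Hx=\min_y\lan M_P\binom{x}{y},\binom{x}{y}\ran$ evaluated at the minimizer $y^\ast=-\hR_P^{-1}\hS_P x$ displays $x^\top Hx$ as a sum of the two nonnegative terms $\lan M_0\binom{x}{y^\ast},\binom{x}{y^\ast}\ran$ and $|P^{1/2}(\hC x+\hD y^\ast)|^2$. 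Hence $Hx=0$ forces both to vanish; vanishing of the first, with $M_0\ges0$, gives $M_0\binom{x}{y^\ast}=0$, whose lower block yields $y^\ast=-\hR^{-1}\hS x$ and whose upper block yields $H_0x=0$. Comparing the two expressions for $y^\ast$ shows $\hR_P^{-1}\hS_P x=\hR^{-1}\hS x$, so $Gx=G_0x$ on $\ker H$. Consequently every unobservable trajectory of $[G,0\nid H^{1/2}]$ solves $\dot X=G_0X$ with $H_0^{1/2}X\equiv0$, i.e.\ is an unobservable trajectory of $[G_0,0\nid H_0^{1/2}]$, which \ref{A3} sends to $0$. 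This establishes the required detectability, after which \autoref{prop:P-inf} delivers the unique $\t$-periodic $\vP(\cd)\in C([0,\i);\bar\dbS_+^n)$; undoing the cross-term shift (now $\wh\Th=-\hR_P^{-1}\hB^\top\vP-\hR_P^{-1}\hS_P$, so that $\hA+\hB\wh\Th=G+\hB(-\hR_P^{-1}\hB^\top\vP)$) identifies the stabilizer with \rf{STBL:whTh} and shows it stabilizes $[\hA,0;\hB,0]$.
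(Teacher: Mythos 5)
Your proof is correct, and its overall architecture coincides with the paper's: both halves are reduced to \autoref{prop:P-inf} by eliminating the cross term, with stabilizability carried over through the feedback shift $\Th_0\mapsto\Th_0+R^{-1}S$ (resp.\ its mean-field analogue), and your treatment of \rf{RE:Inf-P} is essentially verbatim what the paper does. The one place you genuinely diverge is at the crux of the second half, namely the exact detectability of the shifted observed system $[G(\cd),0\nid H(\cd)^{1/2}]$. The paper establishes this by explicitly factoring $H=\wh\cH$ as $\cE^{\top}\cE$, i.e.\ as the sum of the three positive semi-definite terms $\hQ-\hS^{\top}\hR^{-1}\hS$, $(\hC-\hD\wh\cR^{-1}\wh\cS)^{\top}P(\hC-\hD\wh\cR^{-1}\wh\cS)$, and $(\hR^{-1}\hS-\wh\cR^{-1}\wh\cS)^{\top}\hR(\hR^{-1}\hS-\wh\cR^{-1}\wh\cS)$, and then invoking \autoref{lmm:ED-th} with a block substitution that exhibits $G$ as a feedback perturbation of $G_0=\hA-\hB\hR^{-1}\hS$ and $\cE$ as the corresponding augmented observation. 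You obtain the same three-term decomposition from the variational identity $x^{\top}Hx=\min_y\lan M_P\binom{x}{y},\binom{x}{y}\ran$ (using $M_0\ges0$, which indeed follows from \ref{A2} by the Schur complement) and then argue directly on $\ker H$, showing $Gx=G_0x$ and $H_0x=0$ there, so that every unobservable trajectory of the new system is an unobservable trajectory of the old one. Algebraically the two arguments are the same decomposition; yours is self-contained in that it re-proves the relevant instance of \autoref{lmm:ED-th} rather than citing it with a somewhat unobvious substitution, at the cost of a bit more hands-on kernel analysis. I see no gap in either half.
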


\begin{proof}
Since $[A(\cd),C(\cd);B(\cd),D(\cd)]$ is mean-square exponentially stabilizable,
so is $[(A-BR^{-1}S)(\cd),(C-DR^{-1}S)(\cd);B(\cd),D(\cd)]$.
Further, since $[(A-BR^{-1}S)(\cd),(C-DR^{-1}S)(\cd)\nid(Q-S^{\top}R^{-1}S)^{1/2}(\cd)]$
is exactly detectable, we have by \autoref{prop:P-inf} that the differential equation
\begin{equation}\label{proof-prop4.2-1}\begin{aligned}
&\dot{P}+P(A-BR^{-1}S)+(A-BR^{-1}S)^{\top}P \\
&\hp{P} +(C-DR^{-1}S)^{\top}P(C-DR^{-1}S)+ Q-S^{\top}R^{-1}S \\
&\hp{P} -[PB+(C-DR^{-1}S)^{\top}PD](R+D^{\top}PD)^{-1}[B^{\top}P+D^{\top}P(C-DR^{-1}S)]=0
\end{aligned}\end{equation}
admits a unique $\t$-periodic solution $P(\cd)\in C([0,\i);\bar\dbS_+^n)$,
and the function $\G(\cd)$ defined by
\begin{align*}
\G &\deq-(R+D^{\top}PD)^{-1}[B^{\top}P+D^{\top}P(C-DR^{-1}S)] = \Th + R^{-1}S
\end{align*}
is a stabilizer of $[(A-BR^{-1}S)(\cd),(C-DR^{-1}S)(\cd);B(\cd),D(\cd)]$.
By straightforward simplification, it can be seen that \rf{proof-prop4.2-1} is exactly \rf{RE:Inf-P},
and that the function $\Th(\cd)$ defined by \rf{S4:Stabilizer} is a stabilizer
of $[A(\cd),C(\cd);B(\cd),D(\cd)]$.

\ms

Next, we let
\begin{alignat*}{4}
\wh\cQ(\cd) &\deq \hC(\cd)^{\top}P(\cd)\hC(\cd)+\hQ(\cd), &\q
\wh\cS(\cd) &\deq \hD(\cd)^{\top}P(\cd)\hC(\cd)+\hS(\cd), \\
\wh\cR(\cd) &\deq \hR(\cd)+\hD(\cd)^{\top}P(\cd)\hD(\cd), &\q
\wh\cH(\cd) &\deq \wh\cQ(\cd)-\wh\cS(\cd)^{\top}\wh\cR(\cd)^{-1}\wh\cS(\cd).
\end{alignat*}
It is straightforward to verify that
\begin{align*}
\wh\cH &= \hQ-\hS^{\top}\hR^{-1}\hS +(\hC-\hD\wh\cR^{-1}\wh\cS)^{\top}P(\hC-\hD\wh\cR^{-1}\wh\cS) \\
&\hp{=\ } +(\hR^{-1}\hS- \wh\cR^{-1}\wh\cS)^{\top} \hR(\hR^{-1}\hS-\wh\cR^{-1}\wh\cS)  \\
&= \cE^{\top}\cE,
\end{align*}
where
$$
\cE(\cd) \deq \begin{pmatrix*}
\big[\hQ(\cd)-\hS(\cd)^{\top}\hR(\cd)^{-1}\hS(\cd)\big]^{1/2} \\
P(\cd)^{1/2}\big[\hC(\cd)-\hD(\cd)\wh\cR(\cd)^{-1}\wh\cS(\cd)\big] \\
\hR(\cd)^{1/2}\big[\hR(\cd)^{-1}\hS(\cd)-\wh\cR(\cd)^{-1}\wh\cS(\cd)\big] \end{pmatrix*}.
$$
Note that
\begin{align*}
\hA-\hB\wh\cR^{-1}\wh\cS = \hA-\hB\hR^{-1}\hS +
\begin{pmatrix*}0 & \hB\end{pmatrix*}
\begin{pmatrix*}P^{1/2}(\hC-\hD\wh\cR^{-1}\wh\cS) \\ \hR^{-1}\hS-\wh\cR^{-1}\wh\cS\end{pmatrix*}.
\end{align*}
Then, by replacing the matrix functions $N$, $\Th$, $E$, $A$, $C$, $B$, and $D$ in \autoref{lmm:ED-th} with
$\diag(I_n,\hR)$, $\begin{pmatrix*}P^{1/2}(\hC-\hD\wh\cR^{-1}\wh\cS) \\ \hR^{-1}\hS-\wh\cR^{-1}\wh\cS\end{pmatrix*}$,
$\big(\hQ-\hS^{\top}\hR^{-1}\hS\big)^{1/2}$, $\hA-\hB\hR^{-1}\hS$, $0$,
$\begin{pmatrix*}0 & \hB\end{pmatrix*}$, and $0$, respectively,
we see that $[(\hA-\hB\cR^{-1}\wh\cS)(\cd),0\nid \cE(\cd)]$ is exactly detectable.
Now applying \autoref{prop:P-inf} to $[(\hA-\hB\cR^{-1}\wh\cS)(\cd),0\nid \cE(\cd)]$
and proceeding similarly to the previous proof for $P(\cd)$, we obtain the
desired conclusion for $\vP(\cd)$.
\end{proof}

In the rest of this section, let $(P_{\scT}(\cd),\vP_{\scT}(\cd))$ be the unique
solution pair of \rf{DRE:P+vP} and $(P(\cd),\vP(\cd))$ be as in \autoref{prop:periodi-Ric}.
Using a similar argument employed in the proof of \cite[Theorem 5.6]{Sun-Yong2024:JDE},
we can obtain the following result, noting that the condition here is slightly weaker
than that of \cite[Theorem 5.6]{Sun-Yong2024:JDE}.

\begin{proposition}\label{prop:PT-P}
Let {\rm\ref{A1}--\ref{A3}} hold. There exist constants $K,\l>0$, independent of $T$,
such that
$$
\big|P_{\scT}(t)-P(t)\big| \les Ke^{-\l(T-t)},\q\forall t\in[0,T].
$$
\end{proposition}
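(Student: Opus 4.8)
The plan is to read both Riccati solutions as value functions of linear-quadratic problems and to identify $P(t)-P_{\scT}(t)$ with an exponentially small tail cost. Consider the homogeneous controlled system $dX=(AX+Bu)dt+(CX+Du)dW$ with running cost $\ell(X,u)\deq\langle QX,X\rangle+2\langle SX,u\rangle+\langle Ru,u\rangle$; by \ref{A2} and a Schur-complement argument, $\ell\ge0$. The stabilizer $\Th(\cd)$ from \rf{S4:Stabilizer}, furnished by \autoref{prop:periodi-Ric}, renders the closed-loop system $[A+B\Th,C+D\Th]$ mean-square exponentially stable, and this is the feedback around which I will expand. The point of working with $\Th$, rather than the finite-horizon feedback $\Th_{\scT}$, is that its decay constants are $\t$-periodic data and hence $T$-independent.

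First I would record the closed-loop (completion-of-squares) form of \rf{RE:Inf-P}, namely
$$
\dot P+P(A+B\Th)+(A+B\Th)^\top P+(C+D\Th)^\top P(C+D\Th)+Q+S^\top\Th+\Th^\top S+\Th^\top R\Th=0.
$$
Applying It\^o's formula to $\langle P(s)X^*(s),X^*(s)\rangle$ along the closed-loop trajectory $X^*(\cd)$ starting from $X^*(t)=x$ and taking expectations, the drift collapses to $-\ell(X^*,\Th X^*)$; letting the horizon tend to infinity, the boundary term $\dbE\langle P(s)X^*(s),X^*(s)\rangle$ vanishes because $P$ is bounded and $\dbE|X^*(s)|^2\to0$, giving $\langle P(t)x,x\rangle=\dbE\int_t^\i\ell(X^*,\Th X^*)\,ds$; the same computation for an arbitrary stabilizing control shows this is the infinite-horizon infimum. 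Repeating the computation for $P_{\scT}$ on $[t,T]$ with $P_{\scT}(T)=0$, still along the feedback $\Th$ (which is now suboptimal), the extra completion-of-squares term has the favorable sign, so $\langle P_{\scT}(t)x,x\rangle\le\dbE\int_t^T\ell(X^*,\Th X^*)\,ds$.

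Subtracting the two relations, the common part over $[t,T]$ cancels and I obtain the two-sided estimate
$$
0\le\langle P(t)x,x\rangle-\langle P_{\scT}(t)x,x\rangle\le\dbE\int_T^\i\ell(X^*(s),\Th(s)X^*(s))\,ds,
$$
the lower bound following from $\ell\ge0$ via $\langle P_{\scT}(t)x,x\rangle\le\dbE\int_t^T\ell\le\dbE\int_t^\i\ell=\langle P(t)x,x\rangle$. Since the coefficients and $\Th$ are bounded, $\ell(X^*,\Th X^*)\le c|X^*|^2$, while mean-square exponential stability yields $\dbE|X^*(s)|^2\le Ke^{-\l(s-t)}|x|^2$ with $K,\l$ independent of $t$ and $T$. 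Integrating the tail gives $\langle(P(t)-P_{\scT}(t))x,x\rangle\le\frac{cK}{\l}e^{-\l(T-t)}|x|^2$, and because $P(t)-P_{\scT}(t)$ is symmetric positive semi-definite its Frobenius norm is bounded by a dimensional constant times $\sup_{|x|=1}\langle(P-P_{\scT})x,x\rangle$, which is exactly the asserted bound after renaming constants.

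The main obstacle is the rigorous justification of the two value-function representations, above all the vanishing of the boundary term and the convergence of the infinite-horizon cost integral. Both rest on mean-square exponential stability of the closed loop under $\Th$ with decay constants $K,\l$ that are uniform in the initial time $t$ (and hence in $T$); extracting this uniformity from mere $\t$-periodicity, rather than from a single time window, is the delicate point, and it is supplied by \autoref{prop:periodi-Ric} together with the periodic Lyapunov characterization in \autoref{remark:Sun-Yong2024}. Once this uniform decay is secured, the completion of squares, It\^o's formula, and the tail integration are all routine; in particular this value-function route avoids having to control the feedback gap $\Th-\Th_{\scT}$ that a direct linear-matrix-equation representation of $P_{\scT}-P$ would force upon us.
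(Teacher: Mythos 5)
Your value-function framing is sound up to the subtraction step, and the two completion-of-squares computations you describe are individually correct: $\lan P(t)x,x\ran=\dbE\int_t^\i\ell(X^*,\Th X^*)\,ds$ holds exactly, and $\lan P_{\scT}(t)x,x\ran\les\dbE\int_t^{T}\ell(X^*,\Th X^*)\,ds$ holds by suboptimality of $\Th$ on the finite horizon. But subtracting an \emph{upper bound} for $\lan P_{\scT}(t)x,x\ran$ from an exact value for $\lan P(t)x,x\ran$ yields a \emph{lower} bound on the difference, not an upper bound: what actually follows is $\lan(P(t)-P_{\scT}(t))x,x\ran\ges\dbE\int_{T}^\i\ell(X^*,\Th X^*)\,ds$, the reverse of your claimed two-sided estimate. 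Keeping the completion-of-squares remainder instead of discarding it gives the exact identity
$$
\lan(P(t)-P_{\scT}(t))x,x\ran=\dbE\int_{T}^\i\ell(X^*,\Th X^*)\,ds
+\dbE\int_t^{T}\big|(R+D^\top P_{\scT}D)^{1/2}(\Th-\Th_{\scT})X^*\big|^2\,ds,
$$
so your asserted upper bound is false unless the second integral vanishes. That second integral is precisely the feedback gap you claim this route avoids; since $|\Th-\Th_{\scT}|$ is controlled only through $|P-P_{\scT}|$ itself, estimating it produces a quadratic integral inequality of the form $f(t)\les Ke^{-\l(T-t)}+K\int_t^{T}e^{-\l(s-t)}f(s)^2\,ds$ for $f=|P-P_{\scT}|$, which is exactly the self-referential difficulty the argument was meant to bypass.

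This is the substantive missing piece, and it is where the paper's own route does real work: for the companion function $\vP_{\scT}$ in \autoref{thm:vP-vPT} (and, by citation to Sun--Yong for $P_{\scT}$ itself), one writes the Lyapunov-type ODE for the difference, applies the variation-of-constants formula along the exponentially stable closed-loop flow, uses the monotone convergence $P_{\scT}\uparrow P$ together with periodicity (\autoref{prop:finite--P+vP}) to make the difference small at a fixed distance $N$ from the terminal time, and then absorbs the quadratic term with a nonlinear Gronwall lemma. If you prefer to stay with the value-function picture, the standard repair is to concatenate the finite-horizon optimal control with the stabilizing feedback after time $T$, giving $\lan P(t)x,x\ran\les\lan P_{\scT}(t)x,x\ran+\dbE\lan P(T)\bar X_{\scT}(T),\bar X_{\scT}(T)\ran$; but then you must prove $\dbE|\bar X_{\scT}(T)|^2\les Ke^{-\l(T-t)}|x|^2$ uniformly in $T$, and under the exact detectability hypothesis \ref{A3} (rather than uniform positivity of $Q-S^\top R^{-1}S$) this decay of the finite-horizon optimal trajectory is essentially the turnpike statement itself. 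Either way, an additional substantive argument is required at the point where your proof currently asserts the conclusion.
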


Next, we shall show that a similar estimate holds for $|\vP_{\scT}(\cd)-\vP(\cd)|$.
In preparation, we present the following result first.

\begin{proposition}\label{prop:finite--P+vP}
Let {\rm\ref{A1}--\ref{A3}} hold.
\begin{enumerate}[\rm(i)]
\item $\vP_{\scT}(\cd)$ is nondecreasing with respect to $T$, that is,
      $$
      \vP_{\scT_1}(t)\les \vP_{\scT_2}(t), \q \forall 0\les t\les T_1\les T_2 <\i.
      $$
\item For any $0\les t \les T <+\i$,
      $$
      \vP_{\scT+\tau}(t+\t)=\vP_{\scT}(t).
      $$
\item $\lim_{T\to\i}\vP_{\scT}(t)=\vP(t)$ for all $t\ges0$.
\end{enumerate}
\end{proposition}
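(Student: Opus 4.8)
The plan is to establish the three assertions in sequence, since (iii) depends on both (i) and (ii). The structural observation driving the whole argument is that once $P_{\scT}(\cd)$ is known, the equation \rf{RE:Inf-vP} for $\vP_{\scT}(\cd)$ is itself a standard (non-coupled) differential Riccati equation with coefficients $\hA$, $\hB$ in the drift, zero diffusion, and weighting data $\wh\cQ_{\scT}\deq\hC^\top P_{\scT}\hC+\hQ$, $\wh\cS_{\scT}\deq\hD^\top P_{\scT}\hC+\hS$, $\wh\cR_{\scT}\deq\hR+\hD^\top P_{\scT}\hD$, all terminating at $0$. This is exactly the kind of Riccati equation whose monotonicity in the time horizon is classical.

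For part (i), I would first fix $P_{\scT}(\cd)$ and recognize $\vP_{\scT}(\cd)$ as the value-function kernel of a deterministic LQ problem (governed by $[\hA,0;\hB,0]$ with the weights above). Monotonicity in $T$ then follows from a comparison argument: extend the shorter horizon by padding with the zero control on $[T_1,T_2]$, or equivalently invoke the standard result that the solution of a Riccati equation with nonnegative data and zero terminal condition is nondecreasing as the terminal time increases. Concretely, I would differentiate the difference $\vP_{\scT_2}-\vP_{\scT_1}$ and show it satisfies a linear Lyapunov-type inequality with nonnegative forcing, so that the comparison principle gives $\vP_{\scT_1}(t)\les\vP_{\scT_2}(t)$. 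I should be careful that the forcing is genuinely nonnegative, which uses $\wh\cR_{\scT}>0$ (from \ref{A2}) and the positivity of $P_{\scT}$.

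Part (ii) is the periodicity/shift identity and is the cleanest: since all coefficients are $\t$-periodic and $P_{\scT}(\cd)$ inherits the shift relation $P_{\scT+\t}(t+\t)=P_{\scT}(t)$ by uniqueness in \autoref{lmm:Sun2017} (the two sides solve the same terminal-value problem after the change of variable $s=t+\t$), the function $t\mapsto\vP_{\scT+\t}(t+\t)$ solves the very same terminal-value problem on $[0,T]$ as $\vP_{\scT}(\cd)$. Uniqueness of the solution to \rf{RE:Inf-vP} with terminal condition $0$ then forces them to coincide. I would verify the change-of-variables bookkeeping carefully but expect no real difficulty.

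Part (iii) combines the previous two. By (i), $\{\vP_{\scT}(t)\}$ is nondecreasing in $T$; I then need a uniform upper bound to guarantee a finite pointwise limit. The bound should come from the stabilizability hypotheses in \ref{A3}: plugging the stabilizer $\wh\Th(\cd)$ from \rf{STBL:whTh} into the cost associated with $\vP_{\scT}$ yields a uniformly bounded value (because $[\hA,0;\hB,0]$ is mean-square exponentially stabilizable and the integrand decays exponentially), which caps $\vP_{\scT}$ independently of $T$. Monotone convergence then produces a limit $\vP^\ast(\cd)$. Finally I must identify $\vP^\ast$ with the $\vP$ of \autoref{prop:periodi-Ric}: passing to the limit in \rf{RE:Inf-vP} (using \autoref{prop:PT-P} so that $P_{\scT}\to P$ and hence the coefficients converge) shows $\vP^\ast$ solves \rf{RE:Inf-vP}, and the shift identity (ii) shows $\vP^\ast$ is $\t$-periodic; uniqueness in \autoref{prop:periodi-Ric} then gives $\vP^\ast=\vP$. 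The main obstacle will be the uniform bound in (iii): I expect it to require a genuine stabilizability estimate rather than a soft compactness argument, and care is needed to pass to the limit in the nonlinear (quadratic-in-$\vP$) term of the Riccati equation, where I would use local uniform convergence of $P_{\scT}$ together with the uniform positive-definiteness of $\wh\cR_{\scT}$ to control the inverse $(\hR+\hD^\top P_{\scT}\hD)^{-1}$.
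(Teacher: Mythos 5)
Your overall strategy matches the paper's: part (ii) is proved exactly as you describe (the shifted function solves the same terminal-value problem, and uniqueness concludes), and part (i) is done by writing the linear Lyapunov ODE for $\bar\vP\deq\vP_{\scT_2}-\vP_{\scT_1}$ with a forcing term $\G$ that, after completing the square, equals $(\hC+\hD\wh\Th_2)^{\top}(P_{\scT_2}-P_{\scT_1})(\hC+\hD\wh\Th_2)+(\wh\Th_2-\wh\Th_1)^{\top}(\hR+\hD^{\top}P_{\scT_1}\hD)(\wh\Th_2-\wh\Th_1)$, followed by the variation of constants formula. One point in your write-up needs correcting: the nonnegativity of this forcing does \emph{not} follow from the positivity of $P_{\scT}$ together with $\hR+\hD^\top P_{\scT}\hD>0$, as you state; it requires the \emph{monotonicity} of $P_{\scT}$ in $T$, i.e.\ $P_{\scT_2}\ges P_{\scT_1}$. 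The same issue affects your value-function/padding variant, since the running weights of the two LQ problems depend on $T$ through $P_{\scT}$ and are only comparable once $P_{\scT_2}\ges P_{\scT_1}$ is known. The paper handles this by first noting that $\{P_{\scT}(\cd)\}$ satisfies all three properties of the proposition (by the arguments of \cite[Propositions 5.2 and 5.4]{Sun-Yong2024:JDE}), so the fix is available but must be invoked explicitly before (i) can go through.

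For part (iii), you take a genuinely different route to the uniform upper bound: a stabilizability estimate obtained by plugging the feedback $\wh\Th(\cd)$ into the associated cost. The paper instead reruns the comparison argument of (i) with $\vP_{\scT_1},\vP_{\scT_2}$ replaced by $\vP_{\scT},\vP$, yielding directly $\vP_{\scT}(t)\les\vP(t)$; this is shorter because the periodic solution $\vP(\cd)$ already exists by \autoref{prop:periodi-Ric}, and it dispenses with any separate exponential-decay estimate for the closed-loop system. Your stabilizer-based bound would also work but duplicates effort. The identification of the limit is the same in both: periodicity from (ii), passage to the limit in the integral form of the Riccati equation (the paper uses bounded convergence, with $(\hR+\hD^{\top}P_{\scT}\hD)^{-1}$ controlled exactly as you anticipate), and uniqueness of the $\t$-periodic solution.
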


\begin{proof}
First, using similar arguments as in the proofs of \cite[Propositions 5.2 and 5.4]{Sun-Yong2024:JDE},
we can easily show that $\{P_{\scT}(\cd)\}_{\scT\ges 0}$ satisfies the same properties in this
proposition.

\ms

(i) For any $T_1\les T_2<\i$, set
$$\bar P(t)\deq P_{\scT_2}(t)-P_{\scT_1}(t),\q \bar\vP(t)\deq \vP_{\scT_2}(t)-\vP_{\scT_1}(t), \q \forall t\in[0,T_1],$$
and define
$$\wh\Th_{i}(t)\deq -[\hR(t)+\hD(t)^{\top}P_{\scT_i}(t)\hD(t)]^{-1}
 [\hB(t)^{\top}\vP_{\scT_i}(t)+\hD(t)^{\top}P_{\scT_i}(t)\hC(t)+\hS(t)],
\q i=1,2.
$$
Then, $\bar\vP(t)$ satisfies the following ODE:
\begin{equation*}
\lt\{\begin{aligned}
&\dot{\bar\vP}+ (\hA+\hB\wh\Th_2)^{\top} \bar\vP + \bar\vP (\hA+\hB\wh\Th_2) + \G=0, \q t\in[0,T_1],\\
&\vP(T_1)\ges 0,
\end{aligned}\rt.\end{equation*}
where
\begin{align*}
\G &\deq \hC^{\top}\bar P\hC - \wh\Th_{2}^{\top}\hB^{\top}\bar\vP - \bar\vP\hB\wh\Th_2
-\wh\Th_2^{\top}(\hR+\hD^{\top}P_{\scT_2}\hD)\wh\Th_2 + \wh\Th_1^{\top}(\hR+\hD^{\top}P_{\scT_1}\hD)\wh\Th_1\\
&=(\hC+\hD\wh\Th_2)^{\top}\bar P(\hC+\hD\wh\Th_2) -(\bar\vP\hB+\hC^{\top}\bar P\hD)\wh\Th_2
-\wh\Th_2^{\top}(\hB^{\top}\bar\vP+\hD^{\top}\bar P\hC)\\
&\hp{=\ } -\wh\Th_2^{\top}\hD^{\top}\bar P\hD\wh\Th_2
-\wh\Th_2^{\top}(\hR+\hD^{\top}P_{\scT_2}\hD)\wh\Th_2 + \wh\Th_1^{\top}(\hR+\hD^{\top}P_{\scT_1}\hD)\wh\Th_1\\
&=(\hC+\hD\wh\Th_2)^{\top}\bar P(\hC+\hD\wh\Th_2)- \wh\Th_2^{\top}(\hR+\hD^{\top}P_{\scT_1}\hD)\wh\Th_1
-\wh\Th_1^{\top}(\hR+\hD^{\top}P_{\scT_1}\hD)\wh\Th_2 \\
&\hp{=\ } + \wh\Th_2^{\top}(\hR+\hD^{\top}P_{\scT_1}\hD)\wh\Th_2 + \wh\Th_1^{\top}(\hR+\hD^{\top}P_{\scT_1}\hD)\wh\Th_1 \\
&=(\hC+\hD\wh\Th_2)^{\top}\bar P(\hC+\hD\wh\Th_2) + (\wh\Th_2-\wh\Th_1)^{\top}(\hR+\hD^{\top}P_{\scT_1}\hD)(\wh\Th_2-\wh\Th_1).
\end{align*}
Obviously, $\G(t)\ges0$ for all $t\in [0,T_1]$, since $P_{\scT}(t)$ is nondecreasing with respect to $T$.
Now, let $\wh\F(\cd)$ be the unique solution of
$$\lt\{\begin{aligned}
d\wh\F(t) &= [\hA(t)+\hB(t)\wh\Th_2(t)]\wh\F(t)dt, \q t\ges 0,\\
 \wh\F(0) &= I_n.\end{aligned}\rt.
$$
We have by the variation of constants formula that for any $t\in[0,T_1]$,
\begin{align*}
\bar\vP(t) &= \[\wh\F(T_1)\wh\F(t)^{-1}\]^{\top}\bar\vP(T_1)\[\wh\F(T_1)\wh\F(t)^{-1}\]\\
&\hp{=\ }+ \int_t^{T_1}\[\wh\F(s)\wh\F(t)^{-1}\]^{\top}\G(s)\[\wh\F(s)\wh\F(t)^{-1}\]ds \ges0.
\end{align*}

(ii) Set $\wh\vP_{\scT}(t)\deq \vP_{\scT+\tau}(t+\t)$. Then $\wh\vP_{\scT}(T)=0$.
Using \rf{DRE:P+vP} and the fact $P_{\scT+\tau}(t+\t)=P_{\scT}(t)$, we obtain
\begin{align*}
-\dot{\wh\vP}_{\scT}(t)&=-\dot\vP_{\scT+\t}(t+\t)\\
&=\wh\vP_{\scT}\hA+\hA^{\top}\wh\vP_{\scT}
+\hC^{\top}P_{\scT}\hC+\hQ \\
&\hp{=\ }-(\wh\vP_{\scT}\hB+\hC^\top P_{\scT}\hD+\hS^\top)
(\hR+\hD^\top P_{\scT}\hD)^{-1}(\hB^\top\wh\vP_{\scT}+\hD^\top P_{\scT}\hC +\hS).
\end{align*}
Thus, $\wh\vP_{\scT}(\cd)$ satisfies the same ODE as $\vP_{\scT}(\cd)$.
By uniqueness, $\wh\vP_{\scT}(\cd)=\vP_{\scT}(\cd)$.

\ms

(iii) In the proof of (i), by replacing the matrix functions $\vP_{\scT_1}(\cd)$ and
$\vP_{\scT_2}(\cd)$ with $\vP_{\scT}(\cd)$ and $\vP(\cd)$, respectively, we get
$$ \vP_{\scT}(t)\les \vP(t), \q \forall 0\les t\les T<\i.$$
Then, by the monotone convergence theorem, the limit
$$
\vP_{\scp\i}(t)\deq \lim_{\scT\to\i}\vP_{\scT}(t)
$$
exists for all $t\in[0,\i)$. Furthermore, we have from (ii) that
$$
\vP_{\scp\i}(t+\t)
=\lim_{\scT\to\i}\vP_{\scT}(t+\t)
=\lim_{\scT\to\i}\vP_{\scT+\tau}(t+\t)
=\vP_{\scp\i}(t),\q \forall t\ges 0,
$$
which implies that $\vP_{\scp\i}(\cd)$ is $\t$-periodic and positive semi-definite.
On the other hand, for any $0\les s \les t \les T$,
\begin{align*}
\vP_{\scT}(s)-\vP_{\scT}(t)=&\int_{s}^{t}\[\vP_{\scT}\hA+\hA^{\top}\vP_{\scT} +
\hC^{\top}P_{\scT}\hC + \hQ - (\vP_{\scT}\hB+\hC^{\top}P_{\scT}\hD+\hS^{\top}) \\
&\q\q\times (\hR+\hD^{\top}P_{\scT}\hD)^{-1}(\hB^{\top}\vP_{\scT}+\hD^{\top}P_{\scT}\hC+\hS)\]dr.	
\end{align*}
Letting $T\to \i$, we obtain by the bounded convergence theorem that
\begin{align*}
\vP_{\scp\i}(s)-\vP_{\scp\i}(t)=&\int_{s}^{t}\[\vP_{\scp\i}\hA+\hA^{\top}\vP_{\scp\i} +
\hC^{\top}P\hC + \hQ - (\vP_{\scp\i}\hB+\hC^{\top}P\hD+\hS^{\top}) \\
&\q\q\times (\hR+\hD^{\top}P\hD)^{-1}(\hB^{\top}\vP_{\scp\i}+\hD^{\top}P\hC+\hS)\]dr,	
\end{align*}
which is exactly the integral version of \rf{RE:Inf-vP}.
By uniqueness of solutions, $\vP_{\scp\i}(t)=\vP(t)$ for all $t\ges0$.
\end{proof}

\begin{theorem}\label{thm:vP-vPT}
Let {\rm\ref{A1}--\ref{A3}} hold. Then there exist constants $K,\l>0$, independent of $T$, such that
$$
\big|\vP_{\scT}(t)-\vP(t)\big| \les Ke^{-\l(T-t)}, \q\forall t\in[0,T].
$$
\end{theorem}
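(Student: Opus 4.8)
The plan is to control $\bar\vP\deq\vP_{\scT}-\vP$ by deriving for it a Riccati-type differential equation whose linear part is governed by the uniformly exponentially stable closed-loop matrix $\hA+\hB\wh\Th$, and then to discard the (favorably signed) quadratic term by a comparison argument, reducing matters to a linear Lyapunov equation driven by the already-controlled difference $\bar P\deq P_{\scT}-P$. I will use three facts: by \autoref{prop:periodi-Ric}, $\wh\Th$ from \rf{STBL:whTh} is a stabilizer of $[\hA(\cd),0\,;\hB(\cd),0]$, so---this system having zero diffusion---the fundamental solution $\wh\F(\cd)$ of $\dot{\wh\F}=(\hA+\hB\wh\Th)\wh\F$ is a deterministic flow obeying $|\wh\F(t)\wh\F(s)^{-1}|\les Ke^{-\mu(t-s)}$ for $t\ges s$, with $K,\mu>0$ independent of $T$; from the proof of \autoref{prop:finite--P+vP} one has $\vP_{\scT}\les\vP$, so $Z\deq\vP-\vP_{\scT}\ges0$; and by \autoref{prop:PT-P}, $|\bar P(s)|\les Ke^{-\l(T-s)}$ on $[0,T]$.

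First I would rewrite the $\vP_{\scT}$-equation in \rf{DRE:P+vP} in closed-loop form relative to the limiting gain $\wh\Th$ (rather than its own gain $\wh\Th_{\scT}$), exactly as in the passage from \rf{Sect3:Proof2} to \rf{Sect3:Proof3}; subtracting the closed-loop form of \rf{RE:Inf-vP} and using the identity
\begin{equation*}
\wh\Th_{\scT}-\wh\Th=-\big(\hR+\hD^\top P_{\scT}\hD\big)^{-1}\big[\hB^\top\bar\vP+\hD^\top\bar P(\hC+\hD\wh\Th)\big],
\end{equation*}
I expect to obtain, for $Z=-\bar\vP$, a Riccati-type equation of the form
\begin{equation*}
\dot Z+Z(\hA+\hB\wh\Th)+(\hA+\hB\wh\Th)^\top Z+Z\hB\big(\hR+\hD^\top P_{\scT}\hD\big)^{-1}\hB^\top Z=\Xi_{\scT},\q Z(T)=\vP(T),
\end{equation*}
where every term of the remainder $\Xi_{\scT}$ carries a factor of $\bar P$: explicitly $\Xi_{\scT}$ collects $(\hC+\hD\wh\Th)^\top\bar P(\hC+\hD\wh\Th)$ together with the cross terms generated by the $\bar P$-part of $\wh\Th_{\scT}-\wh\Th$. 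Since $\hR$ is uniformly positive definite by \ref{A2}, the quadratic term on the left is positive semidefinite.

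The key point is that this quadratic term has a dissipative sign. Because $Z\ges0$, moving it to the right only decreases the source, so by the comparison principle for backward Lyapunov equations (monotone dependence of the solution on its inhomogeneity) $Z$ is dominated by the solution $\ti Z\ges0$ of the linear equation obtained by deleting that term,
\begin{equation*}
\dot{\ti Z}+\ti Z(\hA+\hB\wh\Th)+(\hA+\hB\wh\Th)^\top\ti Z=\Xi_{\scT},\q \ti Z(T)=\vP(T),
\end{equation*}
i.e. $0\les Z\les\ti Z$. Using $|\bar P(s)|\les Ke^{-\l(T-s)}$, the boundedness of $Z$ (from $0\les Z\les\vP$) and of $\vP$, one gets $|\Xi_{\scT}(s)|\les C(1+|Z(s)|)|\bar P(s)|+C|\bar P(s)|^2\les C'e^{-\l(T-s)}$, so that the $Z$-dependence of $\Xi_{\scT}$ is benign. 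The variation-of-constants formula then yields
\begin{equation*}
\ti Z(t)=\big[\wh\F(T)\wh\F(t)^{-1}\big]^\top\vP(T)\big[\wh\F(T)\wh\F(t)^{-1}\big]+\int_t^T\big[\wh\F(s)\wh\F(t)^{-1}\big]^\top\Xi_{\scT}(s)\big[\wh\F(s)\wh\F(t)^{-1}\big]\,ds,
\end{equation*}
whence $|\ti Z(t)|\les Ke^{-2\mu(T-t)}|\vP(T)|+C'K^2\int_t^T e^{-2\mu(s-t)}e^{-\l(T-s)}\,ds\les Ce^{-\nu(T-t)}$ with $\nu\deq\min\{\l,2\mu\}$. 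Finally, since $0\les Z\les\ti Z$ are symmetric positive semidefinite, $\tr(Z^2)\les\tr(\ti Z^2)$ (as $\tr\big((\ti Z-Z)(\ti Z+Z)\big)\ges0$), hence $|\bar\vP(t)|=|Z(t)|\les|\ti Z(t)|\les Ce^{-\nu(T-t)}$, which is the claim.

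The main obstacle is precisely the quadratic term $Z\hB(\hR+\hD^\top P_{\scT}\hD)^{-1}\hB^\top Z$, that is, the quadratic-in-$\bar\vP$ dependence introduced by the gain difference $\wh\Th_{\scT}-\wh\Th$; a crude norm bound on it would make the source quadratic in $\bar\vP$ and force a Gronwall argument with no uniform control of the constant. The device that resolves this is structural rather than quantitative: the a priori sign information $\vP_{\scT}\les\vP$ together with the dissipative sign of the quadratic term lets me discard it by comparison, after which the whole inhomogeneity is proportional to $\bar P$ and hence exponentially small by \autoref{prop:PT-P}.
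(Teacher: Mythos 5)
Your setup is sound up to the decisive step: the closed-loop equation for $Z\deq\vP-\vP_{\scT}\ges0$ relative to the limiting gain $\wh\Th$ is exactly the paper's starting point \rf{EQ:LT}, your identity for $\wh\Th-\wh\Th_{\scT}$ is correct, and the quadratic term does appear as $+Z\hB(\hR+\hD^{\top}P_{\scT}\hD)^{-1}\hB^{\top}Z$ on the left-hand side. The fatal problem is that your comparison goes the wrong way. For a backward equation $\dot W+W\cA_{cl}+\cA_{cl}^{\top}W+\G=0$ with $\cA_{cl}\deq\hA+\hB\wh\Th$, the representation
\begin{equation*}
W(t)=\big[\wh\F_{\scp\Th}(T)\wh\F_{\scp\Th}(t)^{-1}\big]^{\top}W(T)\big[\wh\F_{\scp\Th}(T)\wh\F_{\scp\Th}(t)^{-1}\big]
+\int_t^T\big[\wh\F_{\scp\Th}(s)\wh\F_{\scp\Th}(t)^{-1}\big]^{\top}\G(s)\big[\wh\F_{\scp\Th}(s)\wh\F_{\scp\Th}(t)^{-1}\big]\,ds
\end{equation*}
shows that $W$ is monotone \emph{increasing} in the term $\G$ carried on the left with a plus sign. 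For $Z$ that term is $Z\hB(\hR+\hD^{\top}P_{\scT}\hD)^{-1}\hB^{\top}Z-\Xi_{\scT}$, while for $\ti Z$ it is $-\Xi_{\scT}$; since the quadratic piece is positive semidefinite, this yields $Z\ges\ti Z$, i.e.\ deleting the quadratic term produces a \emph{lower} bound, not the upper bound your argument requires. The quadratic term is an extra nonnegative source that inflates $Z$ as one integrates backward from $T$; it is not dissipative. A scalar check makes this explicit: with $\hA=0$, $\hB=\hR=1$, $\hQ=q>0$, $\hC=\hD=\hS=0$ one has $\vP=\sqrt q$, $\vP_{\scT}(t)=\sqrt q\tanh(\sqrt q\,(T-t))$, hence $Z(t)=\sqrt q\,[1-\tanh(\sqrt q\,(T-t))]$, which strictly exceeds the linearized solution $\ti Z(t)=\sqrt q\,e^{-2\sqrt q\,(T-t)}$ for every $t<T$. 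The sign would be favorable only if you closed the loop with the $T$-dependent gain $\wh\Th_{\scT}$, but then the linear part $\hA+\hB\wh\Th_{\scT}$ has no $T$-uniform exponential stability available at this stage (using \autoref{CORO:Th-whTh} there would be circular, as it is a consequence of this theorem).

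Because the quadratic term cannot be discarded structurally, it has to be controlled quantitatively, and this is precisely where the paper's proof spends its effort. The paper keeps $\G=(\hC+\hD\wh\Th)^{\top}\Si_{\scT}(\hC+\hD\wh\Th)+(\wh\Th-\wh\Th_{\scT})^{\top}(\hR+\hD^{\top}P_{\scT}\hD)(\wh\Th-\wh\Th_{\scT})$ intact, bounds $|\G|\les K_1\big(|\L_{\scT}|^2+|\Si_{\scT}|\big)$, and closes the resulting quadratic integral inequality with the nonlinear Gronwall-type lemma \cite[Lemma 5.5]{Sun-Yong2024:JDE}. The smallness needed to start that lemma is extracted from the monotone convergence $\vP_{\scT}\uparrow\vP$ of \autoref{prop:finite--P+vP}: one chooses $N$ so large that $\L_{\scN}(0)$ is small, splits $[0,T]$ at $k\t$ with $N+k\t\les T<N+(k+1)\t$, and treats the tail $[k\t,T]$ by monotonicity and periodicity. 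Your remaining ingredients (exponential stability of $\hA+\hB\wh\Th$, the bound on $P_{\scT}-P$ from \autoref{prop:PT-P}, the uniform boundedness $0\les Z\les\vP$) are all correct and all appear in the paper's argument, but the comparison step at the heart of your proposal does not survive, and without it your proof does not close.
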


\begin{proof}
Set
$$
\Si_{\scT}(t)\deq P(t)-P_{\scT}(t),\q \L_{\scT}(t)\deq \vP(t)-\vP_{\scT}(t), \q t\in[0,T].
$$
By \autoref{prop:finite--P+vP}, $\Si_{\scT}(t)\ges 0$ and $\L_{\scT}(t)\ges 0$ for all $t\in[0,T]$.
Let $\wh\Th(\cd)$ and $\wh\Th_{\scT}(\cd)$ be defined by \rf{S4:Stabilizer} and \rf{STBL:whTh},
respectively. Then
\begin{equation}\label{EQ:LT}
\dot{\L}_{\scT}(t)+\L_{\scT}(t)[\hA(t)+\hB(t)\wh\Th(t)]+[\hA(t)+\hB(t)\wh\Th(t)]^{\top}\L_{\scT}(t)+\G(t)=0,
\end{equation}
where
$$
\G\deq (\hC+\hD\wh\Th)^{\top}\Si_{\scT}(\hC+\hD\wh\Th)+
(\wh\Th-\wh\Th_{\scT})^{\top}(\hR+\hD^{\top}P_{\scT}\hD)(\wh\Th-\wh\Th_{\scT}).
$$
Observing that
\begin{align*}
\wh\Th-\wh\Th_{\scT}
&=-(\hR+\hD^{\top}P_{\scT}\hD)^{-1}[\hB^{\top}\L_{\scT}+\hD^{\top}\Si_{\scT}(\hC+\hD\wh\Th)],
\end{align*}
and that $\Si_{\scT}(\cd)$, $\L_{\scT}(\cd)$, and $P_{\scT}(\cd)$
are all bounded uniformly in $T$, we can choose a constant $K_1>0$, independent of $T$,
such that
\begin{equation}\label{INEQ:G}
\big|\G(t)\big|\les K_1\[\big|\L_{\scT}(t)\big|^2+\big|\Si_{\scT}(t)\big|\],\q\forall t\in[0,T].
\end{equation}
Since $\wh\Th(\cd)$ is a stabilizer of $[\hA(\cd),0;\hB(\cd),0]$, the ODE system
\begin{equation}\label{SDE:hF_Th}\lt\{\begin{aligned}
d\wh\F_{\scp\Th}(t)&=[\hA(t)+\hB(t)\wh\Th(t)]\wh\F_{\scp\Th}(t)dt, \q t\ges 0, \\
 \wh\F_{\scp\Th}(0)&=I_{n}
\end{aligned}\rt.\end{equation}
is mean-square exponentially stable. By \cite[Corollary 3.4]{Sun-Yong2024:JDE},
there exist constants $K_2,\l_{1}>0$ such that
\begin{equation}\label{INEQ:hF_Th}
\big|\wh\F_{\scp\Th}(t)\wh\F_{\scp\Th}(s)^{-1}\big|^2
\les K_2e^{-\l_1(t-s)},\q\forall 0\les s\les t<\i.
\end{equation}
Additionally, \autoref{prop:PT-P} implies that
\begin{equation}\label{INEQ:Si_T}
\big|\Si_{\scT}(t)\big|\les K_3e^{-2\l_2(T-t)},\q\forall t\in[0,T],
\end{equation}
for some constants $K_3,\l_2>0$ independent of $T$.
According to \autoref{prop:finite--P+vP},
$\L_{\scT}(\cd)$ is nonincreasing with $\lim_{\scT\to\i}\L_{\scT}(t)=0$.
So with $\l\deq\min\{\l_1,\l_2\}$, we can choose a constant $N>\t$ such that
\begin{equation}\label{INEQ:ep}
0<\e\deq K_2\L_{\scN}(0)+\frac{K_1K_2K_3}{\l}e^{-2\l N}\les \frac{\l}{2K_1K_2}.
\end{equation}
Now, let us turn to the estimation of $|\L_{\scT}(\cd)|$. First, if $T<N+\t$,
$$|\L_{\scT}(t)|\les |\L_{t}(t)|=|\vP(t)|,\q t\in [0,T].$$
Let $K_4\deq e^{\l(N+\t)}\max_{t\in[0,\t]}\vP(t)$. Then
\begin{equation}\label{INEQ:LtN}
|\L_{\scT}(t)|\les K_{4}e^{-\l(T-t)},\q\forall t\in[0,T].
\end{equation}
Next, we consider the case of $T\ges N+\t$. Let $T$ be fixed and
$$
k\deq \max\{k; k \text{ is an integer, and } N+k\t\les T\}.
$$
Then it follows from \autoref{prop:finite--P+vP} that
$$0\les \L_{\scT}(k\tau)\les \L_{N+k\tau}(k\tau)=\L_{N}(0).$$
Applying the variation of constants formula to \rf{EQ:LT}, we obtain
\begin{equation*}
\begin{aligned}
\L_{\scT}(t)&=\[\wh\F_{\scp\Th}(k\tau)\wh\F_{\scp\Th}(t)^{-1}\]^{\top}\L_{\scT}(k\tau)
\[\wh\F_{\scp\Th}(k\tau)\wh\F_{\scp\Th}(t)^{-1}\] \\
&\hp{=\ } +\int_{t}^{k\tau}\[\wh\F_{\scp\Th}(s)\wh\F_{\scp\Th}(t)^{-1}\]^{\top}\G(s)
\[\wh\F_{\scp\Th}(s)\wh\F_{\scp\Th}(t)^{-1}\]ds, \q\forall t\les k\t.
\end{aligned}
\end{equation*}
Combining \rf{INEQ:G}, \rf{INEQ:hF_Th}--\rf{INEQ:ep}, we further have
\begin{align*}
|\L_{\scT}(t)|
&\les K_2e^{-\l(k\t-t)}|\L_{\scT}(k\t)| + K_1K_2\int_{t}^{k\t}e^{-\l(s-t)}\[K_3e^{-2\l(T-s)}+|\L_{\scT}(s)|^2\]ds \\
&\les K_2|\L_{N}(0)| e^{-\l(k\t-t)} + \frac{K_1K_2K_3}{\l}e^{-\l(2T-t)}(e^{\l k\t}-e^{\l t}) \\
&\hp{=\ } +K_1K_2\int_{t}^{k\t}e^{-\l(s-t)}|\L_{\scT}(s)|^2ds \\
&\les \varepsilon e^{-\l(k\t-t)} + K_1K_2\int_{t}^{k\t}e^{-\l(s-t)}|\L_{\scT}(s)|^2ds, \q\forall t\in[0,k\t].
\end{align*}
Set $g(t)\deq K_1K_2e^{\l t}|\L_{\scT}(k\tau-t)|$. Then for $t\in[0,k\t]$,
$$
g(t)\les K_1K_2\e + \int_0^t e^{-\l s}g(s)^2ds \les \frac{\l}{2}+\int_{0}^{t}e^{-\l s}g(s)^2ds.
$$
It follows from \cite[Lemma 5.5]{Sun-Yong2024:JDE} that
$$g(t)\les \l,\q\forall t\in[0,k\t].$$
Consequently,
\begin{equation}\label{INEQ:0-kt}\begin{aligned}
|\L_{\scT}(t)|&=\frac{1}{K_1K_2}e^{-\l(k\tau-t)}g(k\tau-t)\les
\frac{\l}{K_1K_2}e^{\l(T-k\tau)}e^{-\l(T-t)}\\
&\les \frac{\l}{K_1K_2}e^{\l(N+\tau)}e^{-\l(T-t)},\q\forall t\in[0,k\t].
\end{aligned}\end{equation}
For $t\in[k\tau, T]$, we have
$$
0 \les \L_{\scT}(t) \les \L_{N+k\t}(t)=\L_{N}(t-k\t) \les \L_{\t}(t-k\t)\les \max_{t\in[0,\t]}\vP(t).
$$
Recalling that $T<N+(k+1)\t$ and $K_4\deq e^{\l(N+\t)}\max_{t\in[0,\t]}\vP(t)$, we obtain
\begin{equation}\label{INEQ:kt-T}
|\L_{\scT}(t)|\les K_4 e^{-\l(N+\t)}\les K_4 e^{-\l(N+(k+1)\t-k\t)}\les K_4 e^{-\l(T-t)},\q\forall t\in[k\t,T].
\end{equation}
Combining \rf{INEQ:LtN}--\rf{INEQ:kt-T} and taking $K\deq\max\{K_4,\frac{\l}{K_1K_2}e^{\l(N+\t)}\}$,
we get the desired result.
\end{proof}

As a consequence of \autoref{prop:PT-P} and \autoref{thm:vP-vPT}, we have the following result.

\begin{corollary}\label{CORO:Th-whTh}
Let {\rm\ref{A1}--\ref{A3}} hold. Let $\Th_{\scT}(\cd)$, $\wh\Th_{\scT}(\cd)$,  $\Th(\cd)$,
and $\wh\Th(\cd)$ be defined by \rf{MFLQ:Th-rep}, \rf{MFLQ:whTH-rep}, \rf{S4:Stabilizer},
and \rf{STBL:whTh}, respectively.
There exist constants $K,\l>0$, independent of $T$, such that
$$
|\Th(t)-\Th_{\scT}(t)|+|\wh\Th(t)-\wh\Th_{\scT}(t)|\les Ke^{-\l(T-t)},\q\forall t\in[0,T].
$$
\end{corollary}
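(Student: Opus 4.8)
The plan is to reduce everything to the two turnpike estimates already established for the Riccati solutions, namely $|P_{\scT}(t)-P(t)|\les Ke^{-\l(T-t)}$ from \autoref{prop:PT-P} and $|\vP_{\scT}(t)-\vP(t)|\les Ke^{-\l(T-t)}$ from \autoref{thm:vP-vPT}, by showing that both $\Th-\Th_{\scT}$ and $\wh\Th-\wh\Th_{\scT}$ are controlled \emph{linearly}, with constants independent of $T$, by $|P-P_{\scT}|$ and $|\vP-\vP_{\scT}|$. Since $\Th_{\scT},\wh\Th_{\scT}$ and $\Th,\wh\Th$ are built from the same algebraic formulas \rf{MFLQ:Th-rep}, \rf{MFLQ:whTH-rep}, \rf{S4:Stabilizer}, \rf{STBL:whTh} with $(P_{\scT},\vP_{\scT})$ replaced by $(P,\vP)$, this is essentially a Lipschitz-continuity argument for the maps $P\mapsto\Th$ and $(P,\vP)\mapsto\wh\Th$.

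First I would record the uniform bounds that make the reduction work. By \ref{A1} every coefficient function is essentially bounded; by the monotonicity in $T$ (the $P$-analogue of \autoref{prop:finite--P+vP}, noted in its proof) together with $P_{\scT},\vP_{\scT}\ges0$, one has $0\les P_{\scT}(t)\les P(t)$ and $0\les\vP_{\scT}(t)\les\vP(t)$, and since $P(\cd),\vP(\cd)$ are $\t$-periodic and continuous they are bounded, so $P_{\scT}(\cd),\vP_{\scT}(\cd)$ are bounded uniformly in $T$. Crucially, by \ref{A2} we have $R(\cd)\ges\d I_m$ and $\hR(\cd)\ges\d I_m$ for some $\d>0$, whence all four weight matrices $R+D^\top PD$, $R+D^\top P_{\scT}D$, $\hR+\hD^\top P\hD$, $\hR+\hD^\top P_{\scT}\hD$ are $\ges\d I_m$ (adding $D^\top(\cdot)D\ges0$) and therefore have inverses bounded in norm by $\d^{-1}$, uniformly in $t$ and $T$.

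For the gap $\Th-\Th_{\scT}$ I would abbreviate $\cR\deq R+D^\top PD$, $\cR_{\scT}\deq R+D^\top P_{\scT}D$ and $\cL\deq B^\top P+D^\top PC+S$, $\cL_{\scT}\deq B^\top P_{\scT}+D^\top P_{\scT}C+S$, so that $\Th=-\cR^{-1}\cL$ and $\Th_{\scT}=-\cR_{\scT}^{-1}\cL_{\scT}$. Writing
$$
\Th-\Th_{\scT}=\cR^{-1}(\cL_{\scT}-\cL)+(\cR_{\scT}^{-1}-\cR^{-1})\cL_{\scT}
$$
and using the resolvent identity $\cR_{\scT}^{-1}-\cR^{-1}=\cR_{\scT}^{-1}(\cR-\cR_{\scT})\cR^{-1}$, I note that $\cL_{\scT}-\cL=B^\top(P_{\scT}-P)+D^\top(P_{\scT}-P)C$ and $\cR-\cR_{\scT}=D^\top(P-P_{\scT})D$ are both linear in $P-P_{\scT}$ with bounded coefficients, while $\cR^{-1},\cR_{\scT}^{-1},\cL_{\scT}$ are uniformly bounded by the preceding paragraph. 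Hence $|\Th(t)-\Th_{\scT}(t)|\les C_1|P(t)-P_{\scT}(t)|$ for a constant $C_1$ independent of $T$, and \autoref{prop:PT-P} closes this half.

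The estimate for $\wh\Th-\wh\Th_{\scT}$ is entirely parallel, with $\hR,\hB,\hC,\hD,\hS$ replacing $R,B,C,D,S$ and the numerator additionally involving $\vP$: setting $\wh\cR\deq\hR+\hD^\top P\hD$ and $\wh\cL\deq\hB^\top\vP+\hD^\top P\hC+\hS$ (and their $T$-versions), the same splitting gives $\wh\cL_{\scT}-\wh\cL=\hB^\top(\vP_{\scT}-\vP)+\hD^\top(P_{\scT}-P)\hC$, which is linear in both differences, so $|\wh\Th(t)-\wh\Th_{\scT}(t)|\les C_2\big(|P(t)-P_{\scT}(t)|+|\vP(t)-\vP_{\scT}(t)|\big)$ with $C_2$ independent of $T$. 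Applying \autoref{prop:PT-P} and \autoref{thm:vP-vPT}, summing the two bounds, and taking a common constant and a common decay rate $\l$ yields the claim. I do not expect any genuine obstacle: the only points requiring care are the uniform lower bound on the weight matrices (so that the matrix inverses remain Lipschitz with a $T$-independent constant) and the uniform boundedness of $P_{\scT},\vP_{\scT}$, both of which are already in hand from \ref{A2} and \autoref{prop:finite--P+vP}; the remainder is the elementary verification that the inverse and the products above depend Lipschitz-continuously on $(P,\vP)$.
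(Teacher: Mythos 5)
Your argument is correct and is exactly the (implicit) reasoning the paper relies on: the corollary is stated as an immediate consequence of \autoref{prop:PT-P} and \autoref{thm:vP-vPT}, and your Lipschitz/resolvent-identity reduction, together with the uniform lower bound on the weight matrices from \ref{A2} and the uniform boundedness of $P_{\scT},\vP_{\scT}$ via monotonicity, is the standard way to make that step explicit. No issues.
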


\section{The periodic mean-field stochastic LQ optimal control problem}\label{Sec:PMFLQ}

In this section, we investigate a periodic mean-field stochastic LQ optimal control problem.
In the next section, we will see that, through $\t$-periodic extension and linear transformation,
the optimal pair of this problem leads to the turnpike limit of Problem (MFLQ)$_{\scT}$.

\ms

Let $P(\cd)$ and $\vP(\cd)$ be the unique $\t$-periodic positive semi-definite solutions
to \rf{RE:Inf-P} and \rf{RE:Inf-vP}, respectively, and let $\Th(\cd)$ and $\wh\Th(\cd)$
be the functions defined in \rf{S4:Stabilizer}, and \rf{STBL:whTh}, respectively.
For notational simplicity, let
\begin{equation}\label{PMFLQ:Notation1}\begin{aligned}
 \cA(\cd)     &\deq  A(\cd)+B(\cd)\Th(\cd),        ~
&\wh\cA(\cd)  &\deq  \hA(\cd)+\hB(\cd)\wh\Th(\cd), ~
&\bar\cA(\cd) &\deq  \wh\cA(\cd)-\cA(\cd), \\
 \cC(\cd)     &\deq  C(\cd)+D(\cd)\Th(\cd),        ~
&\wh\cC(\cd)  &\deq  \hC(\cd)+\hD(\cd)\wh\Th(\cd), ~
&\bar\cC(\cd) &\deq  \wh\cC(\cd)-\cC(\cd).
\end{aligned}\end{equation}
Consider the following controlled mean-field SDE over $[0,\t]$:
\begin{equation}\label{PMFLQ:X1}\left\{\begin{aligned}
d\cX(t) &= \{\cA(t)\cX(t)+\bar\cA(t)\dbE[\cX(t)]+B(t)v(t)+\bB(t)\dbE[v(t)]+b(t)\}dt\\
	    &\hp{=\ } +\{\cC(t)\cX(t)+\bar\cC(t)\dbE[\cX(t)]+D(t)v(t)+\bD(t)\dbE[v(t)]+\si(t)\}dW(t),\\
\mu_{\scp\cX(0)} &= \mu_{\scp\cX(\t)},
\end{aligned}\right.\end{equation}
where $\mu_{\scp\xi}$ denotes the distribution of a random variable $\xi$.

\ms

The following result establishes the well-posedness of the SDE \rf{PMFLQ:X1}.

\begin{proposition}\label{prop:X1-wellposed}
Let {\rm\ref{A1}--\ref{A3}} hold. For any $v(\cd)\in\sU[0,\t]$, the SDE \rf{PMFLQ:X1}
admits a unique (in the distribution sense) square integrable solution.
\end{proposition}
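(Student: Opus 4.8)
The plan is to reduce the distributional periodic boundary value problem \rf{PMFLQ:X1} to a fixed-point problem for the law $\mu_0$ of the initial value $\cX(0)$, and then to solve that fixed-point problem by a contraction argument powered by the two mean-square exponential stability properties supplied by \ref{A3} and \autoref{prop:periodi-Ric}. First I would set up the one-period map. Since the coefficients $\cA(\cd),\bar\cA(\cd),\cC(\cd),\bar\cC(\cd),B(\cd),\bB(\cd),D(\cd),\bD(\cd),b(\cd),\si(\cd)$ are bounded and measurable and the state dependence is affine, for any $\sF_0$-measurable $\xi$ with law $\mu_0\in\cP_2(\dbR^n)$ (realized on $(\Om,\sF_0,\dbP)$, enlarged if necessary so as to carry $\xi$ independent of $W$), the mean-field SDE in \rf{PMFLQ:X1} with initial datum $\cX(0)=\xi$ admits a unique square-integrable strong solution on $[0,\t]$ by the standard theory of McKean--Vlasov equations with Lipschitz coefficients. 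This defines a map $\G:\cP_2(\dbR^n)\to\cP_2(\dbR^n)$ by $\G(\mu_0)\deq\mu_{\scp\cX(\t)}$, which by uniqueness in law depends on $\mu_0$ alone. The constraint $\mu_{\scp\cX(0)}=\mu_{\scp\cX(\t)}$ is then exactly the fixed-point equation $\mu_0=\G(\mu_0)$, so existence and uniqueness of a solution in the distributional sense is equivalent to $\G$ having a unique fixed point.

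\textbf{Contraction estimate.} Given two initial laws $\mu_0,\mu_0'$, I would realize $\xi,\xi'$ as an optimal $\cW_2$-coupling on a common $\sF_0$, drive both copies by the \emph{same} $W$, and set $\d\deq\cX-\cX'$. The inhomogeneous terms $Bv,\bB\dbE[v],b,\si$ cancel, so $\d$ solves the homogeneous mean-field linear SDE
$$ d\d = \big[\cA\d+\bar\cA\,\dbE[\d]\big]dt + \big[\cC\d+\bar\cC\,\dbE[\d]\big]dW. $$
Writing $\d=\ti\d+\bar\d$ with $\bar\d\deq\dbE[\d]$ and using $\cA+\bar\cA=\wh\cA$, $\cC+\bar\cC=\wh\cC$, one checks that $\bar\d$ satisfies the deterministic ODE $\dot{\bar\d}=\wh\cA\bar\d$ while $\ti\d$ satisfies the standard linear SDE $d\ti\d=\cA\ti\d\,dt+(\cC\ti\d+\wh\cC\bar\d)dW$. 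Since $\Th(\cd)$ is a stabilizer of $[A(\cd),C(\cd);B(\cd),D(\cd)]$ and $\wh\Th(\cd)$ a stabilizer of $[\hA(\cd),0;\hB(\cd),0]$, both $[\cA(\cd),\cC(\cd)]$ and $[\wh\cA(\cd),0]$ are mean-square exponentially stable.

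\textbf{Iteration over periods and conclusion.} Because all coefficients are $\t$-periodic, the equation extends $\t$-periodically to $[0,n\t]$, and the $n$-fold iterate $\G^n(\mu_0)$ is precisely $\mu_{\scp\cX(n\t)}$ of the extended equation. Using \cite[Corollary 3.4]{Sun-Yong2024:JDE} for the transition matrix of $[\cA(\cd),\cC(\cd)]$ together with the exponential decay of $\bar\d$, a variation-of-constants estimate yields constants $K,\g>0$ independent of $n$ with $\dbE|\d(n\t)|^2\les Ke^{-\g n\t}\,\dbE|\d(0)|^2$, whence $\cW_2(\G^n\mu_0,\G^n\mu_0')^2\les Ke^{-\g n\t}\,\cW_2(\mu_0,\mu_0')^2$. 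Choosing $n$ so large that $Ke^{-\g n\t}<1$ makes $\G^n$ a strict contraction on the complete metric space $(\cP_2(\dbR^n),\cW_2)$, so by the contraction-mapping principle applied to $\G^n$, the map $\G$ has a unique fixed point $\mu_0^\ast$. The associated solution of \rf{PMFLQ:X1} on $[0,\t]$, unique in law for the initial datum $\mu_0^\ast$, is the desired square-integrable solution, and its law is unique.

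\textbf{Main obstacle.} The delicate point is the contraction estimate: the mean-field coupling prevents $\d(\t)$ from being a simple linear image of $\d(0)$, forcing the split into the deterministic mean $\bar\d$ and the fluctuation $\ti\d$ and the simultaneous use of \emph{both} stability properties. Moreover, a single period need not be contractive, which is why the argument must be carried out over $n$ periods through the $\t$-periodic extension before invoking the fixed-point theorem.
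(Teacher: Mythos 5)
Your proposal is correct and follows the same overall strategy as the paper: reduce \rf{PMFLQ:X1} to a fixed-point problem for the law of $\cX(0)$ on $\cP_2(\dbR^n)$, and obtain the contraction from the mean-square exponential stability of $[\cA(\cd),\cC(\cd)]$ and $[\wh\cA(\cd),0]$ after splitting the difference process into its mean $\bar\d=\dbE[\d]$ (driven by $\wh\cA$) and its fluctuation (driven by $\cA,\cC$ with the $\wh\cC\bar\d$ forcing in the diffusion) --- exactly the decomposition the paper uses. The one place you diverge is the contraction mechanism itself. The paper does not iterate: it invokes \autoref{remark:Sun-Yong2024} to produce $\t$-periodic uniformly positive definite Lyapunov solutions $M(\cd)$ and $N(\cd)$ of the two coupled Lyapunov equations, forms the equivalent Wasserstein-type distance $d_{\scp M(0),N(0)}$, and shows via It\^o's formula and Gronwall that the one-period map already contracts by a factor $e^{-\b\t}$ in that metric. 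You instead keep the plain $\cW_2$ distance, accept that the decay constant $K$ from \cite[Corollary 3.4]{Sun-Yong2024:JDE} may exceed $1$ over a single period, and apply the fixed-point theorem to the $n$-fold iterate $\G^n$ using the $\t$-periodic extension. Both routes are valid and rest on the same stability input; the paper's weighted metric buys a cleaner one-step contraction (and a quantitative rate tied to the Lyapunov bounds), while your iteration is more elementary in that it avoids constructing $M$, $N$ and verifying the equivalence of metrics, at the cost of having to track the cross term between $\bar\d$ and $\ti\d$ in the variation-of-constants estimate over $[0,n\t]$.
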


\begin{proof}
Let $\cP(\dbR^n)$ be the set of probability measures on $(\dbR^n,\sB(\dbR^n))$
having finite second moment.
We equip $\cP(\dbR^n)$ with the $L^2$-Wasserstein distance:
$$
d(\nu_1,\nu_2) \deq \inf\lt\{\sqrt{\dbE|\xi_1-\xi_2|^2}\Bigm|
\xi_i\text{ is a random variable in $\dbR^n$ with } \mu_{\scp\xi_i} = \nu_i; ~i=1,2 \rt\},
$$
where $\mu_{\scp\xi_i}$ denotes the distribution of $\xi_i$.
Thus, $\cP(\dbR^n)$ is a complete metric space.
Note that for any positive definite matrix $M,N>0$, the function $d_{\scp M,N}(\cd\,,\cd)$
defined by
\begin{align*}
d_{\scp M,N}(\nu_1,\nu_2)
&\deq \inf\Big\{\big[\dbE\lan M[\xi-\dbE(\xi)],\xi-\dbE(\xi)\ran + \lan N\dbE(\xi),\dbE(\xi)\ran\big]^{1/2}
\Bigm|\xi=\xi_1-\xi_2,\\
&\hp{=\inf\bigg\{ } \xi_i\text{ is a random variable in $\dbR^n$ with }\mu_{\scp\xi_i}=\nu_i;~i=1,2 \Big\}
\end{align*}
is also a distance on $\cP(\dbR^n)$, which is equivalent to $d(\cd\,,\cd)$.
Denote by $\cX(\cd\,;\nu)$ the solution (which is unique in the distribution sense) of
$$\left\{\begin{aligned}
d\cX(t) &= \{\cA(t)\cX(t)+\bar\cA(t)\dbE[\cX(t)]+B(t)v(t)+\bB(t)\dbE[v(t)]+b(t)\}dt\\
	    &\hp{=\ } +\{\cC(t)\cX(t)+\bar\cC(t)\dbE[\cX(t)]+D(t)v(t)+\bD(t)\dbE[v(t)]+\si(t)\}dW(t),\\
 \mu_{\scp\cX(0)} &= \nu\in\cP(\dbR^n).
\end{aligned}\right.$$
Then we can define a mapping $\cL: \cP(\dbR^{n})\mapsto \cP(\dbR^{n})$ by
$$
\cL(\nu) \deq \mu_{\scp X(\t;\nu)}.
$$
Next, we show that the $\cL$ is a contraction mapping.
To this end, let $\nu_i\in\cP(\dbR^n)$ $(i=1,2)$, and let $\xi_i$ be $\sF_0$-measurable
random variables with $\mu_{\scp\xi_i}=\nu_i$.
Denote by $\cX_i(\cd)$ the solution of
$$\left\{\begin{aligned}
d\cX(t) &= \{\cA(t)\cX(t)+\bar\cA(t)\dbE[\cX(t)]+B(t)v(t)+\bB(t)\dbE[v(t)]+b(t)\}dt\\
	    &\hp{=\ } +\{\cC(t)\cX(t)+\bar\cC(t)\dbE[\cX(t)]+D(t)v(t)+\bD(t)\dbE[v(t)]+\si(t)\}dW(t),\\
 \cX(0) &= \xi_i.
\end{aligned}\right.$$
Then $\wh\cX(\cd)\deq \cX_1(\cd)-\cX_2(\cd)$ satisfies
$$\left\{\begin{aligned}
d\wh\cX(t) &= \big\{\cA(t)\wh\cX(t)+\bar\cA(t)\dbE[\wh\cX(t)]\big\}dt
            + \big\{\cC(t)\wh\cX(t)+\bar\cC(t)\dbE[\wh\cX(t)]\big\}dW(t),\\
 \wh\cX(0) &= \xi_1-\xi_2 \equiv \h\xi.
\end{aligned}\right.$$
Clearly, $\dbE[\wh\cX(\cd)]$ and $\wh\cY(\cd)\deq \wh\cX(\cd)-\dbE[\wh\cX(\cd)]$ satisfies
$$\left\{\begin{aligned}
d\dbE[\wh\cX(t)] &= \wh\cA(t)\dbE[\wh\cX(t)]dt,\\
 \dbE[\wh\cX(0)] &= \dbE[\h\xi],
\end{aligned}\right.$$
and
$$\left\{\begin{aligned}
d\wh\cY(t) &= \cA(t)\wh\cY(t)dt  + \big\{\cC(t)\wh\cY(t)+\wh\cC(t)\dbE[\wh\cX(t)]\big\}dW(t),\\
 \wh\cY(0) &= \h\xi-\dbE[\h\xi],
\end{aligned}\right.$$
respectively. By \autoref{prop:periodi-Ric}, $[\cA(\cd),\cC(\cd)]$ and $[\wh\cA(\cd),0]$
are exponentially mean-square stable.
Thus, according to \autoref{remark:Sun-Yong2024}, there exists a unique $\t$-periodic,
uniformly positive definite function $M(\cd)\in C([0,\i);\dbS_+^n)$ such that
$$
\dot{M}(t)+M(t)\cA(t)+\cA(t)^{\top}M(t)+\cC(t)^{\top}M(t)\cC(t)+I_n=0,
$$
and there exists a unique $\t$-periodic, uniformly positive definite function
$N(\cd)\in C([0,\i);\dbS_+^n)$ such that
$$
\dot{N}(t)+N(t)\wh\cA(t)+\wh\cA(t)^{\top}N(t)+\wh\cC(t)^{\top}M(t)\wh\cC(t)+I_n=0.
$$
By It\^{o}'s rule,
\begin{align*}
{d\over dt}\dbE\lan M(t)\wh\cY(t),\wh\cY(t)\ran
&= -\dbE|\wh\cY(t)|^2 +\lan\wh\cC(t)^{\top}M(t)\wh\cC(t)\dbE[\wh\cX(t)],\dbE[\wh\cX(t)]\ran,\\
{d\over dt}\lan N(t)\dbE[\wh\cX(t)],\dbE[\wh\cX(t)]\ran
&= -|\dbE[\wh\cX(t)]|^2 -\lan\wh\cC(t)^{\top}M(t)\wh\cC(t)\dbE[\wh\cX(t)],\dbE[\wh\cX(t)]\ran.
\end{align*}
Since the continuous functions $M(\cd)$ and $N(\cd)$ are $\t$-periodic and uniformly positive definite,
there exists a constant $\b>0$ such that
$$
M(t),N(t) \les \b^{-1} I_n, \q\forall t\ges0.
$$
Consequently,
\begin{align*}
& {d\over dt}\[\dbE\lan M(t)\wh\cY(t),\wh\cY(t)\ran+\lan N(t)\dbE[\wh\cX(t)],\dbE[\wh\cX(t)]\ran\] \\
&\q= -\[\dbE|\wh\cY(t)|^2 + |\dbE[\wh\cX(t)]|^2\] \\
&\q\les -\b\[\dbE\lan M(t)\wh\cY(t),\wh\cY(t)\ran+\lan N(t)\dbE[\wh\cX(t)],\dbE[\wh\cX(t)]\ran\].
\end{align*}
By Gronwall's inequality,
\begin{align*}
&\dbE\lan M(\t)\wh\cY(\t),\wh\cY(\t)\ran+\lan N(\t)\dbE[\wh\cX(\t)],\dbE[\wh\cX(\t)]\ran \\
&\q\les e^{-\b\t}\[\dbE\lan M(0)\wh\cY(0),\wh\cY(0)\ran+\lan N(0)\dbE[\wh\cX(0)],\dbE[\wh\cX(0)]\ran\].
\end{align*}
Since $M(\t)=M(0)>0$ and $N(\t)=N(0)>0$, the above implies that
\begin{align*}
d_{\scp M(0),N(0)}(\cL(\nu_1),\cL(\nu_2)) \les e^{-\b\t}d_{\scp M(0),N(0)}(\nu_1,\nu_2).
\end{align*}
That is, $\cL$ is a contraction mapping with respect to the distance $d_{\scp M(0),N(0)}(\cd\,,\cd)$.
Therefore, by the fixed-point theorem, the SDE \rf{PMFLQ:X1} admits a unique (in the distribution sense)
square integrable solution.
\end{proof}

Now we introduce the following cost functional:
\begin{align*}
J_\t(v(\cd))
&\deq \dbE\int_{0}^{\t}\bigg\{\lan R(t)v(t),v(t)\ran
      +2\Blan\begin{pmatrix}q_1(t) \\ r_1(t)\end{pmatrix}\!,
             \begin{pmatrix}\cX(t) \\ v(t)  \end{pmatrix}\Bran \\
&\hp{=\ } +\Blan\begin{pmatrix*}[l]\bQ_1(t) & \!\bS_1(t)^\top \\ \bS_1(t) & \!\bR_1(t)\end{pmatrix*}\!
	            \begin{pmatrix}\dbE[\cX(t)] \\ \dbE[v(t)]\end{pmatrix}\!,
	            \begin{pmatrix}\dbE[\cX(t)] \\ \dbE[v(t)]\end{pmatrix}\Bran\bigg\}dt.
\end{align*}
where
\begin{equation}\label{PMFLQ:Notation2}\begin{aligned}
&\bQ_1\deq \wh\cC^{\top}P\wh\cC+\hQ+\wh\Th^{\top}\hR\wh\Th+\wh\Th^{\top}\hS+\hS^{\top}\wh\Th,\q \bR_1\deq \bR+\hD^{\top}P\hD, \\
&\bS_1\deq-\hB^{\top}\vP,\q q_1\deq \wh\cC^{\top}P\si+\wh\Th^{\top}r+q,\q r_1\deq\hD^{\top}P\si+r.
\end{aligned}\end{equation}
We impose the following optimal control problem.

\ms

\no{\bf Problem (MFLQ)$_\t$.}
Find an $v_\t^*(\cd)\in\sU[0,\t]$ such that
$$
J_{\t}(v_\t^*(\cd))=\inf_{v(\cd)\in \cU[0,\t]}J_{\t}(v(\cd)).
$$

For the above Problem (MFLQ)$_\t$, we have the following result.

\begin{proposition}\label{PMFLQ:Propy}
Let {\rm\ref{A1}--\ref{A3}} hold. Then $(\cX_\t^*(\cd),v_\t^*(\cd))$ is an optimal pair of
Problem (MFLQ)$_\t$ if and only if the solution of
\begin{equation}\label{PMFLQ:y}\lt\{\begin{aligned}
&\dot{\cY}(t)+\wh\cA(t)^{\top}\cY(t)+\bQ_1\dbE[\cX_\t^*(t)]+\bS_1^{\top}\dbE[v_\t^*(t)]+q_1(t)=0, \q t\in[0,\t]\\
&\cY(0)=\cY(\t)
\end{aligned}\rt.\end{equation}
satisfies
\begin{equation}\label{PMFLQ:yCd}
\hB(t)^{\top}\cY(t)+\bS_1(t)\dbE[\cX_\t^*(t)]+R(t)v_\t^*(t)+\bR_1(t)\dbE[v_\t^*(t)]+r_1(t)=0,~\as~\ae~t\in[0,\t].
\end{equation}
\end{proposition}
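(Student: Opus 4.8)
The plan is to treat Problem (MFLQ)$_\t$ as a convex quadratic variational problem and to identify \rf{PMFLQ:yCd} as its Euler--Lagrange (first-order) condition, with the adjoint equation \rf{PMFLQ:y} furnishing the costate. The key preliminary observation is that $J_\t(v(\cd))$ depends on the state only through its mean: under the expectation the term $2\lan q_1,\cX\ran$ becomes $2\lan q_1,\dbE[\cX]\ran$, and the genuinely quadratic state contributions involve only $\dbE[\cX]$. Taking expectations in \rf{PMFLQ:X1} shows that $\bar x(\cd)\deq\dbE[\cX(\cd)]$ obeys the deterministic periodic ODE $\dot{\bar x}=\wh\cA\bar x+\hB\dbE[v]+b$ with $\bar x(0)=\bar x(\t)$, uniquely solvable since $[\wh\cA(\cd),0]$ is mean-square exponentially stable by \autoref{prop:periodi-Ric}. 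Because the dynamics are affine in $(\cX,v)$, the map $v\mapsto\cX$ is affine and $J_\t$ is exactly quadratic in $v$, so both the first-order condition and the Taylor expansion below are exact.

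First I would prove necessity. Perturbing an optimal $v_\t^*(\cd)$ to $v_\t^*(\cd)+\e w(\cd)$ for an arbitrary $w(\cd)\in\sU[0,\t]$ and letting $\delta\cX$ be the corresponding homogeneous state variation (so $\cX^\e=\cX_\t^*+\e\,\delta\cX$ exactly), I would compute the derivative of $J_\t$ at $\e=0$. The only nonroutine point is to remove the $\dbE[\delta\cX]$ terms: introducing the deterministic costate $\cY(\cd)$ of \rf{PMFLQ:y} and differentiating $\lan\cY,\dbE[\delta\cX]\ran$, the $\wh\cA$ terms cancel and the periodic boundary data $\cY(0)=\cY(\t)$, together with $\dbE[\delta\cX(0)]=\dbE[\delta\cX(\t)]$ (inherited from $\mu_{\scp\cX(0)}=\mu_{\scp\cX(\t)}$), annihilate the boundary contributions, yielding $\int_0^\t\lan\bQ_1\dbE[\cX_\t^*]+\bS_1^\top\dbE[v_\t^*]+q_1,\dbE[\delta\cX]\ran dt=\int_0^\t\lan\hB^\top\cY,\dbE[w]\ran dt$. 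Substituting this into the derivative gives
\begin{equation*}
\ts\frac12\frac{d}{d\e}\big|_{\e=0}J_\t(v_\t^*+\e w)
=\dbE\int_0^\t\blan \hB^\top\cY+\bS_1\dbE[\cX_\t^*]+Rv_\t^*+\bR_1\dbE[v_\t^*]+r_1,\ w\bran dt,
\end{equation*}
and vanishing of this pairing for every $w(\cd)$ forces \rf{PMFLQ:yCd}.

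For sufficiency I would use convexity. Since $J_\t$ is quadratic, $J_\t(v_\t^*+w)-J_\t(v_\t^*)$ equals the first-order term (which is zero once \rf{PMFLQ:yCd} holds) plus a base-point-independent quadratic form $Q[w]$, so it suffices to show $Q[w]\ges0$. Writing $w=\dbE[w]+(w-\dbE[w])$, the fluctuation produces $\dbE\int_0^\t\lan R(w-\dbE w),w-\dbE w\ran dt\ges0$ because $R(\cd)>0$, while the mean part is a deterministic periodic quadratic form in $\dbE[w]$ and $\bar x^0\deq\dbE[\delta\cX]$. Here the crucial step is the closed-loop form of \rf{RE:Inf-vP}, namely $\dot\vP+\vP\wh\cA+\wh\cA^\top\vP=-\bQ_1$ (obtained by completing the square with $\wh\Th$); integrating $\frac{d}{dt}\lan\vP\bar x^0,\bar x^0\ran$ over $[0,\t]$ and using periodicity and $\bS_1=-\hB^\top\vP$ gives $\int_0^\t(\lan\bQ_1\bar x^0,\bar x^0\ran+2\lan\bS_1\bar x^0,\dbE w\ran)dt=0$. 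Hence $Q[w]=\dbE\int_0^\t\lan R(w-\dbE w),w-\dbE w\ran dt+\int_0^\t\lan(\hR+\hD^\top P\hD)\dbE w,\dbE w\ran dt\ges0$ since $\hR+\hD^\top P\hD>0$, completing the proof.

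I expect the main obstacle to be the bookkeeping forced by the mean-field and distributional periodic structure: one must verify carefully that the state enters $J_\t$ only through $\dbE[\cX]$, that the relevant first moment of the variation inherits $\t$-periodicity, and that the duality pairing with the deterministic costate $\cY(\cd)$ closes with no boundary term. The companion computation---deriving $\dot\vP+\vP\wh\cA+\wh\cA^\top\vP=-\bQ_1$ from \rf{RE:Inf-vP} and thereby collapsing the mean quadratic form to the manifestly nonnegative $\int_0^\t\lan(\hR+\hD^\top P\hD)\dbE w,\dbE w\ran dt$---is the other delicate step.
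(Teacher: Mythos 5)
Your proof is correct, and while its overall skeleton (exact quadratic expansion of $J_\t$, duality with the periodic costate $\cY$ to express the first-order term, nonnegativity of the second-order term) matches the paper's, you handle the two technical pinch points differently. First, you exploit the observation that $J_\t$ sees the state only through $\dbE[\cX(\cd)]$ and work entirely with the deterministic periodic mean ODE $\dot{\bar x}=\wh\cA\bar x+\hB\dbE[v]+b$; the paper instead constructs a joint periodic solution for the pair $\cZ=(\cX_\t^*,\cX^{(v)})$ so that $\cX_\t^*+\e\cX^{(v)}$ is literally the periodic solution for $v_\t^*+\e v$. Your reduction is lighter, but it silently relies on the unique solvability of the periodic mean ODE (which does follow from the exponential stability of $[\wh\cA(\cd),0]$ in \autoref{prop:periodi-Ric}); stating that explicitly would close the only small gap. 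Second, and more substantively, you prove nonnegativity of the quadratic form $Q[w]$ by integrating the closed-loop Lyapunov identity $\dot\vP+\vP\wh\cA+\wh\cA^\top\vP+\bQ_1=0$ against the periodic trajectory $\bar x^0=\dbE[\delta\cX]$, which cancels the $\bQ_1$ and $\bS_1$ contributions and leaves $Q[w]=\dbE\int_0^\t\lan R(w-\dbE[w]),w-\dbE[w]\ran dt+\int_0^\t\lan(\hR+\hD^\top P\hD)\dbE[w],\dbE[w]\ran dt$; the paper instead establishes the pointwise matrix inequality $\bQ_1-\bS_1^\top\hR_1^{-1}\bS_1\ges0$ by a completion-of-squares computation reducing to \ref{A2}. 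The paper's inequality is a stronger, trajectory-independent fact, whereas your identity only holds along solutions of the periodic mean dynamics — but that is exactly where it is needed, and it buys you a manifestly coercive expression for $Q[w]$, from which strict convexity and hence uniqueness of the optimal control also drop out for free.
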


\begin{proof}
$v_\t^*(\cd)$ is optimal if and only if
\begin{equation}\label{24-7-12}
J_{\t}(v_\t^*(\cd)+\e v(\cd))-J_{\t}(v_\t^*(\cd))\ges 0,\q \forall\e\in\dbR,~\forall v(\cd)\in\cU[0,\t].
\end{equation}
Let $\cX^{(v)}(\cd)$ be the solution of
\begin{equation*}\lt\{\begin{aligned}
&d\cX^{(v)}(t)=\[\cA(t)\cX^{(v)}(t)+\bar\cA(t)\dbE[\cX^{(v)}(t)]+B(t)v(t)+\bB(t)\dbE [v(t)]\]dt\\
&\hp{d\cX^{(v)}(t)=} +\[\cC(t)\cX^{(v)}(t)+\bar\cC(t)\dbE[\cX^{(v)}(t)]+D(t)v(t)+\bD(t)\dbE [v(t)]\]dW(t),\\
&\mu_{\scp\cX^{(v)}(0)} = \mu_{\scp\cX^{(v)}(\t)}.
\end{aligned}\rt.\end{equation*}
Applying a similar argument as in the proof of \autoref{prop:X1-wellposed} to the process
$\cZ(\cd)\deq\begin{pmatrix*}\cX_\t^*(\cd)\\ \cX^{(v)}(\cd)\end{pmatrix*}$,
we can choose an appropriate initial value $\cZ(0)$ such that $\cZ(0)$ and $\cZ(\t)$
are identically distributed.
Let $\h\cX(\cd)\deq\cX_\t^*(\cd)+\e\cX^{(v)}(\cd)$.
Then $\h\cX(\t)$ and $\h\cX(0)$ are identically distributed.
As a result, $\h\cX(\cd)$ is the solution to \eqref{PMFLQ:X1} with respect to the control
$\hv(\cd)\deq v_\t^*(\cd)+\e v(\cd)$.
Now, a direct computation shows that
\begin{align*}
&J_{\t}(v_\t^*(\cd)+\e v(\cd))-J_{\t}(v_\t^*(\cd))\\
&\q=\e^2\dbE\int_0^\t\bigg\{\lan Rv,v\ran
    +\Blan\begin{pmatrix*}[l]\bQ_1 & \!\bS_1^\top \\ \bS_1 & \!\bR_1\end{pmatrix*}\!
		  \begin{pmatrix}\dbE[\cX^{(v)}] \\ \dbE[v]\end{pmatrix}\!,
		  \begin{pmatrix}\dbE[\cX^{(v)}] \\ \dbE[v]\end{pmatrix}\Bran\bigg\}dt \\
&\hp{\q=\ } +\e\dbE\int_0^\t\Big\{\lan\bQ_1\dbE[\cX_\t^*]+\bS_1^{\top}\dbE[v_\t^*]+q_1,\cX^{(v)}\ran
            +\lan\bS_1\dbE[\cX_\t^*]+Rv_\t^*+\bR_1\dbE[v_\t^*]+r_1,v\ran\Big\}dt.
\end{align*}
On the other hand, we have by It\^{o}'s rule that
\begin{align*}
0&= \dbE\lan\cY(\t),\cX^{(v)}(\t)\ran-\dbE\lan\cY(0),\cX^{(v)}(0)\ran \\
&= \dbE\int_0^\t\Big\{-\lan\bQ_1\dbE[\cX_\t^*]+\bS_1^{\top}\dbE[v_\t^*]+q_1,\cX^{(v)}\ran+\lan\hB^\top\cY,v\ran\Big\}dt.
\end{align*}
Combining the above equalities, we obtain
\begin{align*}
&J_{\t}(v_\t^*(\cd)+\e v(\cd))-J_{\t}(v_\t^*(\cd))\\
&\q=\e^2\dbE\int_0^\t\bigg\{\lan Rv,v\ran
    +\Blan\begin{pmatrix*}[l]\bQ_1 & \!\bS_1^\top \\ \bS_1 & \!\bR_1\end{pmatrix*}\!
		  \begin{pmatrix}\dbE[\cX^{(v)}] \\ \dbE[v]\end{pmatrix}\!,
		  \begin{pmatrix}\dbE[\cX^{(v)}] \\ \dbE[v]\end{pmatrix}\Bran\bigg\}dt \\
&\hp{\q=\ } +\e\dbE\int_0^\t\Big\{\lan\hB^\top\cY+\bS_1\dbE[\cX_\t^*]+Rv_\t^*+\bR_1\dbE[v_\t^*]+r_1,v\ran\Big\}dt.
\end{align*}
Note that the integral after $\e^2$ is nonnegative.
Indeed, since $\hR_1(\cd)\deq R(\cd)+\bR_1(\cd)$ is uniformly positive definite and
\begin{align*}
&\dbE\bigg\{\lan Rv,v\ran + \Blan
    \begin{pmatrix*}[l]\bQ_1 & \!\bS_1^\top \\ \bS_1 & \!\bR_1\end{pmatrix*}\!
    \begin{pmatrix}\dbE[\cX^{v}] \\ \dbE[v]\end{pmatrix}\!,
    \begin{pmatrix}\dbE[\cX^{v}] \\ \dbE[v]\end{pmatrix}\Bran\bigg\} \\
&\q=\dbE\bigg\{\lan R\big(v-\dbE [v]\big),v-\dbE[v]\ran
    +\Blan\begin{pmatrix*}[l]\bQ_1 & \!\bS_1^\top \\ \bS_1 & \!\hR_1\end{pmatrix*}\!
          \begin{pmatrix}\dbE[\cX^{v}] \\ \dbE[v]\end{pmatrix}\!,
          \begin{pmatrix}\dbE[\cX^{v}] \\ \dbE[v]\end{pmatrix}\Bran\bigg\},
\end{align*}
it suffices to show that
$$
\begin{pmatrix*}[l]\bQ_1(t) & \!\bS_1(t)^\top \\\bS_1(t) & \!\hR_1(t)\end{pmatrix*}\ges 0, \q\ae~ t\ges 0,
$$
or equivalently,
$$
\bQ_1(t)-\bS_1(t)^{\top}\hR_1(t)^{-1}\bS_1(t)\ges 0, \q\ae~ t\ges 0.
$$
For this, let
$$\D(t)\deq -\hR_1(t)^{-1}[\hD(t)^{\top}P(t)\hC(t) +\hS(t)]. $$
Then
\begin{align*}
\bQ_1-\bS_1^{\top}\hR_1^{-1}\bS_1
&= \wh\cC^{\top}P\wh\cC+\hQ+\wh\Th^{\top}\hR\wh\Th+\wh\Th^{\top}\hS
   +\hS^{\top}\wh\Th- \vP\hB\hR_1^{-1}\hB^{\top}\vP \\
&= \hC^{\top}P\hC+\hQ +\wh\Th^{\top}\hR_1\wh\Th+ \wh\Th^{\top}(\hD^{\top}P\hC+\hS)
   +(\hC^{\top}P\hD+\hS^{\top})\wh\Th \\
&\hp{=\ } - (\hR_1\wh\Th+\hD^{\top}P\hC+\hS)^{\top}\hR_1^{-1}(\hR_1\wh\Th+\hD^{\top}P\hC+\hS) \\
&=\hC^{\top}P\hC+\hQ - (\hD^{\top}P\hC+\hS)^{\top}\hR_1^{-1}(\hD^{\top}P\hC+\hS), \\
&= \hC^{\top}P\hC + \hQ + (\hC^{\top}P\hD+\hS^{\top})\D
   +\D^{\top}(\hD^{\top}P\hC+\hS) + \D^{\top}\hR_1\D \\
&= \hC^{\top}P\hC + \hC^{\top}P\hD\D + \D^{\top}\hD^{\top}P\hC + \D^{\top}\hD^{\top}P\hD\D \\
&\hp{=\ } + \hQ + \D^{\top}\hS +\hS^{\top}\D + \D^{\top}\hR\D \\
&= (\hC+\hD\D)^{\top}P(\hC+\hD\D) + (\hR\D+\hS)^{\top}\hR^{-1}(\hR\D+\hS) \\
&\hp{=\ } + \hQ -\hS^{\top}\hR^{-1}\hS \ges 0.
\end{align*}
Thus, \rf{24-7-12} holds if and only if
$$
\dbE\int_0^\t\Big\{\lan\hB^\top\cY+\bS_1\dbE[\cX_\t^*]+Rv_\t^*+\bR_1\dbE[v_\t^*]+r_1,v\ran\Big\}dt=0,
\q\forall v(\cd)\in\cU[0,\t],
$$
or equivalently, if and only if \rf{PMFLQ:yCd} holds.
\end{proof}

The following result provides an explicit expression for the optimal control $v_\t^*(\cd)$.

\begin{proposition}
Let {\rm\ref{A1}--\ref{A3}} hold and $\hR_1(\cd)\deq R(\cd)+\bR_1(\cd)$.
The unique optimal control $v_\t^*(\cd)$ of problem (MFLQ)$_\t$ is given by
\begin{equation}\label{PMFLQ:u_optimal}
v_\t^*(t) = -\hR_1(t)^{-1}\[\hB(t)^\top\eta_\t(t)+r_1(t)\],
\end{equation}
where $\eta_\t(\cd)$ is the solution to
\begin{equation}\label{PMFLQ:eta}\lt\{\begin{aligned}
&\dot{\eta}_\t(t) + \wh\cA(t)^\top\eta_\t(t) + q_1(t) + \vP(t)b(t) =0,\\
&\eta_\t(0) = \eta_\t(\t).
\end{aligned}\rt.\end{equation}
\end{proposition}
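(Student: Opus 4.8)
The plan is to extract the control law directly from the optimality system of \autoref{PMFLQ:Propy} and then absorb the costate $\cY(\cd)$ and the mean $\dbE[\cX_\t^*]$ into the single function $\eta_\t(\cd)$. First I would observe that the equation \rf{PMFLQ:y} for $\cY(\cd)$ is a \emph{deterministic} ODE, since its inhomogeneous terms $\bQ_1\dbE[\cX_\t^*]$, $\bS_1^\top\dbE[v_\t^*]$ and $q_1$ are all deterministic; hence $\cY(\cd)$ is deterministic. In the stationarity relation \rf{PMFLQ:yCd} every term except $R(t)v_\t^*(t)$ is then deterministic, so uniform positive definiteness of $R(\cd)$ forces $v_\t^*(\cd)$ to be deterministic, i.e. $v_\t^*=\dbE[v_\t^*]$. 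Feeding this back into \rf{PMFLQ:yCd} and using $\hR_1=R+\bR_1$ collapses the relation to $v_\t^*=-\hR_1^{-1}\big[\hB^\top\cY+\bS_1\dbE[\cX_\t^*]+r_1\big]$.

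Next I would set $\eta_\t(t)\deq\cY(t)-\vP(t)\dbE[\cX_\t^*(t)]$. Since $\bS_1=-\hB^\top\vP$ by \rf{PMFLQ:Notation2}, we have $\hB^\top\cY+\bS_1\dbE[\cX_\t^*]=\hB^\top\eta_\t$, so the formula above becomes exactly \rf{PMFLQ:u_optimal}. It then remains to check that this $\eta_\t(\cd)$ solves \rf{PMFLQ:eta}. Differentiating and substituting $\dot\cY=-\wh\cA^\top\cY-\bQ_1\dbE[\cX_\t^*]-\bS_1^\top v_\t^*-q_1$ from \rf{PMFLQ:y} and $\tfrac{d}{dt}\dbE[\cX_\t^*]=\wh\cA\dbE[\cX_\t^*]+\hB v_\t^*+b$ (the mean of \rf{PMFLQ:X1}, using $\cA+\bar\cA=\wh\cA$, $B+\bB=\hB$ and $v_\t^*=\dbE[v_\t^*]$), I would collect the $\dbE[\cX_\t^*]$- and $v_\t^*$-coefficients of $\dot\eta_\t+\wh\cA^\top\eta_\t$. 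The control coefficient is $-\bS_1^\top-\vP\hB=0$ because $\bS_1^\top=-\vP\hB$, and the state coefficient vanishes precisely when $\dot\vP+\vP\wh\cA+\wh\cA^\top\vP+\bQ_1=0$; the surviving terms are then $-q_1-\vP b$, which is \rf{PMFLQ:eta}.

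The crux is therefore the algebraic identity that the Riccati equation \rf{RE:Inf-vP} is equivalent to $\dot\vP+\vP\wh\cA+\wh\cA^\top\vP+\bQ_1=0$, with $\wh\cA=\hA+\hB\wh\Th$ and $\bQ_1=\wh\cC^\top P\wh\cC+\hQ+\wh\Th^\top\hR\wh\Th+\wh\Th^\top\hS+\hS^\top\wh\Th$. This is the standard completion of the square: writing $\wh\Th=-\hR_1^{-1}\alpha$ with $\alpha\deq\hB^\top\vP+\hD^\top P\hC+\hS$, the mixed and quadratic $\wh\Th$-terms coming from $\vP\hB\wh\Th$, $\wh\cC^\top P\wh\cC$ and $\wh\Th^\top\hR\wh\Th$ collapse to $-\alpha^\top\hR_1^{-1}\alpha$, exactly the quadratic term of \rf{RE:Inf-vP}. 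This short matrix computation is the one genuinely unavoidable piece of algebra, and I expect it to be the main obstacle to keeping the argument clean.

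Finally, the boundary condition $\eta_\t(0)=\eta_\t(\t)$ follows from $\cY(0)=\cY(\t)$ in \rf{PMFLQ:y}, the $\t$-periodicity of $\vP(\cd)$, and the equality $\mu_{\scp\cX_\t^*(0)}=\mu_{\scp\cX_\t^*(\t)}$ in \rf{PMFLQ:X1} (which gives $\dbE[\cX_\t^*(0)]=\dbE[\cX_\t^*(\t)]$). Existence and uniqueness of the optimal control itself come from strict convexity and coercivity of $J_\t$ (guaranteed by $R(\cd),\hR_1(\cd)>0$ together with the positive semidefiniteness established in the proof of \autoref{PMFLQ:Propy}), while unique solvability of the periodic ODE \rf{PMFLQ:eta} follows from the mean-square exponential stability of $[\wh\cA(\cd),0]$ in \autoref{prop:periodi-Ric}: the deterministic flow of $\wh\cA$ is exponentially stable, so the monodromy operator of the homogeneous adjoint equation has no eigenvalue $1$.
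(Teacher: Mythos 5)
Your proposal is correct and follows essentially the same route as the paper's proof: deduce that $v_\t^*$ is deterministic from the stationarity condition \rf{PMFLQ:yCd}, set $\eta_\t=\cY-\vP\,\dbE[\cX_\t^*]$, use $\bS_1=-\hB^\top\vP$ together with the Lyapunov-form identity $\dot\vP+\vP\wh\cA+\wh\cA^\top\vP+\bQ_1=0$ (which the paper states without proof and you correctly verify by completion of squares from \rf{RE:Inf-vP}), and settle uniqueness via the periodic ODE \rf{PMFLQ:eta} and the stability of $[\wh\cA(\cd),0]$. The only differences are cosmetic — you argue determinism of $v_\t^*$ by noting $\cY$ is deterministic rather than by subtracting the expectation of \rf{PMFLQ:yCd}, and you spell out the monodromy argument that the paper delegates to an external reference.
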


\begin{proof}
Taking expectations on both sides of \rf{PMFLQ:yCd} and then subtracting
the resulting equation from \rf{PMFLQ:yCd}, we obtain
$$
R(t)\(v_\t^*(t)-\dbE[v_\t^*(t)]\)=0, \q\as~\ae~t\in[0,\t],
$$
which implies that $v_\t^*(t)=\dbE[v_\t^*(t)]~\ae~t\in[0,\t]$, since $R(\cd)$ is uniformly positive definite.
Thus, $v_\t^*(\cd)$ is deterministic.
Let $\cY(\cd)$ be the solution of \rf{PMFLQ:y}
and set $\eta_{\t}(t)\deq \cY(t)-\vP(t)\dbE[\cX_\t^*(t)]$. Then noting that
$$
\dot{\vP}(t)+\wh\cA(t)^{\top}\vP(t)+\vP(t)\wh\cA(t)+\bQ_1(t)=0,
$$
we have
\begin{align*}
\dot{\eta}_{\t}(t)
&= \dot{\cY}(t)-\dot{\vP}(t)\dbE[\cX_\t^*(t)]-\vP(t){d\over dt}\dbE[\cX_\t^*(t)]\\
&= -\[\wh\cA(t)^{\top}\cY(t)+\bQ_1(t)\dbE[\cX_\t^*(t)]+\bS_1(t)^{\top}v_\t^*(t)+q_1(t)\]-\dot{\vP}(t)\dbE[\cX_\t^*(t)] \\
&\hp{=\ } -\vP(t)\[\wh\cA(t)\dbE[\cX_\t^*(t)]+\hB(t)v_\t^*(t)+b(t)\] \\
&= -\wh\cA(t)^\top\eta_\t(t)-\[\bS_1(t)^\top+\vP(t)\hB(t)\]v_\t^*(t)-q_1(t) \\
&\hp{=\ } -\[\dot{\vP}(t)+\wh\cA(t)^{\top}\vP(t)+\vP(t)\wh\cA(t)+\bQ_1(t)\]\dbE[\cX_\t^*(t)]-\vP(t)b(t)\\
&= -\wh\cA(t)^\top\eta_\t(t)-q_1(t)-\vP(t)b(t).
\end{align*}
Therefore, $\eta_{\t}(\cd)$ satisfies \rf{PMFLQ:eta}.
In terms of $\eta_{\t}(\cd)$, \rf{PMFLQ:yCd} becomes
\begin{align*}
0&=\hB(t)^{\top}\eta_{\t}(t)+\[\hB(t)^{\top}\vP(t)+\bS_1(t)\]\dbE[\cX_\t^*(t)]+\hR_1(t)v_\t^*(t)+r_1(t)\\
 &=\hB(t)^{\top}\eta_{\t}(t)+\hR_1(t)v_\t^*(t)+r_1(t), \q\ae~t\in[0,\t],
\end{align*}
from which we obtain \rf{PMFLQ:u_optimal}.
Finally, by using a similar argument to \cite[Remark 6.4]{Sun-Yong2024:JDE}, 
\rf{PMFLQ:eta} admits a unique solution.
Thus, the optimal control $v_{\t}^*(\cd)$ is also unique.
\end{proof}

We now extend the optimal pair $(\cX_\t^*(\cd),v_\t^*(\cd))$ of Problem (MFLQ)$_\t$
to $[0,\i)$ periodically as follows.
For any $t\in[0,\i)$, there exist a unique integer $k$ such that $t\in [k\t,(k+1)\t)$.
For such a $t$, we define
\begin{equation}\label{PMFLQ:Def-etau}
\eta(t)\deq \eta_{\t}(t-k\t),\q v^*(t) \deq -\hR_1(t)^{-1}\[\hB(t)^{\top}\eta(t)+r_1(t)\].
\end{equation}
The above two functions are all $\t$-periodic, and $\eta(\cd)$ satisfies the following ODE:
$$
\dot{\eta}(t)+\wh\cA(t)^\top\eta(t)+q_1(t)+\vP(t)b(t)=0.
$$
Similar to \autoref{prop:X1-wellposed}, we can prove that the SDE
\begin{equation}\label{PMFLQ:Xstar}\begin{aligned}
d\cX(t) &= \{\cA(t)\cX(t)+\bar\cA(t)\dbE[\cX(t)]+\hB(t)v^*(t)+b(t)\}dt\\
	    &\hp{=\ } +\{\cC(t)\cX(t)+\bar\cC(t)\dbE[\cX(t)]+\hD(t)v^*(t)+\si(t)\}dW(t)
\end{aligned}\end{equation}
admits a unique (in the distribution sense) $\t$-periodic solution $\cX^*(\cd)$.

\section{The turnpike property}\label{Sec:TP}

In this section we establish the exponential turnpike property
for Problem (MFLQ)$_{\scT}$.
Let $(\bX_{\scT}(\cd),\bu_{\scT}(\cd))$ be the optimal pair of Problem (MFLQ)$_{\scT}$
for the initial state $x$.
Let $v^*(\cd)$ be defined in \rf{PMFLQ:Def-etau},
$\cX^*(\cd)$ the $\t$-periodic solution of \rf{PMFLQ:Xstar}, and
\begin{equation}\label{TP:TPlimit-u*}
u^*(t) \deq \Th(t)\{\cX^*(t)-\dbE[\cX^*(t)]\} + \wh\Th(t)\dbE[\cX^*(t)] + v^*(t).
\end{equation}

\smallskip

We have the following result, which establish the exponential turnpike
property of Problem (MFLQ)$_{\scT}$.

\begin{theorem}\label{TP:MainResult}
Let {\rm\ref{A1}--\ref{A3}} hold. There exist constants $K,\l>0$, independent of $T$, such that
\begin{equation}\label{TP:tpineq}
\dbE\[|\bX_{\scT}(t)-\cX^*(t)|^2+|\bu_{\scT}(t)-u^*(t)|^2\]
\les K\[e^{-\l t}+e^{-\l(T-t)}\], \q\forall t\in[0,T].
\end{equation}
\end{theorem}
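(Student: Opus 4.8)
The plan is to write both the optimal pair and the turnpike pair in closed-loop feedback form, so that their difference solves a mean-field SDE whose homogeneous part is the exponentially stable closed loop governed by $\Th(\cd)$ and $\wh\Th(\cd)$, while every inhomogeneity decays like $e^{-\l(T-t)}$ by \autoref{prop:PT-P}, \autoref{thm:vP-vPT}, and \autoref{CORO:Th-whTh}. By \rf{MFLQ:u-rep}, the optimal control satisfies $\bu_{\scT}-\dbE[\bu_{\scT}]=\Th_{\scT}(\bX_{\scT}-\dbE[\bX_{\scT}])$ and $\dbE[\bu_{\scT}]=\wh\Th_{\scT}\dbE[\bX_{\scT}]+\phi_{\scT}$; substituting into \rf{TP:state} and splitting into centred and mean parts, the optimal state obeys a closed-loop system with drift generators $A+B\Th_{\scT}$ and $\hA+\hB\wh\Th_{\scT}$, offset $\phi_{\scT}$, and the analogous diffusion coefficients. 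The turnpike state $\cX^*$ satisfies \rf{PMFLQ:Xstar}, which under the same splitting is precisely this system with $\Th_{\scT},\wh\Th_{\scT},\phi_{\scT}$ replaced by $\Th,\wh\Th,v^*$.

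A preliminary step is to control the offset error $\phi_{\scT}-v^*$. Comparing \rf{MFLQ:phi-rep} with \rf{PMFLQ:Def-etau}, and noting $\hR(\cd)+\hD(\cd)^\top P(\cd)\hD(\cd)=\hR_1(\cd)$ and $\hD(\cd)^\top P(\cd)\si(\cd)+r(\cd)=r_1(\cd)$, this reduces to estimating $\psi\deq\f_{\scT}-\eta$, where $\f_{\scT}$ solves the backward ODE \rf{MF:bareta-d} and $\eta$ solves \rf{PMFLQ:eta}. One checks that $\psi$ satisfies a backward linear ODE with generator $\wh\cA(\cd)^\top$, terminal value $\psi(T)=-\eta(T)$, and a source which, after substituting $\f_{\scT}=\eta+\psi$, consists of an $O(e^{-\l(T-t)})$ part (from $\eta$, $P_{\scT}-P$, $\vP_{\scT}-\vP$, and $\wh\Th_{\scT}-\wh\Th$) plus a term with an $O(e^{-\l(T-t)})$ coefficient acting on $\psi$ itself. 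Since $\wh\Th$ stabilizes $[\hA(\cd),0;\hB(\cd),0]$, the forward flow $\wh\F_{\scp\Th}$ of $\wh\cA$ decays exponentially by \rf{INEQ:hF_Th}; representing $\psi$ by variation of constants and running a backward Gronwall estimate, the $O(1)$ terminal datum is absorbed by the decaying transition $[\wh\F_{\scp\Th}(T)\wh\F_{\scp\Th}(t)^{-1}]^\top$, giving $|\psi(t)|\les Ke^{-\l(T-t)}$, hence $|\phi_{\scT}(t)-v^*(t)|\les Ke^{-\l(T-t)}$.

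I would then set $Y\deq\bX_{\scT}-\cX^*$ and write the SDE for its centred part $Y^c$ and its mean $\dbE[Y]$. The key device is to expand every error coefficient against $\bX_{\scT}=\cX^*+Y$ and $\dbE[\bX_{\scT}]=\dbE[\cX^*]+\dbE[Y]$: each error term then splits into a genuinely small piece (an $O(e^{-\l(T-t)})$ coefficient times a bounded turnpike quantity) and a piece that is an $O(e^{-\l(T-t)})$ coefficient times $Y^c$ or $\dbE[Y]$. This avoids any a priori bound on $\dbE|\bX_{\scT}|^2$, using only the uniform boundedness of the $\t$-periodic solution $\cX^*$. With the $\t$-periodic, uniformly positive definite Lyapunov matrices $M(\cd),N(\cd)$ from the proof of \autoref{prop:X1-wellposed}, I study $\cV(t)\deq\dbE\lan M(t)Y^c(t),Y^c(t)\ran+\lan N(t)\dbE[Y(t)],\dbE[Y(t)]\ran$. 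Because the closed-loop coefficients of $(Y^c,\dbE[Y])$ coincide with those of $(\wh\cY,\dbE[\wh\cX])$ in that proof, the quadratic terms cancel exactly, giving $\frac{d}{dt}\cV=-\dbE|Y^c|^2-|\dbE[Y]|^2+R(t)$, where $R$ gathers the cross terms with the errors. By Young's inequality, $|R(t)|\les\frac12\big(\dbE|Y^c|^2+|\dbE[Y]|^2\big)+Ke^{-2\l(T-t)}+Ke^{-\l(T-t)}\cV(t)$, so for $T-t$ beyond a fixed threshold one obtains $\dot\cV\les-\gamma\cV+Ke^{-2\l(T-t)}$ with $\gamma>0$.

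Integrating this differential inequality from the $O(1)$ initial energy $\cV(0)=\dbE\lan M(0)Y^c(0),Y^c(0)\ran+\lan N(0)\dbE[Y(0)],\dbE[Y(0)]\ran$ yields $\cV(t)\les K\big(e^{-\gamma t}+e^{-2\l(T-t)}\big)$ on that range, and the fixed-length strip near $t=T$ is absorbed into the $e^{-\l(T-t)}$ term by a crude bound and continuity; uniform positive-definiteness of $M,N$ then gives $\dbE|\bX_{\scT}(t)-\cX^*(t)|^2\les K(e^{-\l t}+e^{-\l(T-t)})$. For the control, I write, using \rf{MFLQ:u-rep} and \rf{TP:TPlimit-u*}, $\bu_{\scT}-u^*=\Th\,Y^c+(\Th_{\scT}-\Th)\bX_{\scT}^c+\wh\Th\,\dbE[Y]+(\wh\Th_{\scT}-\wh\Th)\dbE[\bX_{\scT}]+(\phi_{\scT}-v^*)$, bound the first and third terms by the state estimate, the second and fourth by $Ke^{-2\l(T-t)}$ times the now-established uniform bound on $\dbE|\bX_{\scT}|^2$, and the last by the second paragraph. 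The main obstacle is precisely the interplay of the two time scales: verifying that every inhomogeneous term is genuinely $O(e^{-\l(T-t)})$ (especially the backward $\f_{\scT}-\eta$ estimate, whose $O(1)$ terminal datum must be absorbed by the stable flow) and then closing the Lyapunov--Gronwall argument so that exactly the $e^{-\l t}+e^{-\l(T-t)}$ structure appears, including the endgame on the strip near $t=T$.
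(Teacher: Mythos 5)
Your proposal is correct and follows essentially the same route as the paper: the offset estimate for $\f_{\scT}-\eta$ is exactly \autoref{TP:tp-etavp}, and the state and control estimates use the same closed-loop decomposition, exponential decay of the coefficient errors from \autoref{CORO:Th-whTh}, and a Lyapunov--Gronwall argument. The only cosmetic difference is that you bound the mean and centred parts simultaneously with the combined functional $\cV$ built from both $M$ and $N$, whereas the paper first bounds $\dbE[H_{\scT}]$ by variation of constants along the stable flow $\wh\F_{\scp\Th}$ and then treats the centred part with $M$ alone; both work.
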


In preparation for proving \autoref{TP:MainResult}, we first present the following result.
Recall the functions $\p_{\scT}(\cd)$, $\f_{\scT}(\cd)$, and $\eta(\cd)$ defined by
\rf{MFLQ:phi-rep}, \rf{MF:bareta-d}, and \rf{PMFLQ:Def-etau}, respectively.

\begin{proposition}\label{TP:tp-etavp}
Let {\rm\ref{A1}--\ref{A3}} hold. Then there exist constants $K,\l>0$ independent of $T$, such that
$$
|\eta(t)-\f_{\scT}(t)|+|\p_{\scT}(t)-v^*(t)|\les Ke^{-\l(T-t)},\q\forall t\in[0,T].
$$
\end{proposition}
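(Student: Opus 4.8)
The plan is to bound the two differences separately, reducing each to the turnpike estimates for the Riccati data already in hand (\autoref{prop:PT-P}, \autoref{thm:vP-vPT}, and \autoref{CORO:Th-whTh}). First I would set $\delta_{\scT} \deq \eta - \f_{\scT}$. Since $\eta$ solves $\dot\eta + \wh\cA^\top\eta + q_1 + \vP b = 0$ with $\wh\cA = \hA + \hB\wh\Th$, and $\f_{\scT}$ solves \rf{MF:bareta-d}, subtracting and using $(\hA+\hB\wh\Th)^\top\eta - (\hA+\hB\wh\Th_{\scT})^\top\f_{\scT} = \wh\cA^\top\delta_{\scT} + [\hB(\wh\Th-\wh\Th_{\scT})]^\top\f_{\scT}$ gives the linear ODE
\begin{equation*}
\dot\delta_{\scT} + \wh\cA^\top\delta_{\scT} + g_{\scT} = 0, \qquad \delta_{\scT}(T) = \eta(T),
\end{equation*}
with
\begin{equation*}
g_{\scT} \deq [\hB(\wh\Th-\wh\Th_{\scT})]^\top\f_{\scT} + [\hD(\wh\Th-\wh\Th_{\scT})]^\top P\si + (\hC+\hD\wh\Th_{\scT})^\top(P-P_{\scT})\si + (\wh\Th-\wh\Th_{\scT})^\top r + (\vP-\vP_{\scT})b.
\end{equation*}
The $q$-terms cancel, and all coefficients are bounded uniformly in $T$ by \ref{A1} and the uniform boundedness of $P_{\scT},\vP_{\scT},\wh\Th_{\scT}$, so the three turnpike estimates give $|g_{\scT}(t)| \les Ke^{-\l(T-t)}(1+|\f_{\scT}(t)|)$.

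The crucial preliminary is that $\f_{\scT}$ is bounded uniformly in $T$. I would rewrite \rf{MF:bareta-d} with $\wh\cA^\top$ as principal part, moving $(\wh\cA_{\scT}-\wh\cA)^\top = [\hB(\wh\Th_{\scT}-\wh\Th)]^\top$ into the forcing, and integrate backward from $\f_{\scT}(T)=0$ using the fundamental solution $\wh\F_{\scp\Th}$ of \rf{SDE:hF_Th}, which decays by \rf{INEQ:hF_Th} since $\wh\Th$ stabilizes $[\hA,0;\hB,0]$ (\autoref{prop:periodi-Ric}). Because the inhomogeneous term of \rf{MF:bareta-d} is uniformly bounded, this produces
\begin{equation*}
|\f_{\scT}(t)| \les C_0 + C\int_t^T e^{-\l_1(s-t)/2}e^{-\l(T-s)}|\f_{\scT}(s)|\,ds \les C_0 + C\int_t^T e^{-\l(T-s)}|\f_{\scT}(s)|\,ds,
\end{equation*}
and a backward Gronwall inequality with the $s$-only kernel $Ce^{-\l(T-s)}$, whose integral over $[t,T]$ is at most $C/\l$, yields $|\f_{\scT}(t)| \les C_0e^{C/\l}$ uniformly in $T$ and $t$. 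Hence $|g_{\scT}(t)| \les Ke^{-\l(T-t)}$.

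With $g_{\scT}$ controlled, the variation-of-constants formula gives
\begin{equation*}
\delta_{\scT}(t) = [\wh\F_{\scp\Th}(T)\wh\F_{\scp\Th}(t)^{-1}]^\top\eta(T) + \int_t^T[\wh\F_{\scp\Th}(s)\wh\F_{\scp\Th}(t)^{-1}]^\top g_{\scT}(s)\,ds.
\end{equation*}
Bounding the boundary term by \rf{INEQ:hF_Th} and the $\t$-periodicity (hence boundedness) of $\eta$, and the integrand by \rf{INEQ:hF_Th} and $|g_{\scT}(s)|\les Ke^{-\l(T-s)}$, the convolution $\int_t^T e^{-\l_1(s-t)/2}e^{-\l(T-s)}ds \les Ce^{-\l'(T-t)}$ (taking $\l'<\min\{\l_1/2,\l\}$ to absorb the borderline case) gives $|\eta(t)-\f_{\scT}(t)| \les Ke^{-\l(T-t)}$. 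For the control difference, I would subtract the explicit formulas \rf{MFLQ:phi-rep} and \rf{PMFLQ:Def-etau}: since $\hR+\hD^\top P_{\scT}\hD$ and $\hR_1=\hR+\hD^\top P\hD$ are both bounded below by the uniformly positive definite $\hR$, their inverses differ by $O(|P-P_{\scT}|)$, so $|\p_{\scT}-v^*|$ is dominated by $|\hB^\top(\f_{\scT}-\eta)| + C|P-P_{\scT}|$, which is $\les Ke^{-\l(T-t)}$ by the first estimate and \autoref{prop:PT-P}.

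The main obstacle is the $T$-dependence of both $\f_{\scT}$ and the feedback matrix $\wh\cA_{\scT}=\hA+\hB\wh\Th_{\scT}$: near $t=T$ the latter is not close to the stable $\wh\cA$, so one cannot invoke a uniform stability estimate for $\wh\cA_{\scT}$ directly. Anchoring the analysis on the genuinely stable infinite-horizon matrix $\wh\cA$ and treating $\wh\cA_{\scT}-\wh\cA$ (which decays like $e^{-\l(T-t)}$ by \autoref{CORO:Th-whTh}) as a perturbation, with the uniform bound on $\f_{\scT}$ secured by the backward Gronwall step, is what makes the argument close.
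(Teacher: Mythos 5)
Your proposal is correct and follows essentially the same route as the paper: write the equation for $\eta-\f_{\scT}$ with the stable operator $\wh\cA^\top$ as principal part, treat $\wh\cA_{\scT}-\wh\cA$ and the other Riccati differences as forcing terms decaying like $e^{-\l(T-t)}$ via \autoref{prop:PT-P}, \autoref{thm:vP-vPT}, and \autoref{CORO:Th-whTh}, and close with variation of constants, the stability bound \rf{INEQ:hF_Th}, and Gronwall. The only (immaterial) difference is that you dispose of the self-referencing term by first proving a uniform bound on $\f_{\scT}$, whereas the paper keeps $(\wh\cA_{\scT}-\wh\cA)^\top h_{\scT}$ in the forcing and runs the Gronwall argument directly on $h_{\scT}$ using the boundedness of the periodic $\eta$; your explicit treatment of the $\p_{\scT}-v^*$ estimate, which the paper leaves implicit, is also fine.
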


\begin{proof}
Recall the notation \rf{PMFLQ:Notation1} and let
\begin{equation}\label{NOTE:AChAhC}\begin{aligned}
&\cA_{\scT}(t) \deq A(t)+B(t)\Th_{\scT}(t), \q
 \cC_{\scT}(t) \deq C(t)+D(t)\Th_{\scT}(t), \\
&\wh\cA_{\scT}(t) \deq \hA(t)+\hB(t)\wh\Th_{\scT}(t), \q
 \wh\cC_{\scT}(t) \deq \hC(t)+\hD(t)\wh\Th_{\scT}(t).
\end{aligned}\end{equation}
where $\Th_{\scT}(\cd)$ and $\wh\Th_{\scT}(\cd)$ are defined by
\rf{MFLQ:Th-rep} and \rf{MFLQ:whTH-rep}, respectively.
Then $h_{\scT}(\cd)\deq \eta(\cd)-\f_{\scT}(\cd)$ satisfies $h_{\scT}(T)=\eta(T)$, and
\begin{align*}
0&= \dot{h}_{\scT}(t)+\wh\cA_{\scT}(t)^{\top}h_{\scT}(t)+\[\wh\cA(t)-\wh\cA_{\scT}(t)\]^\top\eta(t)
    +\[P(t)\wh\cC(t)-P_{\scT}(t)\wh\cC_{\scT}(t)\]^{\top}\si(t) \\
&\hp{=\ } +\[\wh\Th(t)-\wh\Th_{\scT}(t)\]^{\top}r(t)+\[\vP(t)-\vP_{\scT}(t)\]b(t) \\
&= \dot{h}_{\scT}(t) + \wh\cA(t)^{\top}h_{\scT}(t) + l_{\scT}(t),
\end{align*}
where
$$
l_{\scT} \deq (\wh\cA_{\scT}-\wh\cA)^{\top}h_{\scT} + (\wh\cA-\wh\cA_{\scT})^{\top}\eta
+(P\cC-P_{\scT}\wh\cC_{\scT})^{\top}\si + (\wh\Th-\wh\Th_{\scT})^{\top}r(t) + (\vP-\vP_{\scT})b.
$$
Recall that $\wh\F_{\scp\Th}(\cd)$ is the solution to \eqref{SDE:hF_Th}.
Then
\begin{equation}\label{TP:CtVnFa}
h_{\scT}(t) = \[\wh\F_{\scp\Th}(T)\wh\F_{\scp\Th}(t)^{-1}\]^{\top}\eta(T)
+\int_{t}^{\scT}\[\wh\F_{\scp\Th}(s)\wh\F_{\scp\Th}(t)^{-1}\]^{\top}l_{\scT}(s)ds.
\end{equation}
By \autoref{prop:PT-P}, \autoref{thm:vP-vPT}, \autoref{CORO:Th-whTh} and \rf{INEQ:hF_Th},
there exist constants $K,\l>0$, independent of $T$, such that for any $0\les t\les s\les T$,
\begin{align*} &|\wh\cA(t)-\wh\cA_{\scT}(t)|+|P(t)\wh\cC(t)-P_{\scT}(t)\wh\cC_{\scT}(t)|+|\wh\Th(t)-\wh\Th_{\scT}(t)|+|\vP(t)-\vP_{\scT}(t)|\les Ke^{-\l(T-t)},\\
&|\wh\F_{\scp\Th}(s)\wh\F_{\scp\Th}(t)^{-1}|\les Ke^{-\l(s-t)}.
\end{align*}
Observing that $\eta(\cd)$ is bounded, \rf{TP:CtVnFa} implies that
\begin{align*}
|h_{\scT}(t)|&\les Ke^{-\l(T-t)}+K\int_{t}^{\scT}e^{-\l(s-t)}|l_{\scT}(s)|ds\\
&\les Ke^{-\l(T-t)}+K\int_{t}^{\scT}e^{-\l(s-t)}e^{-\l(T-s)}\[|h_{\scT}(s)|+1\]ds\\
&= Ke^{-\l(T-t)}+Ke^{-\l(T-t)}\int_{t}^{T}\[|h_{\scT}(s)|+1\]ds,
\end{align*}
where the constant $K$ is independent of $T$ and may vary from line to line. Now, set
$$
\beta_{\scT}(t)=|h_{\scT}(T-t)|e^{\l t}.
$$
Then the above can be written as
$$
\beta_{\scT}(t)\les K(1+t)+K\int_{0}^{t}e^{-\l s}\beta_{\scT}(s)ds.
$$
Applying Gronwall's inequality, we have
$$
\beta_{\scT}(t)\les K(1+t)\les \frac{2K}{\l}e^{\frac{\l}{2}t},
$$
for possibly different positive constant $K$. As a result, we obtain
$$
|h_{\scT}(t)|\les \frac{2K}{\l}e^{-\frac{\l}{2}(T-t)}, \q\forall t\in[0,T].
$$
This completes the proof.
\end{proof}

\begin{proof}[Proof of \autoref{TP:MainResult}]
Recall the notation introduced in \rf{PMFLQ:Notation1} and \rf{NOTE:AChAhC}, as well as the SDE given by \rf{PMFLQ:Xstar}.
It is easy to check that $\cX^*(t)$ satisfies
\begin{align*}
d\cX^*(t)&=\big\{\cA(t)\big(\cX^*(t)-\dbE[\cX^*(t)]\big)+\wh\cA(t)\dbE[\cX^*(t)]+\hB(t)v^*(t)+b(t)\big\}dt\\
&\hp{=\ }+\big\{\cC(t)\big(\cX^*(t)-\dbE[ \cX^*(t)]\big)+\wh\cC(t)\dbE[\cX^*(t)]+\hD(t)v^*(t)+\si(t)\big\}dW(t).
\end{align*}
Also, by substituting \rf{MFLQ:u-rep} into \rf{TP:state}, we obtain
\begin{equation*}\lt\{\begin{aligned} d\bX_{\scT}(t)&=\big\{\cA_{\scT}(t)\big(\bX_{\scT}(t)-\dbE[\bX_{\scT}(t)]\big)
+\wh\cA_{\scT}(t)\dbE[\bX_{\scT}(t)]+\hB(t)\p_{\scT}(t)+b(t)\big\}dt\\
&\hp{=\ }+\big\{\cC_{\scT}(t)\big(\bX_{\scT}(t)-\dbE[\bX_{\scT}(t)]\big)
+\wh\cC_{\scT}(t)\dbE[\bX_{\scT}(t)]+\hD(t)\p_{\scT}(t)+\si(t)\big\}dW(t),\\
\bX_{\scT}(0)&=x.
\end{aligned}\rt.\end{equation*}
Now, set
\begin{align*}
H_{\scT}(t)\deq \bX_{\scT}(t)-\cX^*(t),\q \cX^*_0(t)\deq \cX^*(t)-\dbE [\cX^*(t)], \q t\in[0,T].
\end{align*}
Then $H_{\scT}(0)=x-\cX^*(0)$ and
\begin{equation}\label{TP:H-formula}\begin{aligned}
dH_{\scT}(t)&=\Big\{\cA_{\scT}(t)\big(H_{\scT}(t)-\dbE[H_{\scT}(t)]\big)
+[\cA_{\scT}(t)-\cA(t)]\cX^*_0(t)+
\wh\cA_{\scT}(t)\dbE [H_{\scT}(t)]\\
&\hp{=\ }+ [\wh\cA_{\scT}(t)-\wh\cA(t)]\dbE[\cX^*(t)]+\hB(t)[\p_{\scT}(t)-v^*(t)]\Big\}dt\\
&\hp{=\ }+\Big\{\cC_{\scT}(t)\big(H_{\scT}(t)-\dbE[ H_{\scT}(t)]\big)+[\cC_{\scT}(t)-\cC(t)]\cX^*_0(t)+\wh\cC_{\scT}(t)\dbE [H_{\scT}(t)]\\
&\hp{=\ }+ [\wh\cC_{\scT}(t)-\wh\cC(t)]\dbE[\cX^*(t)]+\hD(t)[\p_{\scT}(t)-v^*(t)]\Big\}dW(t).
\end{aligned}\end{equation}
Taking expectation each side, we have $\dbE [H_{\scT}(0)]=x-\dbE[\cX^*(0)]$ and
\begin{equation}\label{TP:EH-formula}
d\dbE[H_{\scT}(t)]=\lt\{\wh\cA_{\scT}(t)\dbE[ H_{\scT}(t)]+ [\wh\cA_{\scT}(t)-\wh\cA(t)]\dbE[\cX^*(t)]+\hB(t)[\p_{\scT}(t)-v^*(t)]\rt\}dt.
\end{equation}
Subtracting \rf{TP:H-formula} from \rf{TP:EH-formula}, it follows that
\begin{equation}\label{TP:H-EH-formula}\begin{aligned}
d\big(H_{\scT}(t)-\dbE [H_{\scT}(t)]\big)&=\Big\{\cA_{\scT}(t)\big(H_{\scT}(t)-\dbE [H_{\scT}(t)]\big)+[\cA_{\scT}(t)-\cA(t)]\cX^*_0(t)\Big\}dt\\
&\hp{=\ }+\Big\{\cC_{\scT}(t)\big(H_{\scT}(t)-\dbE [H_{\scT}(t)]\big)+[\cC_{\scT}(t)-\cC(t)]\cX^*_0(t)+k(t)\Big\}dW(t),
\end{aligned}\end{equation}
where
\begin{align*}
	k(t)\deq \wh\cC_{\scT}(t)\dbE [H_{\scT}(t)]+[\wh\cC_{\scT}(t)-\wh\cC(t)]\dbE[\cX^*(t)]+\hD(t)[\p_{\scT}(t)-v^*(t)].
\end{align*}

In what follows, we shall estimate $|\dbE [H_{\scT}(t)]|$ and $|k(t)|$ first.
Note that \rf{TP:EH-formula} can be written as
\begin{align*}
\frac{d\dbE [H_{\scT}(t)]}{dt}&=\wh\cA(t)\dbE [H_{\scT}(t)]+[\wh\cA_{\scT}(t)-\wh\cA(t)]\dbE [H_{\scT}(t)]\\
&\hp{=\ }+ [\wh\cA_{\scT}(t)-\wh\cA(t)]\dbE[\cX^*(t)]+\hB(t)[\p_{\scT}(t)-v^*(t)].
\end{align*}
Recall that $\wh\F_{\scp\Th}(\cd)$ is the solution to \rf{SDE:hF_Th}.
Then by \autoref{CORO:Th-whTh}, \autoref{TP:tp-etavp} and \rf{INEQ:hF_Th},
for any $0\les s\les t\les T$, there exist constants $K,\l>0$, independent of $T$, such that
\begin{align*}
& |\wh\cA_{\scT}(t)-\wh\cA(t)|+ |\p_{\scT}(t)-v^*(t)|\les Ke^{-\l(T-t)}, \\
& |\wh\F_{\scp\Th}(t)\wh\F_{\scp\Th}(s)^{-1}|\les Ke^{-\l(t-s)}.\nn
\end{align*}
Further, since $\cX^*(\cd)$ is $\t$-periodic, then $|\dbE[\cX^*(\cd)]|$ is bounded.
Applying the variation of constants formula, it follows that
\begin{align*}
|\dbE[H_{\scT}(t)]|&\les |\wh\F_{\scp\Th}(t)|\cd|x-\dbE[\cX^*(0)]|+\int_{0}^{t}|\wh\F_{\scp\Th}(t)\wh\F_{\scp\Th}(s)^{-1}|\[|\wh\cA_{\scT}(s)-\wh\cA(s)|\cd|\dbE [H_{\scT}(s)]|\\
&\hp{\les\ } + |\wh\cA_{\scT}(s)-\wh\cA(s)|\cd|\dbE[\cX^*(s)]|+|\hB(s)|\cd|\p_{\scT}(s)-v^*(s)|\]ds\\
&\les Ke^{-\l t}+K\int_{0}^{t}e^{-\l(t-s)}e^{\l(T-s)}\[|\dbE [H_{\scT}(s)]|+1\]ds\\
&\les K\[e^{-\l t}+e^{-\l(T-t)}\]+K\int_{0}^{t}e^{-\l(T-s)}|\dbE [H_{\scT}(s)]|ds,
\end{align*}
for possibly different positive constants $K$ and $\l$.
For convenience, hereafter we shall use $K$ and $\l$ to denote two generic positive constants which do not depend on $T$ and may vary from line to line.
Now, by Gronwall's inequality, we obtain
\begin{equation}\label{TP:Re4}
|\dbE [H_{\scT}(t)]|\les K\[e^{-\l t}+e^{-\l(T-t)}\].
\end{equation}
As a result,
\begin{align}\label{TP:ES-kt}
|k(t)|&\les \[|\wh\cC_{\scT}(t)-\wh\cC(t)|+|\wh\cC(t)|\]|\dbE [H_{\scT}(t)]|+|\wh\cC_{\scT}(t)-\wh\cC(t)|\cd|\dbE[\cX^*(t)]| \nn\\
&\hp{\les\ }+|\hD(t)|\cd|\p_{\scT}(t)-v^*(t)|\les K\[e^{-\l t}+e^{-\l(T-t)}\].
\end{align}

\ms
	
Next, we turn to estimate $\dbE|H_{\scT}(t)-\dbE [H_{\scT}(t)]|^2$.
\autoref{prop:periodi-Ric} implies that $[\cA(\cd),\cC(\cd)]$ is mean-square exponentially stable.
Thus, according to \autoref{remark:Sun-Yong2024}, for a given positive constant $\d$,
there exists a unique $\t$-periodic, uniformly positive definite function
$M(\cd)\in C(0,\i;\dbS^{n}_{+})$ satisfying
$$
\dot{M}(t)+M(t)\cA(t)+\cA(t)^{\top}M(t)+\cC(t)^{\top}M(t)\cC(t)+2\d I_{n}=0, \q\forall t\ges 0.
$$
Further, there exist some constants $\alpha,\beta>0$, such that
\begin{equation}\label{TP:ES-M}
\a^{-1} I_{n}\les M(t)\les \beta^{-1} I_{n},\q \forall t\ges 0.
\end{equation}
Now, using the It\^{o}'s formula, \rf{TP:H-EH-formula} yields
\begin{align}\label{TP:PH}
&\dbE\big\lan M(t)\big(H_{\scT}(t)-\dbE [H_{\scT}(t)]\big),H_{\scT}(t)-\dbE [H_{\scT}(t)]\big\ran-\dbE\big\lan M(0)\cX^*_0(0),\cX_0^*(0)\big\ran \nn\\
&\q=\dbE\int_{0}^{t}\bigg\{\big\lan\dot{M}\big(H_{\scT}-\dbE[ H_{\scT}]\big),H_{\scT}-\dbE H_{\scT}\big\ran+\big\lan Mk,k\big\ran \nn\\
&\hp{\q=\ }+2\big\lan M\big(H_{\scT}-\dbE [H_{\scT}]\big),\cA_{\scT}\big(H_{\scT}-\dbE [H_{\scT}]\big)+[\cA_{\scT}-\cA]\cX^*_{0}\big\ran \nn\\
&\hp{\q=\ }+\big\lan M\big[\cC_{\scT}\big(H_{\scT}-\dbE[ H_{\scT}]\big)+(\cC_{\scT}-\cC)\cX^*_0\big],\cC_{\scT}\big(H_{\scT}
-\dbE [H_{\scT}]\big)+(\cC_{\scT}-\cC)\cX^*_0\big\ran\bigg\}ds \nn\\
&\q=\dbE\int_{0}^{t}\bigg\{\big\lan \big(\dot{M}+M\cA_{\scT}+\cA_{\scT}^{\top}M+\cC_{\scT}^{\top}M\cC_{\scT}\big)\big(H_{\scT}-\dbE [H_{\scT}]\big),H_{\scT}-\dbE [H_{\scT}]\big\ran \nn \\
&\hp{\q=\ }+2\big\lan H_{\scT}-\dbE [H_{\scT}],[M(\cA_{\scT}-\cA)+\cC_{\scT}^{\top}M(\cC_{\scT}-\cC)]\cX^*_0\ran\nn\\
&\hp{\q=\ }+\big\lan M(\cC_{\scT}-\cC)\cX^*_0,(\cC_{\scT}-\cC)\cX^*_0\big\ran+\big\lan Mk,k\big\ran\bigg\}ds.
\end{align}
\autoref{CORO:Th-whTh} implies that there exist constants $K,\l>0$, independent of $T$, such that
$$|\cA_{\scT}(t)-\cA(t)|+|\cC_{\scT}(t)-\cC(t)|\les Ke^{-\l(T-t)}.$$
Then it follows from \rf{TP:ES-M} that
\begin{align*}
&\dot{M}(t)+M(t)\cA_{\scT}(t)+\cA_{\scT}(t)^{\top}M(t)+\cC_{\scT}(t)^{\top}M(t)\cC_{\scT}(t)\\
&\q=\dot{M}(t)+M(t)\cA(t)+\cA(t)^{\top}M(t)+\cC(t)^{\top}M(t)\cC(t)+M(t)[\cA_{\scT}(t)-\cA(t)]\\
&\hp{\q=\ }+ [\cA_{\scT}(t)-\cA(t)]^{\top}M(t)+[\cC_{\scT}(t)-\cC(t)]^{\top}M(t)\cC(t)+\cC_{\scT}(t)^{\top}M(t)[\cC_{\scT}(t)-\cC(t)]\\
&\q\les \[-2\d+Ke^{-\l (T-t)}\]I_n,
\end{align*}
 which yields
\begin{align}\label{TP:Re1}
&\dbE\big\lan \big(\dot{M}(t)+M(t)\cA_{\scT}(t)+\cA_{\scT}(t)^{\top}M(t) \nn\\
&\q\q+\cC_{\scT}(t)^{\top}M(t)\cC_{\scT}(t)\big)\big(H_{\scT}(t)-\dbE
[H_{\scT}(t)]\big), H_{\scT}(t)-\dbE [H_{\scT}(t)]\big\ran\nn \\
&\q\les \[-2\d+Ke^{-\l(T-t)}\]\dbE|H_{\scT}(t)-\dbE [H_{\scT}(t)]|^2.
\end{align}
Note that $\dbE|\cX^*_0(\cd)|^2$ is bounded. Then by the Cauchy-Schwarz inequality, we obtain
\begin{align}\label{TP:Re2}
&2\dbE\big\lan H_{\scT}(t)-\dbE [H_{\scT}(t)],\big(M(t)[\cA_{\scT}(t)-\cA(t)]+\cC_{\scT}(t)^{\top}M(t)[\cC_{\scT}(t)-\cC(t)]\big)\cX^*_0(t)\big\ran\nn\\
&\q\les \d^{-1}\big|M(t)[\cA_{\scT}(t)-\cA(t)]+ \cC_{\scT}(t)^{\top}M(t)[\cC_{\scT}(t)-\cC(t)]\big|^2\dbE|\cX^*_0(t)|^2 \nn\\
&\hp{\q\les\ }+ \d\dbE|H_{\scT}(t)-\dbE [H_{\scT}(t)]|^2 \nn\\
&\q \les \d\dbE|H_{\scT}(t)-\dbE [H_{\scT}(t)]|^2+Ke^{-\l(T-t)}.
\end{align}
Moreover,
\begin{equation}\label{TP:Re3}
\dbE\big\lan M(t)[\cC_{\scT}(t)-\cC(t)]\cX^*_0(t),[\cC_{\scT}(t)-\cC(t)]\cX^*_0(t)\big\ran\les Ke^{-\l(T-t)},
\end{equation}
and
\begin{equation}\label{TP:Re5}
\big\lan M(t)k(t),k(t)\big\ran \les K\[e^{-\l t}+e^{-\l(T-t)}\].
\end{equation}
Now, set
$$G(t)\deq \dbE\big\lan M(t)\big(H_{\scT}(t)-\dbE [H_{\scT}(t)]\big),H_{\scT}(t)-\dbE [H_{\scT}(t)]\big\ran, \q t\in[0,T].$$
Combining \rf{TP:Re1}--\rf{TP:Re5} and noting \rf{TP:ES-M}, it follows from \rf{TP:PH} that
\begin{align*}
{dG(t) \over dt}&\les \[-\d+Ke^{-\l(T-t)}\]\dbE|H_{\scT}(t)-\dbE [H_{\scT}(t)]|^2+K\[e^{-\l t}+e^{-\l(T-t)}\] \\
&\les \[-\d\b+\a Ke^{-\l(T-t)}\]G(t)+K\[e^{-\l t}+e^{-\l(T-t)}\].
\end{align*}
Using the Gronwall's inequality and recalling \rf{TP:Re4}, we have
\begin{align*}
\dbE|\bX_{\scT}(t)-\cX^*(t)|^2&=|\dbE [H_{\scT}(t)]|^2+ \dbE|H_{\scT}(t)-\dbE [H_{\scT}(t)]|^2  \\
&\les |\dbE [H_{\scT}(t)]|^2+\a G(t) \les K \[e^{-\l t}+e^{-\l(T-t)}\],\q\forall t\in[0,T].
\end{align*}

Finally, since
\begin{align*}
\bu_{\scT}(t)-u^*(t)&=\Th_{\scT}(t)\[\bX_{\scT}(t)-\cX^*(t)\]+\[\wh\Th_{\scT}(t)-\Th_{\scT}(t)\]\dbE\[\bX_{\scT}(t)-\cX^*(t)\]\\
&\hp{=\ }+ \[\Th_{\scT}(t)-\Th(t)\]\cX^*_0(t)+\[\wh\Th_{\scT}(t)-\wh\Th(t)\]\dbE [\cX^*(t)]+\[\p_{\scT}(t)-v^*(t)\],
\end{align*}
we get the desired result \rf{TP:tpineq} immediately.
\end{proof}

In terms of the $L^2$-Wasserstein distance, the exponential turnpike property of Problem (MFLQ)$_{\scT}$ 
in \autoref{TP:MainResult} can be restated in the following manner. 

\begin{corollary}
Let {\rm\ref{A1}--\ref{A3}} hold. Let $\mu^*(t)$, $\nu^*(t)$, $\bar\mu_{\scT}(t)$ and $\bar\nu_{\scT}(t)$ denote the distributions of $\cX^*(t)$, $u^*(t)$, $\bX_{\scT}(t)$ and $\bu_{\scT}(t)$, respectively.
Then there exist constants $K,\l>0$, independent of $T$, such that
$$
d(\mu^*(t),\bar\mu_{\scT}(t)) + d(\nu^*(t),\bar\nu_{\scT}(t))
\les K\[e^{-\l t}+e^{-\l(T-t)}\],\q\forall t\in[0,T].
$$
\end{corollary}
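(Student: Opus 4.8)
The plan is to exploit the variational characterisation of the $L^2$-Wasserstein distance as an infimum over couplings, and then to reduce the statement to the trajectory-wise estimate already obtained in \autoref{TP:MainResult}. The key observation is that $\cX^*(\cd)$ and $\bX_{\scT}(\cd)$ are realised on the \emph{same} filtered probability space $(\Om,\sF,\dbP)$, driven by the \emph{same} Brownian motion $W$; hence, for each fixed $t$, the pair $\big(\cX^*(t),\bX_{\scT}(t)\big)$ is a particular coupling of the marginals $\mu^*(t)$ and $\bar\mu_{\scT}(t)$. Taking this specific coupling in the definition of $d(\cd\,,\cd)$ yields the one-sided bound
$$
d\big(\mu^*(t),\bar\mu_{\scT}(t)\big) \les \Big(\dbE\big|\cX^*(t)-\bX_{\scT}(t)\big|^2\Big)^{1/2},
$$
and likewise $d\big(\nu^*(t),\bar\nu_{\scT}(t)\big)\les\big(\dbE|u^*(t)-\bu_{\scT}(t)|^2\big)^{1/2}$ for the control marginals, since $u^*(\cd)$ and $\bu_{\scT}(\cd)$ are built from the same state processes through the feedback representations \rf{TP:TPlimit-u*} and \rf{MFLQ:u-rep}.

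Next I would insert the trajectory estimate \rf{TP:tpineq} of \autoref{TP:MainResult}, which furnishes constants $K,\l>0$, independent of $T$, with
$$
\dbE\big|\bX_{\scT}(t)-\cX^*(t)\big|^2 + \dbE\big|\bu_{\scT}(t)-u^*(t)\big|^2 \les K\big[e^{-\l t}+e^{-\l(T-t)}\big],\q\forall t\in[0,T].
$$
Combining this with the two coupling bounds and using the elementary inequality $\sqrt{a+b}\les\sqrt a+\sqrt b$ for $a,b\ges0$, one obtains
$$
d\big(\mu^*(t),\bar\mu_{\scT}(t)\big)+d\big(\nu^*(t),\bar\nu_{\scT}(t)\big)
\les 2\sqrt{K}\,\big(e^{-\l t/2}+e^{-\l(T-t)/2}\big),\q\forall t\in[0,T].
$$
Relabelling $K\mapsto 2\sqrt K$ and $\l\mapsto\l/2$ gives precisely the claimed bound, with new constants that again do not depend on $T$.

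Given \autoref{TP:MainResult}, the argument is essentially immediate, so there is no substantial obstacle; the only point requiring care is the coupling step. One must check that a single coupling is admissible simultaneously for the state marginals and for the control marginals, which holds automatically because $\cX^*$, $\bX_{\scT}$, $u^*$, and $\bu_{\scT}$ are all constructed on the common space $(\Om,\sF,\dbP)$. I emphasise that only the upper bound on $d(\cd\,,\cd)$ is needed here, so neither optimal-transport duality nor the existence of an optimal coupling has to be invoked; exhibiting the one natural coupling suffices.
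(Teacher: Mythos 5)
Your proposal is correct and is exactly the argument the paper intends: the corollary is stated without proof as an immediate restatement of \autoref{TP:MainResult}, relying on the fact that the pair $(\cX^*(t),\bX_{\scT}(t))$ (and likewise the control pair) lives on the common probability space and thus furnishes an admissible coupling, so that $d(\mu^*(t),\bar\mu_{\scT}(t))\les(\dbE|\cX^*(t)-\bX_{\scT}(t)|^2)^{1/2}$, after which \rf{TP:tpineq} and $\sqrt{a+b}\les\sqrt{a}+\sqrt{b}$ give the bound with $\l$ halved. Nothing further is needed.
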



\begin{thebibliography}{90}
\addtolength{\itemsep}{-1.0ex}


\bibitem{Basei-Pham2019}
\rm M. Basei and H. Pham,
\it A weak martingale approach to linear-quadratic McKean-Vlasov stochastic control problems,
\rm J. Optim. Theory Appl., 181 (2019), pp. 347--382.


\bibitem{Breiten-Pfeiffer2020}
\rm T. Breiten and L. Pfeiffer,
\it On the turnpike property and the receding-horizon method for linear-quadratic optimal control problems,
\rm SIAM J. Control Optim., 58 (2020), pp. 1077--1102.


\bibitem{Carlson-Haurie-Leizarowitz1991}
\rm D. A. Carlson, A. B. Haurie, and A. Leizarowitz,
\it Infinite Horizon Optimal Control --- Deterministic and Stochastic Systems, 2nd ed.,
\rm Springer-Verlag, Berlin, 1991.



\bibitem{Damm-Grune-Stieler-Worthmann2014}
\rm T. Damm, L. Gr\"{u}ne, M. Stieler, and K. Worthmann,
\it An exponential turnpike theorem for dissipative discrete time optimal control problems,
\rm SIAM J. Control Optim., 52 (2014), pp. 1935--1957.



\bibitem{Dragon-etl2021}
\rm V. Dr\u{a}gan, E. F. Costa, I. L. Popa, and S. Aberkane,
\it Exact detectability: Application to generalized Lyapunov and Riccati equations,
\rm Syst. Control Lett., 157 (2021), Paper No. 105032.

\bibitem{Dorfman-Samuelson-Solow1958}
\rm R. Dorfman, P. A. Samuelson, and R. M. Solow,
\it Linear Programming and Economics Analysis,
\rm McGraw-Hill, New York (1958).


\bibitem{Faulwasser-Grune2022}
\rm T. Faulwasser and L. Gr\"{u}ne,
\it Turnpike properties in optimal control: An overview of discrete-time and continuous-time results,
\rm Numer. Control: Part A, 23 (2022), pp. 367--400.


\bibitem{Grune-Guglielmi2018}
\rm L. Gr\"{u}ne and R. Guglielmi,
\it Turnpike properties and strict dissipativity for discrete time linear quadratic optimal control problems,
\rm SIAM J. Control Optim., 56 (2018), pp. 1282--1302.



\bibitem{Huang-Li-Yong2015}
\rm J. Huang, X. Li, and J. Yong,
\it A linear-quadratic optimal control problem for mean-field stochastic differential equations in infinite horizon,
\rm Math. Control \& Relat. Fields, 5 (2015), pp. 97--139.


\bibitem{Li-Li-Yu2020}
\rm N. Li, X. Li, and Z. Yu,
\it Indefinite mean-field type linear-quadratic stochastic optimal control problems,
\rm Automatica, 122 (2020), Paper No. 109267.



\bibitem{Lou-Wang2019}
\rm H. Lou and W. Wang,
\it Turnpike properties of optimal relaxed control problems,
\rm ESAIM Control Optim. Calc. Var., 25 (2019), Paper No. 74.


\bibitem{Lu2020}
\rm Q. L\"{u},
\it Stochastic linear quadratic optimal control problems for mean-field stochastic evolution equation,
\rm ESAIM Control Optim. Calc. Var., 26 (2020), Paper No. 127.


\bibitem{Neumann1945}
\rm J. von Neumann,
\it A model of general economic equilibrium,
\rm Rev. Econ. Stud., 13 (1945), pp. 1--9.


\bibitem{Ni-Elliott-Li2015}
\rm Y. Ni, R. Elliott, and X. Li,
\it Discrete-time mean-field stochastic linear–quadratic optimal control problems, II: Infinite horizon case,
\rm Automatica, 57 (2015), pp. 65--77.


\bibitem{Ni-Li-Zhang2015}
\rm Y. Ni, X. Li, and J. Zhang,
\it Indefinite mean-field stochastic linear-quadratic optimal control: From finite horizon to infinite horizon,
\rm IEEE Trans. Automat. Control, 61 (2015), pp. 3269--3284.



\bibitem{Porretta-Zuazua2013}
\rm A. Porretta and E. Zuazua,
\it Long time versus steady state optimal control,
\rm SIAM J. Control Optim., 51 (2013), pp. 4242--4273.


\bibitem{Ramsey1928}
\rm F. P. Ramsey,
\it A mathematical theory of saving,
\rm Econ. J., 38 (1928), pp. 543--559.


\bibitem{Sakamoto-Zuazua2021}
\rm N. Sakamoto and E. Zuazua,
\it The turnpike property in nonlinear optimal control --- A geometric approach,
\rm Automatica, 134 (2021), Paper No. 109939.



\bibitem{Sun2017}
\rm J. Sun,
\it Mean-field stochastic linear quadratic optimal control problems: Open-loop solvabilities,
\rm ESAIM Control Optim. Calc. Var., 23 (2017), pp. 1099--1127.


\bibitem{Sun-Wang-Yong2022}
\rm J. Sun, H. Wang, and J. Yong,
\it Turnpike properties for stochastic linear-quadratic optimal control problems,
\rm Chin. Ann. Math. Ser. B, 43 (2022), pp. 999--1022.




\bibitem{Sun-Yong2024:SICON}
\rm J. Sun and J. Yong,
\it Turnpike properties for mean-field linear-quadratic optimal control problems,
\rm SIAM J. Control Optim., 62 (2024), pp. 752--775.

\bibitem{Sun-Yong2024:JDE}
\rm J. Sun and J. Yong,
\it Turnpike properties for stochastic linear-quadratic optimal control problems with periodic coefficients,
\rm J. Differential Equations, 400 (2024), pp. 189--229.

\bibitem{Trelat-Zuazua2015}
\rm E. Tr\'{e}lat and E. Zuazua,
\it The turnpike property in finite-dimensional nonlinear optimal control,
\rm J. Differential Equations, 258 (2015), pp. 81--114.


\bibitem{Wang-Zhang2021}
\rm B. Wang and H. Zhang,
\it Indefinite linear quadratic mean field social control problems with multiplicative noise,
\rm IEEE Trans. Automat. Control, 66 (2021), pp. 5221--5236.

\bibitem{Yong2013}
\rm J. Yong,
\it Linear-quadratic optimal control problems for mean-field stochastic differential equations,
\rm SIAM J. Control Optim., 51 (2013), pp. 2809--2838.



\bibitem{Yang-Tang-Meng2022}
\rm Y. Yang, M. Tang, and Q. Meng,
\it A mean-field stochastic linear-quadratic optimal control problem with jumps under partial information,
\rm ESAIM Control Optim. Calc. Var., 28 (2022), Paper No. 53.




\bibitem{Zaslavski2019}
\rm A. J. Zaslavski,
\it Turnpike Conditions in Infinite Dimensional Optimal Control,
\rm Springer Optim. Appl., 148, Springer, Cham, 2019.


\bibitem{Zuazua2017}
\rm E. Zuazua,
\it Large time control and turnpike properties for wave equations,
\rm Annu. Rev. Control, 44 (2017), pp. 199--210.

\end{thebibliography}
\end{document}